\numberwithin{equation}{section} 
\numberwithin{figure}{section} 
\theoremstyle{plain}
\theoremstyle{plain}
\newtheorem{thm}{Theorem}
  \theoremstyle{plain}
  \newtheorem{prop}[thm]{Proposition}
  \theoremstyle{remark}
  \newtheorem{rem}[thm]{Remark}
  \theoremstyle{plain}
  \newtheorem{lem}[thm]{Lemma}
\theoremstyle{plain}
  \newtheorem{cor}[thm]{Corollary}
  \newtheorem{fact}[thm]{Fact}
  \newcounter{casectr}
  \newenvironment{caseenv}
  {\begin{list}{{\itshape\ Case} \arabic{casectr}.}{%
   \setlength{\leftmargin}{\labelwidth}
   \addtolength{\leftmargin}{\parskip}
   \setlength{\itemindent}{\listparindent}
   \setlength{\itemsep}{\medskipamount}
   \setlength{\topsep}{\itemsep}}
   \setcounter{casectr}{0}
   \usecounter{casectr}}
  {\end{list}}
\title[Bilipschitz invariants for holomorphic foliations]{Bilipschitz invariants for germs of holomorphic foliations}
\author{Rudy Rosas}
\email{rudy.rosas@pucp.pe}
\email{rudy@imca.edu.pe}
\address{Pontificia Universidad Cat\'olica del Per\'u, Av Universitaria 1801,  San Miguel, Lima, Per\'u.}
\address{Instituto de Matem\'atica y Ciencias Afines, calle Los Bi\'ologos 245, Lima 12, Per\'u.}
\begin{document}

\maketitle
\begin{abstract}In this paper we study bilipschitz equivalences
of germs of holomorphic foliations in $(\mathbb{C}^2,0)$. We prove  that the algebraic multiplicity of a singularity is invariant by such equivalences. Moreover, for a large class of singularities,  we show that the projective holonomy representation is also a bilipschitz invariant.

\end{abstract}

\section{Introduction} \footnotetext{This work was partially supported by the Vicerectorado de Investigacion de la Pontificia Universidad Cat\'olica del Per\'u.}
Given a reduced holomorphic curve $f:(\mathbb{C}^2,0)\rightarrow (\mathbb{C},0)$, singular at
$0\in\mathbb{C}^2$, we define its \emph{algebraic multiplicity} as the
degree of the first   nonzero jet of $f$, that is, $\nu(f)=\nu$
where $$f=f_{\nu}+f_{\nu+1}+\cdots$$ is the Taylor development of
$f$ and $f_{\nu}\neq 0$. A well known result by Burau \cite{burau}
and Zariski \cite{zariski} states that $\nu$ is a
\emph{topological invariant}, that is, given
$\widetilde{f}:(\mathbb{C}^2,0)\rightarrow(\mathbb{C},0)$ reduced and a homeomorphism
$\mathfrak{h}:\mathfrak{U}\rightarrow\widetilde{\mathfrak{U}}$ between neighborhoods of $0\in\mathbb{C}^2$
such that $\mathfrak{h}(f^{-1}(0)\cap {\mathfrak{U}})=\widetilde{f}^{-1}(0)\cap \widetilde{\mathfrak{U}}$ then
$\nu(f)=\nu(\widetilde{f})$. This question in dimension greater than 2 is the celebrated Zariski's Multiplicity Problem, posed in \cite{zariski problem} (see \cite{eyral} for a survey). It is well known that the topological equivalence between the curves $f$ and $\tilde{f}$  implies the topological equivalence between the foliations induced by $df$ and $d\tilde{f}$ on a neighborhood of $0\in\mathbb{C}^2$ (see \cite{king}). Thus it is natural to extend the Zariski's Multiplicity Problem for holomorphic foliations in $(\mathbb{C}^2,0)$, as was posed by J.F.Mattei. Consider a holomorphic vector
field $Z$ in $\mathbb{C}^2$ with a singularity at $0\in\mathbb{C}^2$. If
$$Z=Z_{\nu}+Z_{\nu+1}+\cdots,\quad Z_{\nu}\neq 0$$ we define $\nu=\nu(Z)$ as the
\emph{algebraic multiplicity} of $Z$ at $0\in\mathbb{C}^2$. The vector field $Z$ defines
a holomorphic foliation by curves $\mathcal{F}$ with  isolated
singularity in a neighborhood of $0\in\mathbb{C}^2$ and the algebraic
multiplicity $\nu(Z)$ depends only on the foliation $\mathcal{F}$. Then:
Is $\nu(\mathcal{F})$
a topological invariant of $\mathcal{F}$?.  In \cite{cls}, the
authors give a positive answer if $\mathcal{F}$ is a
\emph{generalized curve}, that is, if the desingularization of
$\mathcal{F}$ does not contain complex saddle-nodes. If
$\mathcal{F}$ is 1-\emph{dicritical}, that is, after a blow up the
exceptional divisor is not invariant by the strict transform of
$\mathcal{F}$, the conjecture is also true: in this case, it is
not difficult to show that the algebraic multiplicity of
$\mathcal{F}$   is equal to the index of $\mathcal{F}$ (as defined
in \cite{cls}) along a generic separatrix. Then the topological
invariance of the algebraic multiplicity of a dicritical
singularity is a consequence of the topological invariance of the
index along a curve, which is proved in \cite{cls}.    Recently has been considered the problem for some classes of equivalences. In \cite{rosas2} is proved the invariance of the algebraic multiplicity under equivalences that are differentiable at the singular point and in \cite{rosas1} is solved the problem for 1-foliations in any dimension but only for  $C^1$ equivalences.

The following theorem is the first result of this work.
   \begin{thm}\label{main result}
   Let $\mathcal{F}$ and $\widetilde{\mathcal{F}}$ be holomorphic foliations by curves on neighborhoods $\mathfrak{U}$ and $\widetilde{\mathfrak{U}}$  of   $0\in\mathbb{C}^2$, respectively. Suppose that  $\mathcal{F}$ and $\widetilde{\mathcal{F}}$ have isolated singularities at $0\in\mathbb{C}^2$. Let $\mathfrak{h}:\mathfrak{U}\rightarrow\widetilde{\mathfrak{U}}$ be a topological equivalence between $\mathcal{F}$ and $\widetilde{\mathcal{F}}$. Assume  that $\mathfrak{h}$ is \textbf{bilipzchitz}, that is, there are positive constants $m,M$ such that $$m|z_1-z_2|\le|\mathfrak{h}(z_1)-\mathfrak{h}(z_2)|\le M|z_1-z_2|$$ for all $z_1,z_2\in {\mathfrak{U}}$. Then the algebraic multiplicity of $\mathcal{F}$ and $\widetilde{\mathcal{F}}$ at $0\in\mathbb{C}^2$ are the same.
   \end{thm}
It is worth mentioning that the  bilipschitz hypothesis is also used in \cite{risler trotman} to give a positive answer for the  Zariski's Multiplicity Problem.

Another object associated to a non-1-dicritical foliation\footnote{That is, after a single blow up the exceptional divisor is invariant by the foliation. } is its projective holonomy representation.  Cerveau
and Sad in \cite{CS} pose the following problem: Assuming  $\mathcal{F}$
is a non-1-dicritical generalized curve, it is true that the projective holonomy groups
of $\mathcal{F}$ and $\mathcal{F}'$ are topologically conjugated?
Also in \cite{CS} the authors give a positive answer for a generic
class of foliations $\mathcal{F}$ and assuming that $h$ is a topologically
trivial deformation. Stronger results in relation to this subject are obtained in  \cite{marin}, \cite{ORV}, \cite{MM2}, and \cite{rosas3}. We must remark the work of Mar\'{\i}n and Mattei (\cite{MM2}), who prove the topological invariance of the projective holonomy for a generic class of generalized curves, although the problem is  still unsettled  if we allow saddle node singularities after resolution. Thus, as a second result of this work we prove that the projective holonomy representation is a bilipschitz invariant for a large class of singularities (allowing saddle nodes after resolution). We say that a germ of holomorphic foliation at $\mathbb{C}^2$ belongs to the class $\mathfrak{G}$ if after a blow up at $0\in\mathbb{C}^2$ the exceptional divisor $E$ is invariant by the strict transform of the foliation and each  singularity at $E$ has some separatrix other than $E$.
\begin{thm}\label{second result}
Let $\mathcal{F}\in\mathfrak{G}$ and let  $\widetilde{\mathcal{F}}$ be a foliation topologically equivalent to $\mathcal{F}$ by a bilipschitz homeomorphism. Then $\widetilde{\mathcal{F}}\in\mathfrak{G}$ and the projective holonomy representations of $\mathcal{F}$ and $\widetilde{\mathcal{F}}$ are topologically conjugated.
\begin{rem}\label{aaa}
 Denote also by  $\mathcal{F}$ and $\widetilde{\mathcal{F}}$ the strict transforms of $\mathcal{F}$ and $\widetilde{\mathcal{F}}$ after a single blow up and let $E$   and $\widetilde{E}$ be the corresponding  exceptional divisors.  Let $E_*=E\backslash \mbox{Sing}(\mathcal{F})$ and $\widetilde{E}_*=\widetilde{E}\backslash \mbox{Sing}(\widetilde{\mathcal{F}})$. The conjugacy of the holonomy representations requires an isomorphism between the fundamental groups of $E_*$ and $\widetilde{E}_*$. In Theorem \ref{second result} this isomorphism is induced by a homeomorphism $\phi$ between  $E_*$ and $\widetilde{E}_*$ as follows. Let $\mathfrak{h}$ be the bilipschitz equivalence between $\mathcal{F}$ and $\widetilde{\mathcal{F}}$. Let $p_1,\ldots,p_k$ be the singularities of $\mathcal{F}$ in $E$. Let $S_1,\ldots,S_k$ be irreducible separatrices through $p_1,\ldots,p_k$ respectively all different from $E$. Let $\widetilde{S}_j=h(S_j)$ and let $\tilde{p}_j$ be the point where $\widetilde{S}_j$ meets the exceptional divisor $\widetilde{E}$. Clearly $\mathfrak{h}$ is a topological conjugacy between the singular curves $S=S_1\cup\ldots\cup S_k$ and $\widetilde{S}=\widetilde{S}_1\cup\ldots\cup\widetilde{S}_k$. Then by the main result of \cite{MM5} we have that $\mathfrak{h}$ is homotopic to another conjugacy  $\mathfrak{h}'$ which extends homeomorphically to $E$. On the other hand, in \ref{pt2} is proved that $\widetilde{\mathcal{F}}$ does not have singularities in $\widetilde{E}$ other than the $\tilde{p}_j$.  Therefore  $\phi:=\mathfrak{h}'|_{E_*}$ gives the desired homeomorphism.\end{rem}
\end{thm}
 \section{Itinerary and some remarks}\label{itinerary}

   Given foliations $\mathcal{F}$ and $\widetilde{\mathcal{F}}$ with isolated singularities at $0\in\mathbb{C}^2$, we say that $\mathcal{F}$ and $\widetilde{\mathcal{F}}$ are \emph{topologically equivalent} (at $0\in\mathbb{C}^2$) if there is an orientation preserving homeomorphism $\mathfrak{h}:{\mathfrak{U}}\rightarrow\tilde{{\mathfrak{U}}}$, $\mathfrak{h}(0)=0$  between neighborhoods of $0\in\mathbb{C}^2$,  taking leaves of  $\mathcal{F}$ to leaves of $\widetilde{\mathcal{F}}$. Such a homeomorphism is a \emph{topological equivalence} between $\mathcal{F}$ and $\widetilde{\mathcal{F}}$.
   Let $\pi:\widehat{\mathbb{C}^2}\rightarrow \mathbb{C}^2$ be the quadratic blow up at $0\in\mathbb{C}^2$, let $E=\pi^{-1}(0)$ be the exceptional divisor and let $\mathcal{F}_*$ and $\widetilde{\mathcal{F}}_*$  be the strict transforms of $\mathcal{F}$ and $\widetilde{\mathcal{F}}$ by $\pi$. We will always assume that $\mathcal{F}$ (and therefore $\widetilde{\mathcal{F}}$) is non-1-dicritical, that is, the exceptional divisor $E$ is invariant by  $\mathcal{F}_*$. We know that $\mathfrak{h}$ lifts to a homeomorphism \[
{h}={\pi}^{-1}\mathfrak{h}\pi:{\pi}^{-1}({\mathfrak{U}})\backslash E\rightarrow {\pi}^{-1}(\widetilde{{\mathfrak{U}}})\backslash E\]
 which takes leaves of ${\mathcal{F}}_*$ to leaves of $\widetilde{\mathcal{F}}_*$
and such that ${h}(w)\rightarrow E$ as $w\rightarrow E$.
Conversely, if $W$ and $\widetilde{W}$ are neighborhoods of $E$
 and $\bar{h}:W\backslash E\mapsto \widetilde{W}\backslash E$ is
a homeomorphism taking leaves of ${\mathcal{F}}_*$ to leaves
of $\widetilde{\mathcal{F}}_*$ and such that $\bar{h}(w)\rightarrow E$
as $w\rightarrow E$, then $\bar{h}$ induces a topological equivalence
between $\mathcal{F}$ and $\widetilde{\mathcal{F}}$. Thus, by simplicity, we
will say that any such $\bar{h}$ is a topological equivalence between
$\mathcal{F}$ and $\widetilde{\mathcal{F}}$. Moreover, when no confusion arises
we will often denote ${\mathcal{F}}_*$ and $\widetilde{\mathcal{F}}_*$
simply by $\mathcal{F}$ and $\widetilde{\mathcal{F}}$ respectively.

The starting point in the proof of Theorem \ref{main result} is the following result proved in \cite{rosas2}.
We say that a complex line $\mathfrak{P}$ passing through $0\in\mathbb{C}^2$ is $\mathcal{F}$-generic if the strict transform of $\mathfrak{P}$ by $\pi$ meets no singular point of $\mathcal{F}$ in the exceptional divisor.
\begin{thm}\label{preserva recta}
Let $\mathfrak{P}$ be a $\mathcal{F}$-generic complex line passing through $0\in\mathbb{C}^2$ and suppose that $\mathfrak{h}$ maps $\mathfrak{P}\cap {\mathfrak{U}}$ into a $\widetilde{\mathcal{F}}$-generic complex line. Then the algebraic multiplicity of $\mathcal{F}$ and $\widetilde{\mathcal{F}}$ are equal.
\end{thm}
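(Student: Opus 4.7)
The plan is to identify $\nu(\mathcal{F})$ with a topological invariant attached to the pair $(\mathcal{F},\mathfrak{P})$, so that the hypothesis that $\mathfrak{h}$ sends leaves to leaves and carries $\mathfrak{P}$ into a $\widetilde{\mathcal{F}}$-generic line $\widetilde{\mathfrak{P}}$ immediately forces $\nu(\mathcal{F})=\nu(\widetilde{\mathcal{F}})$.

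The analytic input is straightforward. If $\omega=\omega_\nu+\omega_{\nu+1}+\cdots$ with $\omega_\nu\neq 0$ defines $\mathcal{F}$, then for any $\mathcal{F}$-generic line $\mathfrak{P}$ the restriction $\omega|_{\mathfrak{P}}$ is a holomorphic 1-form on $\mathfrak{P}\cong\mathbb{C}$ vanishing at $0$ to order exactly $\nu$. In a linear parameter the leading coefficient is $a_\nu(\alpha,\beta)\alpha+b_\nu(\alpha,\beta)\beta$, where $(\alpha,\beta)$ is the direction of $\mathfrak{P}$ and $\omega_\nu=a_\nu\,dx+b_\nu\,dy$, and $\mathcal{F}$-genericity is precisely the nonvanishing of this expression. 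Equivalently, for small $t\neq 0$ a parallel translate $\mathfrak{P}_t$ has exactly $\nu$ tangency points with $\mathcal{F}$ that collapse to $0$ as $t\to 0$, so $\nu$ is the local intersection multiplicity at $0$ between $\mathfrak{P}$ and the analytic tangency locus of $\mathcal{F}$ with the pencil $\{\mathfrak{P}_t\}$.

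The substantive step is to promote this analytic/geometric count to a purely topological invariant of $(\mathcal{F},\mathfrak{P})$. The most natural route is to package the tangency data as a winding/linking number: on a small circle $\gamma\subset\mathfrak{P}$ around $0$, one tracks how the leaf directions rotate relative to the tangent direction of $\mathfrak{P}$, or equivalently how the intersection pattern of leaves with small transversals to $\mathfrak{P}$ wraps around $\gamma$. Any such number is manifestly preserved by a homeomorphism carrying leaves to leaves and $\mathfrak{P}$ to $\widetilde{\mathfrak{P}}$, since $\mathfrak{h}$ restricted to $\mathfrak{P}^\ast$ is a homeomorphism onto $\widetilde{\mathfrak{P}}^\ast$ conjugating the whole configuration to the one for $(\widetilde{\mathcal{F}},\widetilde{\mathfrak{P}})$.

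The main obstacle is precisely this middle step: identifying a topological invariant that really computes $\nu$. The most naive candidates do not work. For example, for $\omega=x^2\,dy-y^2\,dx$ (with $\nu=2$) each leaf meets a generic line only once, and the map $\mathfrak{P}^\ast\to\Sigma$ obtained by leaf-following to a fixed transversal is a local biholomorphism, so neither ``branched covering degree of leaf-following'' nor ``linking number of one leaf with $\mathfrak{P}$'' equals $\nu$. The correct invariant seems to measure the twisting of the foliation along $\mathfrak{P}$, and extracting it rigorously is best carried out on the blow-up $\widehat{\mathbb{C}^2}$: the strict transform $\tilde{\mathfrak{P}}$ meets $E$ at a regular point of $\mathcal{F}_\ast$, while the contribution to $\nu$ comes from the various singularities of $\mathcal{F}_\ast$ elsewhere on $E$, and one must sum local topological data from each of these singularities and verify that the total coincides with $\nu$ and is visible to the lifted homeomorphism.
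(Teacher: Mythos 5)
There is a genuine gap, and you in effect acknowledge it yourself. Your first step is fine and standard: for an $\mathcal{F}$-generic direction the restriction $\omega|_{\mathfrak{P}}$ vanishes at $0$ to order exactly $\nu$, equivalently $\nu$ is the tangency order of $\mathcal{F}$ with a generic line (this is the same computation that identifies the singular points of $\mathcal{F}_*$ on $E$ with the zeros of the tangent cone). But the theorem is precisely the assertion that this analytically defined number can be read off from data that a leaf-preserving homeomorphism mapping $\mathfrak{P}\cap\mathfrak{U}$ into a generic line must preserve, and that is the step you do not carry out: you correctly observe that the naive candidates (tangency counts with translates, covering degree of leaf-following, linking of a single leaf with $\mathfrak{P}$) fail, and then conclude with ``the correct invariant seems to measure the twisting\dots one must sum local topological data from each of these singularities and verify that the total coincides with $\nu$ and is visible to the lifted homeomorphism.'' That sentence is a restatement of the theorem, not a proof of it. Two specific obstacles are left untouched: (i) quantities phrased as ``rotation of leaf directions relative to the tangent of $\mathfrak{P}$'' use the differentiable structure, so their invariance under a mere homeomorphism is not ``manifest'' and must be replaced by genuinely topological (intersection/monodromy) data; (ii) the lifted map $h$ is only defined off the exceptional divisor and, as the present paper stresses, in general has no extension to $E$, so there is no a priori correspondence between the singularities of $\mathcal{F}_*$ and $\widetilde{\mathcal{F}}_*$ on $E$ over which one could ``sum local contributions''; the hypothesis only gives control along the single punctured fiber $P^*$, and extracting $\nu$ from that control is the actual content to be supplied.

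For the comparison you were asked about: the paper itself does not prove this statement — it imports it from \cite{rosas2} (where the hypothesis that $h|_{D^*}$ extends through the point $P\cap E$, i.e.\ that Hopf fibers go to Hopf fibers, is exactly what is exploited) — and the whole machinery of Sections 3--8 here (Propositions \ref{control anular}, \ref{regularizado}, \ref{redressing}) exists precisely to manufacture an equivalence satisfying that hypothesis. So your outline is compatible in spirit with the intended use of the blow-up, but it stops where the cited proof begins, and as written it does not establish the theorem.
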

The strict  transform  $P$ of $\mathfrak{P}$ is a Hopf fiber passing through a point $p\in E$. The hypothesis in Theorem \ref{preserva recta} basically means that, for some neighborhood $D$ of $p$ in $P$, we have that $h$ maps $D^* =D\backslash\{p\}$  into a Hopf  fiber. In particular this means that $h|_{D^*}$ extends continuously to $p$, which is the essence of the Hypothesis in Theorem \ref{preserva recta}. In general there is no extension of  $h|_{D^*}$ to $p$, even with the bilipschitz hypothesis. However, the strategy in the proof of Theorem \ref{main result} is to use $\mathfrak{h}$ and the bilipschitz hypothesis to construct another topological equivalence satisfying the hypothesis in Theorem \ref{preserva recta}.

Assume from now on that $\mathfrak{h}$ is bilipschitz.
By the Separatrix Theorem there exists an irreducible $\mathcal{F}$-invariant curve $S$ passing through $0\in\mathbb{C}^2$.  Then $\widetilde{S}=\mathfrak{h}(S)$ is an irreducible   $\widetilde{\mathcal{F}}$-invariant curve.  We can assume that $S$ and $\widetilde{S}$ are tangent to $\{(x,y)\in\mathbb{C}^2: y=0\}$ at $0\in\mathbb{C}^2$.
  Let $(t,x)$ be coordinates in $\widehat{\mathbb{C}^2}$ such that  $\pi(t,x)=(x,tx)\in\mathbb{C}^2$. In this chart the exceptional divisor $E$ is represented by $\{x=0\}$. Let  $\mathcal{S}$ and $\widetilde{\mathcal{S}}$ be the strict transform of $S$ and $\widetilde{S}$ by $\pi$. Then  $\mathcal{S}$ and $\widetilde{\mathcal{S}}$ pass through the point $(0,0)\in E$, so this point is singular for  $\mathcal{F}$ and $\widetilde{\mathcal{F}}$.
  The first step in order to prove Theorems \ref{main result} and \ref{second result} is the following proposition, proved in Section \ref{first step}.
  \begin{prop}
\label{control anular}Denote  $h(t,x)=(\tilde{t},\tilde{x})$. There exist positive constants $\rho$, $L_1<1<L_2$
and a continuous positive function $\delta:(0,\rho]\rightarrow\mathbb{R}$
such that, if $|t|=r\in(0,\rho]$ and $0<|x|\le \delta(r)$, then \[
L_{1}|t|\le |\tilde{t}|\le L_{2}|t|.\]

\end{prop}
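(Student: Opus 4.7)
The plan is to compare, on both sides of the bilipschitz homeomorphism $\mathfrak{h}$, two natural geometric quantities of a point $p=\pi(t,x)=(x,tx)\in\mathbb{C}^2$: its Euclidean distance $|p|$ to the origin, and its distance $d(p,S)$ to the separatrix $S$. The ratio $d(p,S)/|p|$ measures the angular position of $p$ relative to $S$, and in the chart $(t,x)$ this is essentially $|t|$, because $S$ is tangent to $\{y=0\}$ at $0$. Since $\mathfrak{h}$ is bilipschitz and $\mathfrak{h}(S)=\widetilde{S}$, both $|p|$ and $d(p,S)$ are preserved by $\mathfrak{h}$ up to multiplicative constants, and so is their ratio; this will force $|\tilde{t}|$ to be comparable to $|t|$.

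First I will establish that $|p|\asymp|x|$ and $d(p,S)\asymp r|x|$ for $|t|=r\in(0,\rho]$ and $0<|x|\le\delta(r)$, with $\rho$ small and $\delta(r)$ suitably small. The identity $|p|=|x|\sqrt{1+r^2}$ is immediate. For $d(p,S)$, a Puiseux parametrization $s\mapsto(s^a,\psi(s))$ of $S$ with $\psi(s)=O(s^{a+1})$ produces the candidate point $(x,\psi(x^{1/a}))\in S$, giving the upper bound $d(p,S)\le r|x|+C|x|^{1+1/a}$; the correction is absorbed by imposing $\delta(r)\lesssim r^a$. For the lower bound, the tangency of $S$ to $\{y=0\}$ ensures $S\subset\{|y|\le\epsilon|x|\}$ in a sufficiently small neighborhood of $0$, and a case split (depending on whether a candidate $q=(x_q,y_q)\in S$ satisfies $|x-x_q|\ge|x|/2$) yields $d(p,S)\ge r|x|/4$ as long as $r>2\epsilon$; this is ensured by shrinking the neighborhood as $r$ decreases.

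By the bilipschitz property of $\mathfrak{h}$ together with $\mathfrak{h}(S)=\widetilde{S}$, these estimates transfer to $|\mathfrak{h}(p)|\asymp|x|$ and $d(\mathfrak{h}(p),\widetilde{S})\asymp r|x|$. On the target side, $\mathfrak{h}(p)=(\tilde{x},\tilde{t}\tilde{x})$ gives $|\mathfrak{h}(p)|=|\tilde{x}|\sqrt{1+|\tilde{t}|^2}$, and the analogous Puiseux analysis of $\widetilde{S}$ yields $d(\mathfrak{h}(p),\widetilde{S})\asymp|\tilde{t}\tilde{x}|$, once the tangential correction $O(|\tilde{x}|^{1+1/\tilde{a}})$ is dominated by $r|x|$, which again follows from our choice of $\delta(r)$. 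Dividing the two asymptotic identities gives $|\tilde{t}|/\sqrt{1+|\tilde{t}|^2}\asymp r$. For $r\le\rho$ small this automatically forces $|\tilde{t}|$ to be small (so $\sqrt{1+|\tilde{t}|^2}\in[1,\sqrt{2}]$) and hence $|\tilde{t}|\asymp r$, producing the desired constants $L_1,L_2$.

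The main obstacle is the uniformity of the lower bound $d(p,S)\ge cr|x|$ (and its analog for $\widetilde{S}$) when the separatrix is singular: the size of the narrow-cone neighborhood shrinks with $r$, forcing $\delta(r)$ to decay roughly like $r^a$ where $a$ is the multiplicity of $S$ at $0$. A related subtlety is ruling out the degenerate regime in which $|\tilde{t}\tilde{x}|$ is dominated by $|\tilde{x}|^{1+1/\tilde{a}}$, so that the approximation $d(\mathfrak{h}(p),\widetilde{S})\asymp|\tilde{t}\tilde{x}|$ breaks down; this is handled by a contradiction against the transferred lower bound $d(\mathfrak{h}(p),\widetilde{S})\gtrsim r|x|$.
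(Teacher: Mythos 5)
Your proposal is correct in substance, but it reaches the conclusion by a genuinely different route than the paper, and the difference is most visible in the hard half of the statement. The paper's own proof gets the upper bound $|\tilde{t}|\le L|t|$ much as you do in spirit (its Lemma \ref{lem5} on relative distances, the comparison point $(x,y')\in S$ with the same abscissa, and the flatness of $\widetilde{S}$), but it obtains the lower bound by applying that estimate to $\mathfrak{h}^{-1}$, whose hypothesis is a smallness condition on the \emph{image} point $(\tilde{t},\tilde{x})$; translating that back into a condition on $(t,x)$, and making the resulting $\delta$ continuous, occupies the bulk of the paper's argument (the auxiliary functions $f'$ and $f_r$, the continuity of $f_r$, and a connectedness argument near the exceptional divisor that uses $h(w)\to E$). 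Your scheme — comparing the two bilipschitz quasi-invariants $|p|$ and $d(p,S)$ and reading $|\tilde{t}|$ off their ratio on the target side — yields both bounds from hypotheses on the source point alone, and your $\delta(r)$ is explicit (powers of $r$ dictated by the Puiseux exponents of $S$ and $\widetilde{S}$), so positivity and continuity are automatic and the paper's delicate continuity/connectedness step disappears; the price is a purely metric but slightly more careful analysis on the target side. The only places that need care beyond your sketch are exactly the ones you flag: the target-side lower bound $d(\mathfrak{h}(p),\widetilde{S})\gtrsim(|\tilde{t}|-\epsilon(r))|\tilde{x}|$ must be uniform in $|\tilde{t}|$ (it is, since the relevant points of $\widetilde{S}$ have norm $\lesssim|\mathfrak{h}(p)|\le M|p|$, which is controlled by $\delta(r)$ independently of $|\tilde{t}|$, so the regime of large $|\tilde{t}|$ is excluded and the case $|\tilde{t}|\le\epsilon(r)\lesssim r$ is trivial for the upper bound), and the degenerate regime $|\tilde{t}\tilde{x}|\lesssim|\tilde{x}|^{1+1/\tilde{a}}$ is killed, as you say, by the transferred lower bound $d\gtrsim r|x|$. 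With those points written out, your argument is complete and, for this half of the proposition, simpler and more symmetric than the paper's.
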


\begin{rem}\label{remark1}Clearly we can assume $L_1=\frac{1}{L}$, $L_2=L$ for some constant $L>1$.
\end{rem}

Fix $r_{1},r_{2}\in(0,\rho]$ with $r_{1}<r_{2}<L r_2\le 1/2$ and such that:
\begin{enumerate}
\item $\mbox{Sing}(\mathcal{F})\cap\{|t|\le r_2\}=\{(0,0)\}$
\item $\mbox{Sing}(\widetilde{\mathcal{F}})\cap\{|t|\le L r_2\}=\{(0,0)\}$.
\end{enumerate}
From Proposition \ref{control anular} there exist constants $\delta_{0},\tilde{\delta}_{0}>0$
such that for any $(t,x)$ in the set \[
V:=\{r_{1}\leq|t|\leq r_{2},0<|x|\leq\delta_{0}\}\]
the point $(\tilde{t},\tilde{x})=h(t,x)$ satisfies
\begin{equation}
\frac{1}{L}|t|\le |\tilde{t}|\le L|t|\mbox{ and }|\tilde{x}|<\tilde{\delta}_{0}.\label{control en t}\end{equation}
In particular $V$ is mapped by $h$ into the set \[
\widetilde{V}:=\{\frac{1}{L}r_{1}\leq|t|\le Lr_{2},0<|x|\leq\tilde{\delta}_{0}\}.\]

\begin{rem}
\label{r2/r1 grande}Observe that we can take $r_1$, $r_2$ and $r_{1}/r_{2}$ arbitrarily
small. Moreover, fixed $r_{1}$ and $r_{2}$ we can take $\delta_{0}$
and $\tilde{\delta}_{0}$ arbitrarily small. These facts will be used
later.
\end{rem}

 The set $V$ is the local where we will modify $h$ in order to satisfy the hypothesis in Theorem \ref{preserva recta}. Suppose that the punctured Hopf fiber $D^*$ is taken in $V$. In general the set $h(D^*)$ can accumulates to a large set in the exceptional divisor, for example a point $w\in h(D^*)$ could oscillate infinitely many times around $\{t=0\}$ as  $w\rightarrow E$. Observe that, is $\tilde{\delta}_0$ is small enough, the dynamic of
  $\mathcal{F}|_{\overline{V}}$ is basically the suspension of the holonomy map of the leaf $E\cap\overline{V}$. We can see this dynamic as the 1-foliation induced by $\mathcal{F}$ on a set $$T=\{(t,x):|t|=r_{12},|x|\le \delta_1\}$$ with $r_1<r_{12}<r_2$ and $0<\delta_1\le\delta_0$. The main step to prove Theorem \ref{main result} will be locate the problem in the set $T$. Thus we will construct (Proposition \ref{regularizado}) another topological equivalence $\bar{h}$ which (for suitable $r_{12}$ and $\delta_1$)  maps $T$ into the set $$\widetilde{T}=\{(t,x)\in \widetilde{V}:|t|=\sqrt{{r_1}{r_2}}\}.$$ Clearly the 1-foliation induced by $\widetilde{\mathcal{F}}$ on $\widetilde{T}$ can be defined by a flow. The local punctured Hopf fibers in $T$ are mapped by $\bar{h}$ into sets which in general are not Hopf fibers in $\widetilde{T}$, but the idea is to  arrive to this situation by using the flow to move  each point of $\widetilde{T}$ the correct amount. This idea is realized in Section \ref{main proof} and we construct another topological equivalence $\bar{\bar{h}}$ mapping Hopf fibers in $T$ into Hopf fibers in $\widetilde{T}$, which is the final step in order to prove Theorem \ref{main result}. All the constructions summarized above can be performed in a neighborhood of each singularity  of $\mathcal{F}$ in $E$ having a separatrix other than $E$. This is the starting point in the proof of Theorem \ref{second result} in Section \ref{projective holonomy representation}.

\section{Proof of Proposition \ref{control anular}}\label{first step}

We use the following Lemma.

\begin{lem}\label{lem5}If $z_1,z_2\in \mbox{dom}(\mathfrak{h})$ and $z_2\neq 0$, then $$\frac{m}{M} \frac{|z_1-z_2|}{|z_2|}\le\frac{|\mathfrak{h}(z_1)-\mathfrak{h}(z_2)|}{|\mathfrak{h}(z_2)|}\le \frac{M}{m} \frac{|z_1-z_2|}{|z_2|}.$$
\end{lem}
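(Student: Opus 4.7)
The plan is a direct double application of the bilipschitz inequality. Observe first that $\mathfrak{h}(0)=0$, since $\mathfrak{h}$ is a topological equivalence fixing the singular point $0\in\mathbb{C}^2$. Plugging the pair $(z_2, 0)$ into the bilipschitz bound yields
$$m|z_2|\le|\mathfrak{h}(z_2)|\le M|z_2|,$$
which controls the denominator $|\mathfrak{h}(z_2)|$ by $|z_2|$ from both sides, equivalently
$$\frac{1}{M|z_2|}\le\frac{1}{|\mathfrak{h}(z_2)|}\le\frac{1}{m|z_2|}.$$

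Next I would apply the bilipschitz estimate to the pair $(z_1, z_2)$ to obtain $m|z_1-z_2|\le|\mathfrak{h}(z_1)-\mathfrak{h}(z_2)|\le M|z_1-z_2|$. Combining the upper bound for the numerator with the lower bound for the denominator gives the upper estimate of the lemma with ratio $M/m$, and symmetrically combining the lower bound for the numerator with the upper bound for the denominator gives the lower estimate with ratio $m/M$. There is essentially no obstacle here: the lemma is just a reformulation of the bilipschitz property in \emph{relative} form, and it will be the basic tool used to propagate quantitative control of $\mathfrak{h}$ along radial directions in the proof of Proposition \ref{control anular}.
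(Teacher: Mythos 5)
Your proof is correct and follows essentially the same route as the paper: apply the bilipschitz inequality to the pair $(z_2,0)$ (using $\mathfrak{h}(0)=0$) to compare $|\mathfrak{h}(z_2)|$ with $|z_2|$, then to the pair $(z_1,z_2)$, and combine the two estimates. The only cosmetic difference is that the paper derives just the bound $|z_2|\le(1/m)|\mathfrak{h}(z_2)|$ for the right-hand inequality and says the other side is similar, whereas you state both bounds at once.
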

\proof
We have $$|\mathfrak{h}(z_2)|=|\mathfrak{h}(z_2)-\mathfrak{h}(0)|\ge m|z_2-0|=m|z_2|,$$
hence $$|z_2|\le (1/m)|\mathfrak{h}(z_2)|.$$
Then $$|\mathfrak{h}(z_1)-\mathfrak{h}(z_2)|\le M|z_1-z_2|=M\frac{|z_1-z_2|}{|z_2|}|z_2|\le (M/m)\frac{|z_1-z_2|}{|z_2|}| \mathfrak{h}(z_2)|$$ and we obtain the right hand inequality. The other inequality  is similar.

 \subsection{Proof of Proposition \ref{control anular}}
 Since $S$ and $\widetilde{S}$ are tangent to $\{y=0\}$ at $0\in\mathbb{C}^2$, (using Weierstrass preparation theorem)  there exist neighborhoods $U$ and $\widetilde{U}$ of
 $0\in{\mathbb{C}}^2$ and a constant $l>0$ such that:

 \begin{itemize}
 \item[(i)] If $(x,y)\in S\cap U$, then $|y|\le l|x|^2$.
 \item[(ii)] If $(x,y)\in \widetilde{S}\cap\widetilde{U}$, then $|y|\le l|x|^2$.
 \item[(iii)] Given $(x,y)\in U$, there exists $y'\in{\mathbb{C}}$ such that $(x,y')\in S\cap U$.
 \item[(iv)] $\mathfrak{h}(U)\subset\widetilde{U}$.
 \end{itemize}  Let $(t,x)$ be such that $x\neq 0$  and  $(x,y):=\pi(t,x)=(x,tx)$ is contained in $U$. By (iii)  there exists $y'$ such that
 $(x,y')\in S\cap U$. Then
 \begin{eqnarray}
 |(x,y)-(x,y')|&=&|y-y'|\leq |y|+|y'|\le  |t||x|+l|x|^2\nonumber\\
 &=&(|t|+l|x|)|x|\leq (|t|+l|x|)|(x,y')|,\nonumber
 \end{eqnarray}
 hence $$\frac{|(x,y)-(x,y')|}{|(x,y')|}\leq |t|+l|x|.$$
 and by Lemma \ref{lem5}:$$\frac{|\mathfrak{h}(x,y)-\mathfrak{h}(x,y')|}{|\mathfrak{h}(x,y')|}\leq\frac{M}{m}(|t|+l|x|).$$
 Since $\mathfrak{h}(S)=\widetilde{S}$ and $(x,y')\in S\cap U$, by (iv) we have  $(\bar{x},\bar{y}):=\mathfrak{h}(x,y')\in\widetilde{S}\cap \widetilde{U}$, hence $|\bar{y}|\le l|\bar{x}|^2$ and therefore
\begin{eqnarray}|\mathfrak{h}(x,y)-\mathfrak{h}(x,y')|&\leq& \frac{M}{m}(|t|+l|x|)|(\bar{x},\bar{y})|\leq\frac{M}{m}(|t|+l|x|)(|\bar{x}|+|\bar{y}|)\nonumber\\
&\leq&\frac{M}{m}(|t|+l|x|)(|\bar{x}|+l|\bar{x}|^2),\nonumber
\end{eqnarray}
that is:
\begin{equation}\label{eq1}|\mathfrak{h}(x,y)-\mathfrak{h}(x,y')|\leq\frac{M}{m}(|t|+l|x|)(1+l|\bar{x}|)|\bar{x}|.\end{equation}
If $\mathfrak{h}(x,y)=(\tilde{x},\tilde{y})$, then
$$|\mathfrak{h}(x,y)-\mathfrak{h}(x,y')|=|(\tilde{x},\tilde{y})-(\bar{x},\bar{y})|=|(\tilde{x}-\bar{x},\tilde{y}-\bar{y})|\geq |\tilde{x}-\bar{x}|\geq |\bar{x}|-|\tilde{x}|$$
and by equation \ref{eq1}
$$|\bar{x}|-|\tilde{x}|\leq\frac{M}{m}(|t|+l|x|)(1+l|\bar{x}|)|\bar{x}|,$$
and so
\begin{equation}\label{eq2}
|\bar{x}|\leq\frac{1}{1-\frac{M}{m}(|t|+l|x|)(1+l|\bar{x}|)}|\tilde{x}|.\end{equation}
On the other hand:
\begin{equation}\label{eq3}|\mathfrak{h}(x,y')-(\bar{x},0)|=|(\bar{x},\bar{y})-(\bar{x},0)|=|\bar{y}|\leq l |\bar{x}|^2,\end{equation}
then
\begin{eqnarray} |\tilde{y}|&\leq&|(\tilde{x}-\bar{x},\tilde{y})|
=|(\tilde{x},\tilde{y})-(\bar{x},0)|=|\mathfrak{h}(x,y)-(\bar{x},0)|\nonumber\\
&\leq & |\mathfrak{h}(x,y)-\mathfrak{h}(x,y')|+|\mathfrak{h}(x,y')-(\bar{x},0)|, \nonumber
\end{eqnarray}
by equations \ref{eq1}  and  \ref{eq3}:
\begin{eqnarray}
|\tilde{y}|
&\leq&\frac{M}{m}(|t|+l|x|)(1+l|\bar{x}|)|\bar{x}|+l |\bar{x}|^2=\left(\frac{M}{m}(|t|+l|x|)(1+l|\bar{x}|)|+l |\bar{x}|\right)|\bar{x}|\nonumber
\end{eqnarray}
and by equation \ref{eq2}:
\begin{equation}|\tilde{y}|\leq \left(\frac{\frac{M}{m}(|t|+l|x|)(1+l|\bar{x}|)+l |\bar{x}|}{1-\frac{M}{m}(|t|+l|x|)(1+l|\bar{x}|)}\right)|\tilde{x}|,\end{equation}
hence \begin{equation}|\tilde{t}|\leq \frac{\frac{M}{m}(|t|+l|x|)(1+l|\bar{x}|)|+l |\bar{x}|}{1-\frac{M}{m}(|t|+l|x|)(1+l|\bar{x}|)}.\label{eqtxy}\end{equation}

By Puiseux Theorem there is a local parameterization of $(S,0)$ of
the form $\psi(\zeta)=(\zeta^{n},\psi_{2}(\zeta))$ defined on a small
disc $|\zeta|\leq\varrho$. Denote $\mathfrak{h}=(\mathfrak{h}_{1},\mathfrak{h}_{2})$.
Given $s\in[0,\varrho^n]$ define \[
f(s)=\sup\{|\mathfrak{h}_{1}(\psi(\zeta))|:|\zeta|\leq s^{\frac{1}{n}}\}.\]
It is easy to see that $f$ is a strictly increasing continuous function.
Observe that $$|\bar{x}|=|\mathfrak{h}_{1}(x,y')|=|\mathfrak{h}_{1}(\psi(x^{\frac{1}{n}}))|\leq f(|x|).$$
Using this fact in equation \ref{eqtxy} we obtain \begin{equation}
|\tilde{t|}\le\frac{\frac{M}{m}(|t|+l|x|)(1+lf(|x|))+lf(|x|)}{1-\frac{M}{m}(|t|+l|x|)(1+lf(|x|))}.\label{control anular 1}\end{equation}
 Let $f^{-1}$ be the inverse function of $f$ and consider the strictly
increasing function $\eta(r)=\min\{f^{-1}(r),r/l\}$ defined on an interval $[0,\rho]$. Suppose now
$|t|=r\in(0,\rho]$ and $|x|\leq\eta(r)$. Using the inequalities $|x|\leq r/l$
and $|x|\le f^{-1}(r)$ in equation \ref{control anular 1} we obtain
\[
|\tilde{t|}\le\frac{\frac{M}{m}(r+l(r/l))(1+lf(f^{-1}(r)))+lf(f^{-1}(r))}{1-\frac{M}{m}(r+l(r/l))(1+lf(f^{-1}(r)))}=\frac{2\frac{M}{m}r(1+lr)+lr}{1-2\frac{M}{m}r(1+lr)}.\]
 Clearly we can take $\rho$ small enough such that $(1+lr)\le2$
and $2\frac{M}{m}r(1+lr)\le1/2$ and therefore we obtain\[
|\tilde{t|}\le\frac{2\frac{M}{m}r(2)+lr}{1-(\frac{1}{2})}=Lr,\]
 were $L=(8\frac{M}{m}+2l)$.

If we apply the above arguments to $\mathfrak{h}^{-1}$ we find positive
constants $\tilde{\rho}$, $\widetilde{L}$ and a strictly increasing
continuous function $\tilde{\eta}:[0,\tilde{\rho}]\rightarrow[0,+\infty]$
such that $|t|\leq\widetilde{L}|\tilde{t}|$, whenever $|\tilde{t}|=\tilde{r}\in[0,\tilde{\rho}]$
and $|\tilde{x}|\le\tilde{\eta}(\tilde{r})$. Define the functions
$f':[0,\rho]\rightarrow[0,+\infty]$ and $f_{r}:[0,\eta(r)]\rightarrow[0,+\infty]$
($r\in(0,\rho]$) by $f'(0)=0$, $f_{r}(0)=0$ and \[
f'(s)=\inf\{|\tilde{t}|:|t|=s,0<|x|\le\eta(s)\},\]
 \[
f_{r}(s)=\sup\{|\tilde{x}|:|t|=r,0<|x|\le s\}\]
 if $s>0$. Clearly $f'(s)\le Ls\le L\rho$ and by reducing $\rho$ if necessary we can assume $f'(s)\le \widetilde{\rho}$, hence $\tilde{\eta}\circ f'$ is well defined.  For $r\in(0,\rho]$ define $\delta'(r)>0$ as follows.
If $f_{r}(\eta(r))\leq\tilde{\eta}\circ f'(r)$ define $\delta'(r)=\eta(r)$,
otherwise we choose $\delta'(r)$ such that $f_{r}(\delta'(r))=\tilde{\eta}\circ f'(r)$;
this is possible because the function $f_{r}$ is continuous and $f_{r}(0)=0$.
In any case we have $f_{r}(\delta'(r))\le\tilde{\eta}\circ f'(r)$. Moreover,
since $f_{r}$ is increasing we have $\delta'\le\eta$. Consider
now $(t,x)$ such that $|t|=r\in(0,\rho]$ and $|x|\le\delta'(r)$.
Then \[
|\tilde{x}|\leq\sup\{|\tilde{x}|:|t|=r,0<|x|\le\delta'(r)\}=f_{r}(\delta'(r))\le\tilde{\eta}\circ f'(r).\]But $\tilde{\eta}\circ f'(r)\le\tilde{\eta}(|\tilde{t}|)$ because
$\tilde{\eta}$ is increasing, hence
$|\tilde{x}|\le\tilde{\eta}(|\tilde{t}|)$
and by definition of $\tilde{\eta}$ we have $|t|\le\tilde{L}|\tilde{t}|$.
Since $\delta'\le\eta$ we have $|x|\le\eta (r)$, hence $|\tilde{t}|\le L|t|$. Therefore \[
L_{1}|t|\le|\tilde{t}|\le L_{2}|t|,\]  where $L_{1}=1/\tilde{L}$ and
$L_{2}=L$. Now it is sufficient to find a continuous function $\delta$
such that $0<\delta\leq\delta'$. By using a partition of unity it
is easy to see that it suffices to show that ${\displaystyle \liminf_{r'\rightarrow r}\delta'(r')>0}$
for all $r\in(0,\rho]$. Suppose by contradiction that there is a
sequence $(r_{n})$ with $r_{n}\rightarrow r$ and $\delta'(r_{n})\rightarrow0$.
By definition we have $\delta'(r_{n})=\eta(r_{n})$ or $f_{r_{n}}(\delta'(r_{n}))=\tilde{\eta}\circ f'(r_{n})$.
But $\delta'(r_{n})=\eta(r_{n})$ does not occur infinitely many times
because $\eta$ is continuous and $\eta(r)>0$. Then assume that \begin{equation}f_{r_{n}}(\delta'(r_{n}))=\tilde{\eta}\circ f'(r_{n})\label{pasto}\end{equation}
for all $n\in\mathbb{N}$ and suppose moreover that $f'(r_{n})\rightarrow\epsilon\ge0$.
As we will show at the end of the proof, $f_{r}(s)$ depends continuously
on $(r,s)$. Then, if $n\rightarrow\infty$ from equation \ref{pasto} we obtain $f_{r}(0)=\tilde{\eta}(\epsilon)$.
But $f_{r}(0)=0$ and therefore $\epsilon=0$, that is, $f'(r_{n})\rightarrow 0$.
By definition of $f'$ we may take $w_{n}=(t_{n},x_{n})$ with $|t_{n}|=r_{n}$,
$0<|x_{n}|\leq\eta(r_{n})$ and such that $|\widetilde{t_{n}}|\rightarrow 0$
as $n\rightarrow\infty$,  $(\tilde{t}_n\tilde{x}_n)=h(t_n,x_n)$. We can assume $t_{n}\rightarrow\bar{t}$,
$x_{n}\rightarrow\bar{x}$ with $|\bar{t}|=r$, $0\le|\bar{x}|\le\eta(r)$.
But $\bar{x}\neq0$ implies by continuity of $h$ that $|\widetilde{t_{n}}|\rightarrow|\tilde{\bar{t}}|>0$, where $(\tilde{\bar{t}},\tilde{\bar{x}})=h(\bar{t},\bar{x})$,
contradiction. Then we have $x_{n}\rightarrow0$. Take $\tilde{r}>0$
such that $\widetilde{L}\tilde{r}<r$ and set $$\widetilde{T}=\{(t,x):|t|=\tilde{r},0<|x|\leq\tilde{\eta}(\tilde{r})\}.$$
As we have seen $h^{-1}(\widetilde{T})$ is contained in $\{|t|\le\widetilde{L}\tilde{r}\}$.
Then, since $\widetilde{L}\tilde{r}<r$, provided $n$ is large enough
the point $w_{n}$ can be connected with the set $$T=\{(t,x):|t|=r,0<|x|\le\eta(r)\}$$
by a curve $\alpha_{n}$ contained in the set $$\{\widetilde{L}\tilde{r}<|t|\leq r,0<|x|\}.$$
In particular $\alpha_{n}$ does not meet the set $h^{-1}(\widetilde{T})$
and consequently the curve $h(\alpha_{n})$ does not meet $\widetilde{T}$.
Moreover, since $\alpha_{n}$ meets $T$ and $h(T)$ is contained
in $$\{L_{1}r\le|t|\}=\{\frac{1}{\widetilde{L}}r\le|t|\}\subset\{\tilde{r}<|t|\},$$
then $h(\alpha_{n})$ meets the set $\{\tilde{r}<|t|\}$. Observe
that $\alpha_{n}$ may be taken arbitrarily close to the exceptional
divisor for $n$ large enough. Then by connectedness  we see that, since
$h(\alpha_{n})$ does not meet $\widetilde{T}$, necessarily $h(\alpha_{n})$
is contained in the set $\{\tilde{r}<|t|\}$. In particular $h(w_{n})\in\{\tilde{r}<|t|\}$
and therefore $|\tilde{t}_n|>\tilde{r}>0$ for $n\in\mathbb{N}$
large enough, which is a contradiction. It remains to prove that $f_{r}(s)$
depends continuously on $(r,s)$. Fix $r_{0}>0$, $s_{0}\in(0,\eta(r_{0}))$.
Since $h(w)$ tends to the exceptional divisor as $w$ tends to the exceptional divisor we
may find $\varepsilon>0$ such that $$f_{r}(s)=\sup\{|\tilde{x}|:|t|=r,\varepsilon<|x|\le s\}$$
for all $(r,s)$ close enough to $(r_{0},s_{0})$. From here the continuity at $(r_{0},s_{0})$  is
easy to establish. If $s_{0}=0$
clearly we have $f_{r_{0}}(s_0)=0$. If $\varepsilon>0$ is small enough
the set $$V=\{r-\varepsilon\leq|t|\le r+\varepsilon,0<|x|\leq\varepsilon\}$$
is mapped by $h$ arbitrarily close to the exceptional divisor. Then
$|\tilde{x}|$ is uniformly small for $(t,x)\in V$ and the continuity
at $(r_{0},s_0$) follows.

\begin{cor}
\label{control anular 2} Let $L_1$ and $L_2$ be as in Proposition \ref{control angular}.Then there exist a constant $\tilde{\rho}>0$
and a continuous positive function $\tilde{\delta}:(0,\tilde{\rho}]\rightarrow\mathbb{R}$
such that, if $|\tilde{t}|=r\in(0,\tilde{\rho}]$ and $0<|\tilde{x}|\le \tilde{\delta}(r)$, then \[
L_{1}|\tilde{t}|\le |{t}|\le L_{2}|\tilde{t}|.\]

\end{cor}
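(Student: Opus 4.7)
The plan is to obtain Corollary \ref{control anular 2} as a direct consequence of Proposition \ref{control anular} by applying it to the inverse map $\mathfrak{h}^{-1}$ instead of $\mathfrak{h}$. All the ingredients used in the statement and proof of the proposition are symmetric in the two foliations: $\mathfrak{h}^{-1}$ is bilipschitz with constants $1/M$ and $1/m$; it is a topological equivalence between $\widetilde{\mathcal{F}}$ and $\mathcal{F}$; and by construction the irreducible invariant curve $\widetilde{S}=\mathfrak{h}(S)$ plays with respect to $\mathfrak{h}^{-1}$ exactly the role that $S$ played with respect to $\mathfrak{h}$. Both $S$ and $\widetilde{S}$ were assumed to be tangent to $\{y=0\}$ at the origin, so the blow-up coordinates $(t,x)$ and $(\tilde t,\tilde x)$ are set up symmetrically, and the singular point $(0,0)\in E$ is common to $\mathcal{S}$ and $\widetilde{\mathcal{S}}$.

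Consequently, a verbatim application of Proposition \ref{control anular} to the data $(\mathfrak{h}^{-1},\widetilde{\mathcal{F}},\mathcal{F},\widetilde{S},S)$ produces constants $\tilde\rho>0$, $L_1^*<1<L_2^*$, and a continuous positive function $\tilde\delta:(0,\tilde\rho]\to\mathbb{R}$ such that whenever $|\tilde t|=r\in(0,\tilde\rho]$ and $0<|\tilde x|\le\tilde\delta(r)$ one has
\[
L_1^*|\tilde t|\le |t|\le L_2^*|\tilde t|.
\]
This is precisely the statement of the corollary, apart from the identity of the constants.

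To reconcile the constants, I would invoke Remark \ref{remark1}. Applied to $\mathfrak{h}$ it lets us write $L_1=1/L$, $L_2=L$ for some $L>1$; applied to $\mathfrak{h}^{-1}$ it produces a possibly different constant $L'>1$ with $L_1^*=1/L'$, $L_2^*=L'$. Replacing $L$ by $\max(L,L')$ throughout — which only weakens the conclusion of Proposition \ref{control anular} and not its validity — we can use the same pair $(L_1,L_2)=(1/L,L)$ in both the proposition and the corollary.

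Since the proof of Proposition \ref{control anular} was itself designed to be symmetric (indeed, halfway through it the author already applies the argument to $\mathfrak{h}^{-1}$ to obtain the one-sided inequality $|t|\le\widetilde L|\tilde t|$), there is no serious obstacle in this corollary. The only point requiring minor care is the verification that the hypotheses on the separatrices and on the tangent direction $\{y=0\}$ carry over under $\mathfrak{h}\leftrightarrow\mathfrak{h}^{-1}$; this is immediate from the fact that $\mathfrak{h}(S)=\widetilde{S}$ and from the symmetric tangency assumption.
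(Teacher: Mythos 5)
Your proposal is correct and is essentially the paper's own argument: the author disposes of the corollary by noting it follows from the (symmetric) proof of Proposition \ref{control anular}, i.e.\ by running the same construction with $\mathfrak{h}^{-1}$ in place of $\mathfrak{h}$, exactly as you do. Your extra care about reconciling the constants via Remark \ref{remark1} (taking the larger $L$) matches the remark the paper places immediately after the corollary.
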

\begin{proof}
This is a corollary of the proof of Proposition \ref{control angular}.
\end{proof}
\begin{rem} As in remark \ref{remark1}  we will assume $L_1=1/L,L_2=L$.
\end{rem}

\section{Complex time distortion}\label{distorcion}

In this section we fix complex flows associated to the foliations on $V$ and $\widetilde{V}$  and study the distortion of the complex time induced by $h$. Thus we establish Equation \ref{control en tau}  and state Proposition \ref{creciente-decreciente} which give us some  estimations for the complex time distortion.

Since the foliations $\mathcal{F}$ and $\widetilde{\mathcal{F}}$
are tangent to the exceptional divisor, we can assume that  $\mathcal{F}|_V$ and $\widetilde{\mathcal{F}}|_{\widetilde{V}}$ are generated by holomorphic vector fields $$Z=t\frac{\partial}{\partial t}+xQ\frac{\partial}{\partial x}$$ and $$\widetilde{Z}=t\frac{\partial}{\partial t}+x\widetilde{Q}\frac{\partial}{\partial x},$$ where $Q$   and $\widetilde{Q}$ are holomorphic functions on $\overline{V}$ and $\overline{\widetilde{V}}$ respectively\footnote{Then $Z$ and $\widetilde{Z}$ are defined on some neighborhoods of $\overline{V}$ and $\overline{\widetilde{V}}$, respectively.}.
Consider the complex flows $\phi^{T}=(\phi_{1}^{T},\phi_{2}^{T})$
and $\tilde{\phi}^{T}=(\tilde{\phi}_{1}^{T},\tilde{\phi}_{2}^{T})$ associated to $Z$ and $\widetilde{Z}$ respectively.
Clearly we have
$\phi_{1}^{T}(t,x)=\tilde{\phi}_{1}^{T}(t,x)=te^{T}$. Consider $w=(t,x)\in V$ and $$T\in\{\tau+{\theta i}\in\mathbb{C}:\ln \frac{r_{1}}{|t|}\le\tau\le\ln \frac{r_{2}}{|t|}\}$$
such that the path $\gamma_{w}^{T}(s)=\phi^{sT}(w)$, $s\in[0,1]$
is well defined and contained in $V$. Since $h\circ\gamma_{w}^{T}$ is a path starting
at $h(w)$ and contained in a leaf of $\widetilde{\mathcal{F}}|_{\widetilde{V}}$,
we can write $h\circ\gamma_{w}^{T}(s)=\tilde{\phi}^{\sigma(s)}(h(w))$,
where $\sigma:[0,1]\rightarrow\mathbb{C}$ is a continuous path with
$\sigma(0)=0$. Then define $$\widetilde{T}(w,T)=\sigma(1),$$
that is:``the
complex time between $h(w)$ and $h\phi^{T}(w)$''.

Put $\widetilde{T}=\tilde{\tau}+i\tilde{\theta}$.
Since $h\circ\gamma_{w}^{T}(1)=\tilde{\phi}^{\sigma(1)}(h(w))$, then
\[
h(\phi_{1}^{T}(w),\phi_{2}^{T}(w))=(\tilde{\phi}_{1}^{\widetilde{T}}h(w),\tilde{\phi}_{2}^{\widetilde{T}}h(w)).\]
and by  Equation \ref{control en t} for the point $\phi^{T}(w)$:
\[
\frac{1}{L}|\phi_{1}^{T}(w)|\le |\tilde{\phi}_{1}^{\widetilde{T}}h(w)|\le L|\phi_{1}^{T}(w)|.\]
Then, if $h(w)=(\tilde{t},\tilde{x})$ we have \[
\frac{1}{L}|t|e^{\tau}\le |\tilde{t}|e^{\tilde{\tau}}\le L|t|e^{\tau}\]
 and together with equation \ref{control en t} we obtain
 \begin{equation}
\tau-2\ln L\le \tilde{\tau}(w,T)\le \tau+2\ln L,\label{control en tau}
\end{equation}
which give us a control on the real part of the complex time $\widetilde{T}$
only in terms of the real part of the complex time $T$. Sections
\ref{sec:Consequences-of-the} and \ref{sec:Proof-of-Proposition}
are devoted to prove the following proposition, which establish a kind
of control on the imaginary part $\tilde{\theta}$ of $\widetilde{T}$.
\begin{prop}
\label{creciente-decreciente} There exist increasing homeomorphisms
$\vartheta_{1},\vartheta_{2}:\mathbb{R}\rightarrow\mathbb{R}$ such
that one of the following situations holds:
\begin{enumerate}
\item $\vartheta_{1}(\theta)\le\tilde{\theta}(w,T)\le\vartheta_{2}(\theta)$
for any $(w,T)$ such that $\tilde{\theta}$ is defined.
\item $\vartheta_{1}(\theta)\le-\tilde{\theta}(w,T)\le\vartheta_{2}(\theta)$
for any $(w,T)$ such that $\tilde{\theta}$ is defined.
\end{enumerate}
\end{prop}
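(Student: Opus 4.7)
The plan is to exploit the cocycle structure that $h$ induces on the complex times. From $h\circ\phi^{T_{1}+T_{2}}=\tilde{\phi}^{\widetilde{T}(w,T_{1}+T_{2})}\circ h$ and the flow property of $\tilde{\phi}$ one extracts the identity
\[
\widetilde{T}(w,T_{1}+T_{2})=\widetilde{T}(w,T_{1})+\widetilde{T}(\phi^{T_{1}}(w),T_{2}),
\]
and $\widetilde{T}$ depends continuously on $(w,T)$ in its domain. Together these reduce the task to controlling a single ``full turn'' $\tilde{\theta}(w,2\pi i)$ uniformly in $w$, and to handling a bounded residual parameter $(\tau,\theta_{0})$.

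Set $\Theta(w):=\tilde{\theta}(w,2\pi i)$. Continuity of $\Theta$ together with compactness of the relevant subset of $V$ gives a uniform upper bound for $|\Theta|$. The main quantitative claim is that there is a sign $\epsilon\in\{+1,-1\}$ with $\epsilon\Theta(w)\ge A>0$ uniformly in $w$; this sign distinguishes alternatives~(1) and~(2) of the statement. To produce $\epsilon$ I would let $w$ approach the divisor $E\cap V$ and examine the limit of the image path $h\circ\gamma_{w}^{2\pi i}$ projected onto the first coordinate: the radial control from Proposition \ref{control anular} and Corollary \ref{control anular 2} forces this limit to be a topological loop encircling the origin with a definite sign $\epsilon$, whence $|\Theta|$ is bounded below near $E$. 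Continuity of $\Theta$ and connectedness of its domain then propagate the same bound over all of $V$.

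With $A\le\epsilon\Theta(w)\le B$ in hand, the cocycle gives by induction on $|k|$ that
\[
kA\le\epsilon\tilde{\theta}(w,2\pi ki)\le kB\qquad(k\in\mathbb{Z}_{>0}),
\]
and symmetrically for negative $k$. For a general $T=\tau+i\theta$ write $\theta=2\pi k+\theta_{0}$ with $|\theta_{0}|<2\pi$ and split $\widetilde{T}(w,T)=\widetilde{T}(w,\tau+i\theta_{0})+\widetilde{T}(\phi^{\tau+i\theta_{0}}(w),2\pi ki)$. The first summand has uniformly bounded imaginary part by continuity, compactness of the parameter set, and Equation \ref{control en tau}; the second is pinned by the iterated full-turn estimate. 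Taking suprema and infima over $w$ and over $(\tau,\theta_{0})$ produces two eventually-linear bounds in $\theta$ at the lattice $2\pi\mathbb{Z}$ which are easily smoothed into increasing homeomorphisms $\vartheta_{1},\vartheta_{2}:\mathbb{R}\to\mathbb{R}$.

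The hardest step will be the nonvanishing of $\Theta$. Continuity alone does not rule out that the limiting image path degenerates angularly, so the argument genuinely needs the bilipschitz hypothesis — precisely through the uniform angular rigidity already established as Proposition \ref{control anular}. I expect this limiting winding calculation to be the substantive content of Sections \ref{sec:Consequences-of-the} and \ref{sec:Proof-of-Proposition}.
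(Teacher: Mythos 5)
Your reduction via the cocycle identity $\widetilde{T}(w,T_{1}+T_{2})=\widetilde{T}(w,T_{1})+\widetilde{T}(\phi^{T_{1}}(w),T_{2})$ is sound bookkeeping, but both uniform single-turn estimates on which you rest everything are unjustified, and this is where the real content lies. First, the set on which $\Theta(w)=\tilde{\theta}(w,2\pi i)$ is defined is \emph{not} compact: it is contained in the punctured region $\{r_{1}\le|t|\le r_{2},\,0<|x|\le\delta_{0}\}$, and the delicate regime is exactly $w\to E$, where $h$ has no continuous extension; so ``continuity plus compactness'' gives no uniform upper bound on the winding of $h\circ\gamma_{w}^{2\pi i}$. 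Second, your lower bound $\epsilon\Theta(w)\ge A>0$ is attributed to Proposition \ref{control anular} and Corollary \ref{control anular 2}, but those results control only the moduli ($\frac{1}{L}|t|\le|\tilde{t}|\le L|t|$) and carry no information about $\arg\tilde{t}$; they cannot force the limiting image path to encircle $\{t=0\}$ with a definite sign, and in fact the paper never proves such a single-turn nonvanishing statement (its Proposition \ref{control angular} only yields $a_{1}|\theta|-b_{1}\le|\tilde{\theta}|\le a_{2}|\theta|+b_{2}$, which says nothing about $\theta=2\pi$ unless $2\pi a_{1}>b_{1}$). The ingredient you never invoke, and which is where the bilipschitz hypothesis actually enters, is the Lipschitz control \emph{along the leaves} (Fact \ref{hecho}, Lemma \ref{lipschitz eta}, Proposition \ref{lipschitz curva}): the paper bounds $l(\gamma_{w}^{T})$ by an affine function of $|\theta|$, transfers this through $l(h\circ\gamma)\le M_{0}\,l(\gamma)$, bounds $l(h\circ\gamma)$ from below by $\frac{1}{L}r_{1}|\tilde{\theta}|-1$, and gets the lower affine bound by running the same argument for $h^{-1}$. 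The same objection applies to your treatment of the residual term $\widetilde{T}(w,\tau+i\theta_{0})$: Equation \ref{control en tau} constrains only the real part $\tilde{\tau}$, and compactness again fails near $E$, so the ``uniformly bounded imaginary part'' claim there is another instance of the same gap.

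Concerning the sign dichotomy: the paper does not determine a sign for a single turn at all. It fixes $c$ with $a_{1}c-b_{1}>0$, uses connectedness of the domain to fix a sign of $\tilde{\theta}$ separately on $\{\theta\ge c\}$ and on $\{\theta\le-c\}$, and excludes the mixed configuration by the cocycle trick: if $\tilde{\theta}(w_{0},-c_{1}i)=\tilde{\theta}(w_{0},c_{2}i)$ for some $c_{1},c_{2}\ge c$, then $\tilde{\theta}(\phi^{-c_{1}i}(w_{0}),(c_{1}+c_{2})i)=0$, contradicting the affine lower bound; the homeomorphisms $\vartheta_{1},\vartheta_{2}$ are then assembled from piecewise-linear functions exactly as you envisage at the end. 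So your final smoothing step is fine, but the two uniform estimates feeding it — the upper bound by compactness and the signed lower bound from radial control — are precisely the parts that would fail as written; to repair them you need the leafwise length comparison of Section \ref{sec:Consequences-of-the}, not Proposition \ref{control anular}.
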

\begin{rem}
Case 1 and case 2 in Proposition $\ref{creciente-decreciente}$ happen
according to $h$ preserves or reverses the natural orientations of
the leaves.
\end{rem}

\section{\label{sec:Consequences-of-the}Lipschitz condition along the leaves}

In this section we study the consequences of the Lipschitz condition along the leaves and
the following proposition is the main result of this section.  We use this result in the proof of Proposition \ref{creciente-decreciente} in Section \ref{sec:Proof-of-Proposition}. Given a rectifiable path $\gamma$ in $V$, we denote by $l(\gamma)$  the length of $\gamma$ induced by the norm $|(t,x)|=|t|+|x|$.
\begin{prop}
\label{lipschitz curva}There exists a constant $M_0>0$ with the following property. If $\gamma:[0,1]\rightarrow V$ is a continuous
rectifiable path contained in a leaf of $\mathcal{F}$, then%
 \[
\frac{1}{M_{0}}l(\gamma)\leq l(h\circ\gamma)\le M_{0}l(\gamma).\]
\end{prop}
We need some lemmas.

\begin{lem}\label{bu1} For $j=1,2$  put
$w_j=(t_j,x_j)$, $z_j=\pi (w_j)=(x_j,t_jx_j)$ and suppose $|x_j|,|t_j|\le\frac{1}{2}$.
\begin{enumerate}
\item If ${\mathfrak{x}}=\max\{|x_{1}|,|x_{2}|\}\neq 0$, then $$|w_{1}-w_{2}|\leq\frac{1}{{\mathfrak{x}}}|z_{1}-z_{2}|.$$
\item If $|x_{1}-x_{2}|\leq\eta|x_{1}||t_{1}-t_{2}|$ for some $\eta>0$,
then $$|z_{1}-z_{2}|\leq(2\eta+1)|x_{1}||w_{1}-w_{2}|.$$
\end{enumerate}
\end{lem}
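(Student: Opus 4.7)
The plan is to expand everything using $\pi(t,x) = (x,tx)$ and the norm $|(a,b)| = |a|+|b|$, so that $|z_1-z_2| = |x_1-x_2| + |t_1x_1 - t_2x_2|$ and $|w_1-w_2| = |t_1-t_2|+|x_1-x_2|$. The key identity for both parts is
\[
t_1 x_1 - t_2 x_2 = (t_1-t_2)\,x_1 + t_2(x_1-x_2),
\]
which isolates the ``twist'' coming from the blow-up. The only analytic input beyond this identity is the hypothesis $|t_j|,|x_j|\le \tfrac{1}{2}$, which supplies the slack needed to absorb error terms.

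For part (1), assume without loss of generality that $\mathfrak{x} = |x_1|$. Applying the reverse triangle inequality to the identity above and using $|t_2|\le 1/2$ yields
\[
|t_1 x_1 - t_2 x_2| \ \ge\ \mathfrak{x}\,|t_1-t_2| - \tfrac{1}{2}|x_1-x_2|.
\]
Adding $|x_1-x_2|$ to both sides gives $|z_1-z_2| \ge \mathfrak{x}|t_1-t_2| + \tfrac{1}{2}|x_1-x_2|$. Since $\mathfrak{x}\le 1/2$, the second term dominates $\mathfrak{x}|x_1-x_2|$, and the right hand side is at least $\mathfrak{x}(|t_1-t_2|+|x_1-x_2|) = \mathfrak{x}|w_1-w_2|$, which is exactly the claim.

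For part (2), the direct triangle inequality on the same identity, combined with $|t_2|\le 1/2$, gives $|t_1x_1 - t_2 x_2| \le |x_1||t_1-t_2| + \tfrac{1}{2}|x_1-x_2|$, hence
\[
|z_1-z_2| \ \le\ |x_1||t_1-t_2| + \tfrac{3}{2}|x_1-x_2|.
\]
Inserting the hypothesis $|x_1-x_2| \le \eta|x_1||t_1-t_2|$ and then bounding $|t_1-t_2|\le |w_1-w_2|$ produces $|z_1-z_2|\le (1+\tfrac{3}{2}\eta)|x_1||w_1-w_2|$, which is $\le (2\eta+1)|x_1||w_1-w_2|$ since $\eta>0$.

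There is essentially no obstacle: the whole lemma is a one-line identity followed by two applications of the triangle inequality, so the only thing to watch is invoking $|t_j|,|x_j|\le 1/2$ at the right moment to convert the cross term $|t_2||x_1-x_2|$ into a factor that can be absorbed or traded for the desired coefficient.
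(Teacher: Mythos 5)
Your proof is correct and follows essentially the same route as the paper: the same decomposition $t_1x_1-t_2x_2=(t_1-t_2)x_1+t_2(x_1-x_2)$ with the triangle inequality, using $|t_2|,|x_j|\le\tfrac12$ to absorb the cross term. The only cosmetic differences are that you handle part (1) by a symmetry (WLOG $\mathfrak{x}=|x_1|$) where the paper runs the estimate twice, once for each index, and in part (2) you keep the sharper factor $1+\tfrac32\eta$ before relaxing it to $2\eta+1$.
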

\begin{proof}
we have \begin{eqnarray*}
|z_{1}-z_{2}| & = & |(x_{1},t_{1}x_{1})-(x_{2},t_{2}x_{2})|=|x_{1}-x_{2}|+|t_{1}x_{1}-t_{2}x_{2}|\\
 & = & |x_{1}-x_{2}|+|(t_{1}-t_{2})x_{1}-t_{2}(x_{2}-x_{1})|\\
 & \ge & |x_{1}-x_{2}|+|x_{1}||t_{1}-t_{2}|-|t_{2}||x_{2}-x_{1}|,\end{eqnarray*}
but $|t_{1}|,|x_{1}|\le\frac{1}{2}$, then \begin{eqnarray*}
|z_{1}-z_{2}| & \ge & \frac{1}{2}|x_{1}-x_{2}|+|x_{1}||t_{1}-t_{2}|\\ & \ge & |x_{1}||x_{1}-x_{2}|+|x_{1}||t_{1}-t_{2}|\\ & = & |x_{1}||w_{1}-w_{2}|\end{eqnarray*}
 and therefore  $|w_{1}-w_{2}|\leq\frac{1}{|x_1|}|z_{1}-z_{2}|.$  In the same way $|w_{1}-w_{2}|\leq\frac{1}{|x_2|}|z_{1}-z_{2}|$ and item (1) follows.

On the other hand we have \begin{eqnarray*}
|z_{1}-z_{2}| & = & |x_{1}-x_{2}|+|(t_{1}-t_{2})x_{1}-t_{2}(x_{2}-x_{1})|\\
 & \le & |x_{1}-x_{2}|+|x_{1}||t_{1}-t_{2}|+|t_{2}||x_{2}-x_{1}|\\
 & \le & |x_{1}-x_{2}|+|x_{1}||t_{1}-t_{2}|+|x_{2}-x_{1}|=2|x_{1}-x_{2}|+|x_{1}||t_{1}-t_{2}|\\
 & \le & 2\eta|x_{1}||t_{1}-t_{2}|+|x_{1}||t_{1}-t_{2}|\le(2\eta+1)|x_{1}||t_{1}-t_{2}|\\
 & \le & (2\eta+1)|x_{1}||w_{1}-w_{2}|.\end{eqnarray*}

\end{proof}
\begin{rem}
By remark \ref{r2/r1 grande} we may assume $\delta_0,\tilde{\delta}_0\le\frac{1}{2}$. Then Lemma \ref{bu1} holds for points $w_j$ in $V$ or $\widetilde{V}$.

\end{rem}
\begin{lem}\label{bu3}
\label{lipschitz eta} For $j=1,2$ let $w_j=(t_j,x_j)\in V$ and denote $h(w_j)=(\tilde{t}_j,\tilde{x}_j)$. Given $\eta>0$, there is a constant $M_{0}=M_{0}(\eta,m,M)$
such that, if $|x_{1}-x_{2}|\le\eta|x_1||t_{1}-t_{2}|$ and $|\tilde{x}_{1}-\tilde{x}_{2}|\le\eta|\tilde{x}_1||\tilde{t}_1-\tilde{t}_2|$,
then \[
1/M_{0}|w_{1}-w_{2}|\le|h(w_{1})-h(w_{2})|\le M_{0}|w_{1}-w_{2}|.\]
\end{lem}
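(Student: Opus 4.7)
The plan is to transfer the estimate from the ambient bilipschitz map $\mathfrak{h}$ on $(\mathbb{C}^2,0)$ to the lifted map $h$ via the two inequalities provided by Lemma \ref{bu1}. Writing $z_j=\pi(w_j)=(x_j,t_jx_j)$ and $\tilde z_j=\pi(h(w_j))=(\tilde x_j,\tilde t_j\tilde x_j)$, the identity $\pi\circ h=\mathfrak{h}\circ\pi$ gives $\tilde z_j=\mathfrak{h}(z_j)$, so that the bilipschitz inequality
\[
m|z_1-z_2|\le|\tilde z_1-\tilde z_2|\le M|z_1-z_2|
\]
is immediately available. The question becomes how to convert this into an inequality between $|w_1-w_2|$ and $|h(w_1)-h(w_2)|$.

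The key is that under the two extra hypotheses $|x_1-x_2|\le\eta|x_1||t_1-t_2|$ and $|\tilde x_1-\tilde x_2|\le\eta|\tilde x_1|\,|\tilde t_1-\tilde t_2|$, Lemma \ref{bu1}(2) applies on both sides and yields
\[
|z_1-z_2|\le(2\eta+1)|x_1|\,|w_1-w_2|,\qquad |\tilde z_1-\tilde z_2|\le(2\eta+1)|\tilde x_1|\,|h(w_1)-h(w_2)|,
\]
while Lemma \ref{bu1}(1) in the converse direction gives $|w_1-w_2|\le|z_1-z_2|/|x_1|$ and $|h(w_1)-h(w_2)|\le|\tilde z_1-\tilde z_2|/|\tilde x_1|$. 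Chaining these with the bilipschitz inequality for $\mathfrak{h}$ produces ratios of the form $|x_1|/|\tilde x_1|$ and $|\tilde x_1|/|x_1|$.

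To bound those ratios, I would use that $|z_1|=(1+|t_1|)|x_1|$ and $|\tilde z_1|=(1+|\tilde t_1|)|\tilde x_1|$, and since the remark before the lemma lets us assume $|t_1|,|\tilde t_1|\le 1/2$, each of $|x_1|$ and $|\tilde x_1|$ is comparable to the corresponding $|z_1|$, $|\tilde z_1|$ up to a factor between $1$ and $3/2$. Combining with $m|z_1|\le|\tilde z_1|\le M|z_1|$ (which follows from $\mathfrak{h}(0)=0$ and the bilipschitz property applied at the pair $(z_1,0)$) gives constants depending only on $m$ and $M$ controlling $|x_1|/|\tilde x_1|$ from both sides.

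Assembling the four inequalities, I expect to conclude with $M_0=\frac{3M(2\eta+1)}{2m}$ (or some similar explicit constant in $\eta$, $m$, $M$). There is no real obstacle here; the only subtle point is carefully tracking which of $|x_1|$ and $|x_2|$ (respectively $|\tilde x_1|$, $|\tilde x_2|$) appears on each side when invoking Lemma \ref{bu1}(1), since the hypothesis is written asymmetrically in terms of $|x_1|$ and $|\tilde x_1|$. This is purely bookkeeping and is handled by always using the point labelled $1$ as the reference.
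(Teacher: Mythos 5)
Your proposal is correct and follows essentially the same route as the paper: both chain Lemma \ref{bu1}(1) on one side with Lemma \ref{bu1}(2) on the other through the bilipschitz inequality for $\mathfrak{h}$, and control the ratio $|x_1|/|\tilde x_1|$ via $|z_1|=(1+|t_1|)|x_1|$, $|\tilde z_1|=(1+|\tilde t_1|)|\tilde x_1|$ and $m|z_1|\le|\mathfrak{h}(z_1)|$ (the paper phrases this as $\tilde{\mathfrak{x}}\ge\frac{m}{2}\mathfrak{x}$, yielding $M_0=\frac{2M}{m}(2\eta+1)$, versus your $\frac{3M}{2m}(2\eta+1)$ — an immaterial difference in constants).
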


\begin{proof}
Put $z_j=\pi(w_j)$.
Since \[
|\mathfrak{h}(z_{1})|=|\tilde{x}_{1}|+|\tilde{t}_{1}||\tilde{x}_{1}|\le2|\tilde{x}_{1}|,\]
then \[
|\tilde{x}_{1}|\ge\frac{1}{2}|\mathfrak{h}(z_{1})|\ge\frac{m}{2}|z_{1}|\ge\frac{m}{2}|x_{1}|,\]
hence ${\tilde{{\mathfrak{x}}}}:=\mbox{Max}\{|\tilde{x}_1|,|\tilde{x}_2|\}\ge\frac{m}{2}|x_{1}|$. In the same way ${\tilde{{\mathfrak{x}}}}\ge\frac{m}{2}|x_{2}|$
and therefore \[
{\tilde{{\mathfrak{x}}}}\ge\frac{m}{2}{{\mathfrak{x}}},\]  where ${{\mathfrak{x}}}=\mbox{Max}\{|x_{1}|,|x_{2}|\}$.
Then by Lemma \ref{bu1}  we have\begin{eqnarray*}
|h(w_{1})-h(w_{2})| & \le & \frac{1}{{\tilde{{\mathfrak{x}}}}}|\mathfrak{h}(z_{1})-\mathfrak{h}(z_{2})|\le\frac{2}{m{{\mathfrak{x}}}}|\mathfrak{h}(z_{1})-\mathfrak{h}(z_{2})|\\
 & \le & \frac{2M}{m{{\mathfrak{x}}}}|z_{1}-z_{2}|\le\frac{2M}{m{{\mathfrak{x}}}}(2\eta+1)|x_{1}||w_{1}-w_{2}|\\
 & \le & \frac{2M}{m}(2\eta+1)|w_{1}-w_{2}|,\end{eqnarray*}
so we obtain the right hand inequality with $M_{0}=\frac{2M}{m}(2\eta+1)$.
The other hand is similar.
\end{proof}

Let $Q$ and $\widetilde{Q}$ be as in Section \ref{distorcion}. Since $|Q|$ and $|\widetilde{Q}|$ are bounded it is easy to see the following fact:

\begin{fact}\label{hecho}
There exist $\eta>0$ with the following property:  if $\Gamma(s)=(t(s),x(s))$ is a differentiable path contained in a leaf of $\mathcal{F}|_{V}$ or $\widetilde{\mathcal{F}}|_{\widetilde{V}}$, then $$|x'(s)|\leq\frac{\eta}{2}|x(s)||t'(s)|.$$
\end{fact}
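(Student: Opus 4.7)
The plan is to exploit that $\Gamma$ being in a leaf means $\Gamma'(s)$ is $\mathbb{C}$-linearly proportional to the generator $Z$ (or $\widetilde{Z}$). This gives a closed-form expression for the ratio $x'/t'$ in terms of $x$, $t$, and $Q$, and then one only needs to bound $|Q|/|t|$ uniformly on $V$ (respectively $|\widetilde{Q}|/|t|$ on $\widetilde{V}$).

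First I would fix a leaf-contained differentiable path $\Gamma(s)=(t(s),x(s))$ in $V$. The complex tangent line to the leaf at $\Gamma(s)$ is spanned (over $\mathbb{R}$) by $Z(\Gamma(s))$ and $iZ(\Gamma(s))$, so there exists a continuous $\lambda(s)\in\mathbb{C}$ with
\[
\Gamma'(s)=\lambda(s)\,Z(\Gamma(s)).
\]
Reading off components from the formula $Z=t\,\partial_t+xQ\,\partial_x$ gives
\[
t'(s)=\lambda(s)\,t(s),\qquad x'(s)=\lambda(s)\,x(s)\,Q(\Gamma(s)).
\]
Since $V=\{r_1\le|t|\le r_2,\,0<|x|\le\delta_0\}$ we have $t(s)\neq 0$, so $\lambda(s)=t'(s)/t(s)$ and therefore
\[
x'(s)=\frac{Q(\Gamma(s))\,x(s)}{t(s)}\,t'(s),
\]
which on taking moduli yields $|x'(s)|=\dfrac{|Q(\Gamma(s))|}{|t(s)|}\,|x(s)|\,|t'(s)|$.

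Now I would bound the coefficient. Because $Q$ is holomorphic on $\overline{V}$ (hence continuous on the compact set $\overline{V}$), there is a constant $C>0$ with $|Q|\le C$ on $\overline{V}$; moreover $|t(s)|\ge r_1$ on $V$. Hence
\[
|x'(s)|\le \frac{C}{r_{1}}\,|x(s)|\,|t'(s)|.
\]
Running the identical argument with $\widetilde{Z}$, $\widetilde{Q}$ and $\overline{\widetilde{V}}$, and using that on $\widetilde{V}$ one has $|t|\ge r_1/L$, gives an analogous bound with a constant $\widetilde{C}L/r_1$. Setting
\[
\frac{\eta}{2}:=\max\!\left\{\frac{C}{r_{1}},\,\frac{\widetilde{C}L}{r_{1}}\right\}
\]
produces the required inequality simultaneously on $V$ and $\widetilde{V}$.

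The only subtlety, which is not really an obstacle, is the degenerate case $t'(s_0)=0$: from $t'=\lambda t$ and $t\neq 0$ it follows that $\lambda(s_0)=0$, hence $x'(s_0)=0$ as well, and the inequality is trivial at such points. So the computation above covers all $s$. No delicate estimate is needed beyond continuity of $Q,\widetilde{Q}$ on the closures of $V,\widetilde{V}$ and the lower bounds on $|t|$ built into the definitions of those sets.
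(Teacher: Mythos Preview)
Your argument is correct and is exactly the reasoning the paper has in mind: it states the Fact immediately after observing that $|Q|$ and $|\widetilde{Q}|$ are bounded, leaving the details you have written out as ``easy to see.'' Your computation $x'(s)=\dfrac{Q(\Gamma(s))}{t(s)}\,x(s)\,t'(s)$ together with the uniform bounds $|Q|\le C$, $|t|\ge r_1$ on $V$ (and analogously on $\widetilde{V}$) is precisely what is needed.
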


\subsection{Proof of Proposition \ref{lipschitz curva}}
Let $$0=s_{0}<\ldots<s_{n}=1$$ be a partition of $[0,1]$. Denote $\gamma(s_{j})=(t_{j},x_{j})$.
If the partition is fine enough we have that each segment $[t_{j-1,}t_{j}]$
is contained in $$\{t\in\mathbb{C},{\frac{1}{L}}r_{1}\leq|t|\le {L}r_{2}\}.$$
Fix $j$ and let $\Gamma=(\Gamma_1,\Gamma_2):[0,1]\rightarrow V$
be a path tangent to $\mathcal{F}$ with $\Gamma(0)=\gamma(s_{j-1})$,
$\Gamma(1)=\gamma(s_{j})$ and such that $\Gamma_1(s)=(1-s)t_{j-1}+st_{j}$.
Then by the fact above we have
$$|x_{j}-x_{j-1}|=|\Gamma_2(1)-\Gamma_2(0)|\leq\int_{0}^{1}|\Gamma_2'(s)|ds\le\int_{0}^{1}\frac{\eta}{2}|\Gamma_2(s)||\Gamma_1'(s)|ds.$$

We can assume the partition to be  small enough such that $|\Gamma_2(s)|\le 2|x_j|$. Then
$$|x_{j}-x_{j-1}|\le\int_{0}^{1}\frac{\eta}{2}|\Gamma_2(s)||\Gamma_1'(s)|ds\le \eta |x_j|\int_{0}^{1}|\Gamma_1'(s)|ds,$$ that is:
\begin{equation}
|x_{j}-x_{j-1}|\le \eta|x_j||t_{j}-t_{j-1}|.\label{pendiente1}\end{equation}
Denote $h(\gamma(s_{j}))=(\tilde{t}_{j},\tilde{x}_{j}).$ Provided the partition
is fine enough and by working with the path $h\circ\gamma$ we prove
as above that \begin{equation}
|\tilde{x}_{j}-	\tilde{x}_{j-1}|\le \eta |\tilde{x}_j||\tilde{t}_{j}-\tilde{t}_{j-1}|.\label{pendiente2}\end{equation}
From equations (\ref{pendiente1}) and (\ref{pendiente2}) and Lemma
\ref{lipschitz eta} we obtain \[
\frac{1}{M_{0}}|\gamma(t_{j})-\gamma(t_{j-1})|\le|h\circ\gamma(t_{j})-h\circ\gamma(t_{j-1})|\leq M_{0}|\gamma(t_{j})-\gamma(t_{j-1})|\]
 and the proposition follows.

\section{\label{sec:Proof-of-Proposition}Proof of Proposition $\ref{creciente-decreciente}$}
 As a direct consequence of Fact \ref{hecho}, if $(t(s),x(s))$ is a path in a leaf of $\mathcal{F}|_{V}$ or $\widetilde{\mathcal{F}}|_{\widetilde{V}}$, we have  that  $|x'(s)|\le {\eta}|t'(s)|$. We start with the following proposition.
\begin{prop}
\label{control angular}There exist constants $a_{1},a_{2},b_{1},b_{2}>0$
such that \[
a_{1}|\theta|-b_{1}\leq|\tilde{\theta}(w,T)|\leq a_{2}|\theta|+b_{2}.\]
\end{prop}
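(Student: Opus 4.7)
The plan is to convert the bilipschitz estimate along leaves from Proposition \ref{lipschitz curva} into a length comparison between the straight flow path $\gamma_w^T(s)=\phi^{sT}(w)$ and its image $h\circ\gamma_w^T$, and then combine this with Equation \ref{control en tau} (which bounds the real part $\tilde\tau$) to deduce the bound on the imaginary part $\tilde\theta$.

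First I observe that $|Z|$ is bounded above and below by positive constants on $\overline V$: since $Z=t\partial_t+xQ\partial_x$, we have $|Z|=|t|+|xQ|$ with $|t|\in[r_1,r_2]$ and $|Q|$ bounded; an analogous statement holds for $|\widetilde Z|$ on $\overline{\widetilde V}$. Consequently the flow path $\gamma_w^T$ has length $l(\gamma_w^T)=|T|\int_0^1|Z(\gamma_w^T(s))|\,ds\asymp|T|$, and the constraint $\gamma_w^T\subset V$ forces $|\tau|\le\ln(r_2/r_1)$, giving $l(\gamma_w^T)\le c_1(|\theta|+1)$. Writing $h\circ\gamma_w^T(s)=\tilde\phi^{\sigma(s)}(h(w))$ with $\sigma(0)=0$ and $\sigma(1)=\widetilde T$, the chain rule gives $l(h\circ\gamma_w^T)=\int_0^1|\sigma'(s)|\,|\widetilde Z(h\circ\gamma_w^T(s))|\,ds\asymp\int_0^1|\sigma'(s)|\,ds$, the Euclidean length of $\sigma$ in $\mathbb{C}$.

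For the upper bound I would invoke Proposition \ref{lipschitz curva}, which gives $l(h\circ\gamma_w^T)\le M_0 l(\gamma_w^T)$; hence the Euclidean length of $\sigma$ is at most $c_2(|\theta|+1)$. Since $\sigma$ joins $0$ to $\widetilde T$, its length is at least $|\widetilde T|\ge|\tilde\theta|$, yielding $|\tilde\theta|\le a_2|\theta|+b_2$. For the lower bound I would apply the symmetric argument to $\mathfrak h^{-1}$, which is also bilipschitz, so that Proposition \ref{lipschitz curva} applies with the roles of the foliations reversed. Running the same chain of estimates backwards, using the straight flow path $s\mapsto\tilde\phi^{s\widetilde T}(h(w))$ in the target leaf and its pullback by $h^{-1}$ (which connects $w$ to $\phi^T(w)$), produces $|T|\le c_3|\widetilde T|$. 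Combining with $|\widetilde T|\le|\tilde\tau|+|\tilde\theta|$ and $|\tilde\tau|\le 2\ln L$ from Equation \ref{control en tau} gives $|\theta|\le|T|\le c_3(|\tilde\theta|+2\ln L)$, which rearranges to $|\tilde\theta|\ge a_1|\theta|-b_1$.

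The hard part is the lower-bound step: I need the straight flow path $\tilde\phi^{s\widetilde T}(h(w))$ to remain inside $\widetilde V$ so that the inverse length comparison applies. Its $t$-coordinate $\tilde t\,e^{s\widetilde T}$ stays in $\{r_1/L\le|t|\le Lr_2\}$ automatically, because $|\tilde t\,e^{s\widetilde T}|=|\tilde t|e^{s\tilde\tau}$ is monotone in $s$ and its endpoints lie in that annulus. The $x$-coordinate may a priori grow exponentially in $|\widetilde T|$, but this is controlled by taking $\tilde\delta_0$ sufficiently small (as permitted by Remark \ref{r2/r1 grande}), or alternatively by passing to the universal cover of the leaf and reading Proposition \ref{lipschitz curva} directly as the statement that $T\mapsto\widetilde T(w,T)$ is bilipschitz in the Euclidean metric.
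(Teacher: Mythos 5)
You correctly reproduce the paper's mechanism for the right-hand inequality: bound $l(\gamma_w^T)$ by a constant times $|\theta|+1$, transfer lengths with Proposition \ref{lipschitz curva}, and bound $l(h\circ\gamma_w^T)$ from below by a multiple of $|\tilde\theta|$. The paper gets that last lower bound from the winding of the $t$-coordinate of a rectifiable path homotopic to $h\circ\gamma_w^T$ (giving $\tfrac{r_1}{L}|\tilde\theta|-1$), while you get it from the time-lift $\sigma$ and $|\widetilde Z|\ge r_1/L$; these are interchangeable. Two small cautions: $\sigma$ is a priori only continuous and rectifiable, so you should not literally write $\sigma'(s)$ (the comparison $l(h\circ\gamma_w^T)\ge\inf|\widetilde Z|\cdot l_{\mathbb C}(\sigma)\ge\tfrac{r_1}{L}|\widetilde T|$ survives without differentiability), and Equation \ref{control en tau} bounds $|\tilde\tau-\tau|$ by $2\ln L$, not $|\tilde\tau|$ itself, which is harmless but should be stated correctly.

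The genuine gap is in the left-hand inequality, and it is exactly the point you flag and then fail to close: the mirrored argument for $h^{-1}$ must be run along the straight segment $s\mapsto\tilde\phi^{s\widetilde T}(h(w))$, and this requires that segment to stay in $\widetilde V$ (where $\widetilde Z=t\partial_t+x\widetilde Q\partial_x$ with $|\widetilde Q|$ bounded, and where Proposition \ref{lipschitz curva} applies to $h^{-1}$). Neither of your remedies works. Shrinking $\tilde\delta_0$ via Remark \ref{r2/r1 grande} does not help, because escape through $\{|x|=\tilde\delta_0\}$ under imaginary-time flow is relative: by Fact \ref{hecho} the only a priori control is growth of $|x|$ by a factor like $e^{\eta|\tilde\theta|}$, $|\tilde\theta|$ is unbounded, and there are points $h(w)$ with $|\tilde x|$ comparable to $\tilde\delta_0$; moreover $\tilde\delta_0$ is coupled to $\delta_0$ through Proposition \ref{control anular} and cannot be re-chosen after the quantifier ``for all $(w,T)$ with $\gamma_w^T\subset V$'' is fixed. (That this kind of imaginary-time control is delicate is precisely why the paper later needs Propositions \ref{lem6}--\ref{lem8}, where $|x|$ must be exponentially small compared with the amount of imaginary time.) Your second remedy is circular: Proposition \ref{lipschitz curva} compares lengths of a path and its image, so reading it as bilipschitzness of $T\mapsto\widetilde T(w,T)$ in the Euclidean metric requires, for the lower Lipschitz bound, a path in the target leaf inside $\widetilde V$ whose time-lift is (or has length comparable to) the segment $[0,\widetilde T]$ --- the time-domains involved need not be convex, and the universal cover cures multivaluedness, not non-convexity, so this is the same containment problem restated. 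There is also a period issue you skip: the $h^{-1}$-time of that segment is only $T$ up to a period $2k\pi i$ of $\phi_w$, which must be handled, e.g.\ via Lemma \ref{homologico}. For comparison, the paper's own treatment of this half is the terse ``perform the same arguments with $h^{-1}$'', so your instinct matches the intended route; but as written your proposal does not establish $a_1|\theta|-b_1\le|\tilde\theta|$: you would need either a justification that the straight target segment (for the pairs $(h(w),\widetilde T(w,T))$ actually arising) stays in the region where the estimates hold, or some substitute path in the target leaf inside $\widetilde V$ joining $h(w)$ to $\tilde\phi^{\widetilde T}(h(w))$ of length $O(|\tilde\theta|+1)$.
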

\begin{proof}
Fix $(w,T)$, $T=\tau+{\theta i}$ such that $\gamma(s)=\phi^{Ts}(w)$, $s\in[0,1]$
is contained in $V$. Put $\gamma(s)=(t(s),x(s))$ and observe that
$t(s)=r(s)e^{i\alpha(s)}$, where $r$ and $\alpha$ are differentiable
monotone functions. Then\begin{eqnarray*}
l(\gamma) & = & \int_{0}^{1}|t'(s)|ds+\int_{0}^{1}|x'(s)|ds\\
& \le & \int_{0}^{1}|t'(s)|ds+\int_{0}^{1}\eta|t'(s)|ds\\
 & = & (1+\eta)\int_{0}^{1}|r'(s)e^{i\alpha(s)}+r(s)i\alpha'(s)e^{i\alpha(s)}|ds\\
 & \leq & (1+\eta)\int_{0}^{1}|r'|ds+(1+\eta)r_{2}\int_{0}^{1}|\alpha'|ds\\
 & \leq & (1+\eta)(r_{2}-r_{1})+(1+\eta)r_{2}|\theta|,\end{eqnarray*}
 since $r$ and $\alpha$ are monotone. By Proposition \ref{lipschitz curva}
we have\begin{equation}
l(h\circ\gamma)\leq M_{0}l(\gamma)\leq M_{0}(1+\eta)(r_{2}-r_{1})+M_{0}(1+\eta)r_{2}|\theta|.\label{eqlt}\end{equation}
On the other hand, take a differentiable path $\tilde{\gamma}(s)=(\tilde{r}(s)e^{i\tilde{\alpha}(s)},\tilde{x}(s))$
homotopic with fixed endpoints  to $h\circ\gamma(s)$ in $\widetilde{V}$ and such that
$l(h\circ\gamma)\ge l(\tilde{\gamma})-1$. Clearly $\tilde{\theta}(w,T)=\tilde{\alpha}(1)-\tilde{\alpha}(0)$.
Then \begin{eqnarray*}
l(h\circ\gamma) & \ge & l(\tilde{\gamma})-1=\int_{0}^{1}|\tilde{r}'e^{i\tilde{\alpha}}+\tilde{r}i\tilde{\alpha}'e^{i\tilde{\alpha}}|ds+\int_{0}^{1}|\tilde{x}'|ds-1\\
 & \ge & \int_{0}^{1}|\tilde{r}'+\tilde{r}i\tilde{\alpha}'|ds-1\ge\int_{0}^{1}|\tilde{r}\tilde{\alpha}'|-1\ge {\frac{1}{L}}r_{1}\int_{0}^{1}|\tilde{\alpha}'|ds-1\\
 & \ge & {\frac{1}{L}}r_{1}|\tilde{\alpha}(1)-\tilde{\alpha}(0)|-1={\frac{1}{L}}r_{1}|\tilde{\theta}|-1\end{eqnarray*}
and together with equation \ref{eqlt} we obtain \[
|\tilde{\theta}|\le L\frac{M_{0}(1+\eta)r_{2}}{{}r_{1}}|\theta|+L\frac{M_{0}(1+\eta)(r_{2}-r_{1})+1}{{}r_{1}},\]
 which give us the right inequality of the proposition. We can perform
the same arguments with $h^{-1}$ to obtain the left inequality.
\end{proof}
\noindent\textit{Proof of Proposition \ref{creciente-decreciente}.}
Fix a constant $c>0$ such that $a_{1}c-b_{1}>0$. From proposition
\ref{control angular} we have \[
0<a_{1}\theta-b_{1}\le|\tilde{\theta}(w,T)|\le a_{2}\theta+b_{2}\]
 whenever $\theta\ge c$. By connectedness we have two cases:
\begin{enumerate}
\item $0<a_{1}\theta-b_{1}\le\tilde{\theta}(w,T)\le a_{2}\theta+b_{2}$
whenever $\theta\ge c$.
\item $0<a_{1}\theta-b_{1}\le-\tilde{\theta}(w,T)\le a_{2}\theta+b_{2}$
whenever $\theta\ge c$.
\end{enumerate}
In the same way we have the following possibilities:
\begin{enumerate}
\item[(1')] $a_{2}\theta-b_{2}\le\tilde{\theta}(w,T)\le a_{1}\theta+b_{1}<0$
whenever $\theta\le-c$.
\item[(2')] $a_{2}\theta-b_{2}\le-\tilde{\theta}(w,T)\le a_{1}\theta+b_{1}<0$
whenever $\theta\le-c$.
\end{enumerate}
Suppose items 1 and 2' holds. Then we may find $w_{0}\in V$ and $c_{1},c_{2}\ge {c}$
such that \begin{equation}
\tilde{\theta}(w_{0},-c_{1}i)=\tilde{\theta}(w_{0},c_{2}i).\label{cero}\end{equation}
Denote $w_{1}=\phi^{-c_{1}i}(w_{0})$. Then, from equation \ref{cero}
it is easy to see that $$\tilde{\theta}(w_{1},(c_{1}+c_{2})i)=0,$$
which contradicts Proposition \ref{control angular}. In the same
way we see that items 2 and 1' does not simultaneously hold. Suppose
that we have 1 and 1'. The other case is similar. Define $f_{1}(\theta)=a_{2}\theta-b_{2}$,
$f_{2}(\theta)=a_{1}\theta+b_{1}$ for $\theta\le-c$, $f_{1}(\theta)=a_{1}\theta-b_{1}$,
$f_{2}(\theta)=a_{2}\theta+b_{2}$ for $\theta\ge c$ and linearly
extend $f_{1}$ and $f_{2}$ to $[-c,c]$. Clearly $f_{1}\le\tilde{\theta}\le f_{2}$
on $|\theta|\ge c$. Since $\tilde{\theta}$ is bounded on $|\theta|\le c$,
we can put $\vartheta_{1}=f_{1}-C$, $\vartheta_{2}=f_{2}+C$ for
suitable $C>0$ to obtain $\vartheta_{1}\le\tilde{\theta}\le\vartheta_{2}$.\qed

\section{Homological Compatibility}\label{homological compatibility}

In this section we describe the map $h:V\rightarrow \widetilde{V}$ at homology level  and discard any homological obstruction to perform the constructions in the rest of the paper. Precisely, the results of this section are used in the proofs of Proposition \ref{regularizado} and Proposition \ref{redressing}.  Note that Proposition \ref{homologico basico} below is a special version of a kind of results previously obtained in  \cite{MM2} (Theorem 6.2.1) and \cite{rosas3} (section 5).

\begin{prop}
Let $(a,b)\in V$, $(\tilde{a},\tilde{b})\in\widetilde{V}$ and consider
the loops $\alpha,\beta:[0,1]\rightarrow V$, $\tilde{\alpha},\tilde{\beta}:[0,1]\rightarrow\widetilde{V}$
given by $\alpha(s)=(ae^{s2\pi i},b)$, $\beta=(a,be^{s2\pi i})$,
$\tilde{\alpha}(s)=(\tilde{a}e^{s2\pi i},\tilde{b})$ and $\tilde{\beta}(s)=(\tilde{a},\tilde{b}e^{s2\pi i})$.
Put $\xi=1$ or $-1$ according to $h$ preserves or reverses the
natural orientation of the leaves. Then, in the first homology group
of $\widetilde{V}$ we have
\[
[h(\alpha)]=\xi\tilde{\alpha}\mbox{ and }[h(\beta)]=\xi\tilde{\beta}.
\]
\label{homologico basico}\end{prop}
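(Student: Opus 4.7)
Both $V$ and $\widetilde{V}$ are homotopy equivalent to the $2$-torus, each being the product of an annulus in $t$ by a punctured disk in $x$; hence $H_{1}(V)\cong H_{1}(\widetilde{V})\cong \mathbb{Z}^{2}$, with basis $[\alpha],[\beta]$ on the left and $[\tilde{\alpha}],[\tilde{\beta}]$ on the right. Since $h$ is a homeomorphism, $h_{*}$ lies in $GL_{2}(\mathbb{Z})$, and the claim is that its matrix in these bases is $\xi\cdot I$. Writing $h_{*}[\alpha]=n_{1}[\tilde{\alpha}]+n_{2}[\tilde{\beta}]$ and $h_{*}[\beta]=n_{3}[\tilde{\alpha}]+n_{4}[\tilde{\beta}]$, the four integers $n_{i}$ are precisely the winding numbers of the $\tilde{t}$- and $\tilde{x}$-components of $h(\alpha),h(\beta)$ around $0$, and the plan is to pin them down by two complementary arguments.

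To compute the $[\tilde{\alpha}]$-components $n_{1}$ and $n_{3}$, I would adjoin the annular piece of the exceptional divisor $E_{A}:=E\cap\{r_{1}\le|t|\le r_{2}\}$. The space $V\cup E_{A}$ deformation retracts onto $E_{A}\simeq S^{1}$, so $H_{1}(V\cup E_{A})\cong \mathbb{Z}$ is generated by $[\alpha]$, while the class $[\beta]$ is killed by the inclusion (the transverse disk $\{(a,x):|x|\le |b|\}\subset V\cup E_{A}$ witnesses the nullity, meeting $E_{A}$ transversally once). The condition $h(w)\to E$ as $w\to E$, together with the uniform control $\tfrac{1}{L}|t|\le|\tilde{t}|\le L|t|$ given by Proposition \ref{control anular}, allows $h$ to induce a well-defined map at the level of ends near $E$, and hence a map of the quotients $H_{1}(V\cup E_{A})\to H_{1}(\widetilde{V}\cup E_{\tilde{A}})$. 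As an endomorphism of $\mathbb{Z}$ this map is multiplication by $\pm 1$; the sign is identified with the leaf-orientation sign $\xi$ because it equals the degree of the map $h$ induces on the core circle $E_{A}$. Chasing the commutative diagram
\[
\begin{array}{ccc}
H_{1}(V) & \xrightarrow{\;h_{*}\;} & H_{1}(\widetilde{V})\\
\downarrow & & \downarrow\\
H_{1}(V\cup E_{A}) & \xrightarrow{\;\xi\;} & H_{1}(\widetilde{V}\cup E_{\tilde{A}})
\end{array}
\]
then yields $n_{1}=\xi$ and $n_{3}=0$.

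The remaining components $n_{2}$ and $n_{4}$ are obtained by the dual argument via linking with $E$: the loop $\beta$ links $E$ once (it bounds the transverse disk through $E$), while $\alpha$ has linking number zero. Because $h$ is orientation-preserving as a homeomorphism between neighborhoods in $\mathbb{C}^{2}=\mathbb{R}^{4}$ and respects the approach to $E$, it preserves the linking number with $E$ up to the sign of the orientation $h$ induces on a small $2$-disk normal to $E$; since the $\mathbb{R}^{4}$-orientation factors as leaf-orientation times transverse orientation and the former is $\xi$, the latter is also $\xi$. Under the identification of the linking number with the $[\tilde{\beta}]$-coefficient in $H_{1}(\widetilde{V})$, this gives $n_{4}=\xi$ and $n_{2}=0$, establishing $h_{*}=\xi\cdot I$.

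The main obstacle is making the diagram above precise, i.e., showing that the condition $h(w)\to E$ together with Proposition \ref{control anular} really does induce a map on the compactifications $V\cup E_{A}\to\widetilde{V}\cup E_{\tilde{A}}$ at the level of homology, and that its degree is indeed $\xi$. Without the bilipschitz control on $|\tilde{t}|$, the image of a small punctured transverse disk could accumulate on a complicated subset of $E$, which would obstruct this extension and could in principle produce arbitrary integer windings in either coordinate. Once this end-compatibility is nailed down, the rest is standard intersection and orientation bookkeeping.
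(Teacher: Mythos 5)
Your reduction to computing the matrix of $h_{*}$ in the bases $[\alpha],[\beta]$ and $[\tilde{\alpha}],[\tilde{\beta}]$ is fine (though note $h$ maps $V$ \emph{into} $\widetilde{V}$, not onto, so $h_{*}\in GL_{2}(\mathbb{Z})$ is not automatic), but both of your two key steps have genuine gaps, and they are essentially the same gap: $h$ does not extend continuously to the exceptional divisor, and every argument you propose implicitly uses such an extension. For the first step, the existence of an induced map $H_{1}(V\cup E_{A})\rightarrow H_{1}(\widetilde{V}\cup E_{\widetilde{A}})$ is not a consequence of $h(w)\rightarrow E$ and of the control $\tfrac{1}{L}|t|\le|\tilde{t}|\le L|t|$: those facts only say that the end of $V$ near $E$ is sent into the end of $\widetilde{V}$ near $E$, and the end itself still has $H_{1}\cong\mathbb{Z}^{2}$, so nothing is killed. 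Concretely, the quotient map $H_{1}(V)\rightarrow H_{1}(V\cup E_{A})$ kills exactly $\langle[\beta]\rangle$, so the dashed arrow in your diagram exists precisely when $h_{*}[\beta]$ has vanishing $[\tilde{\alpha}]$-component, i.e. when $n_{3}=0$ --- which is part of what you are trying to prove. The argument is circular as stated; the image under $h$ of a small meridian of $E$ could a priori wind in the $\tilde{t}$-direction, and nothing you invoke rules this out. For the second step, ``$h$ preserves the linking number with $E$'' has no justification: the usual invariance of linking/intersection numbers requires the homeomorphism to be defined on a neighborhood of the cycle one links with, and $h$ is not defined on $E$; your bounding disk $\{(a,x):|x|\le|b|\}$ meets $E$, so its image under $h$ is only a punctured disk whose closure may accumulate on a large subset of $E$, and the intersection count cannot be transported. (Worse, in a full deleted neighborhood of $E$ the complement is homotopy equivalent to $S^{3}$, so ``linking with $E$'' is not even detected by $H_{1}$ there; any such argument must be localized, and then it again runs into the non-extendability of $h$ across $E$.)

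The paper's proof is built precisely to circumvent these two obstructions, and it does so by different means. Instead of linking with $E$, it links with the separatrix: downstairs the original $\mathfrak{h}$ \emph{is} defined on a full neighborhood of $0$ and maps $S$ onto $\widetilde{S}$, so in $W^{*}=W\setminus(E\cup\mathcal{S})$ (whose $H_{1}\cong\mathbb{Z}$ by Alexander duality) the topological invariance of the intersection number legitimately gives $h(\partial D)=\xi\,\partial\widetilde{D}$; equisingularity provides the same integers $p,q$ (and later $k$) expressing $\alpha,\beta,\partial\mathcal{S}$ in terms of $\partial D$ on both sides, yielding the arithmetic relations that pin down the $[\tilde{\alpha}]$-components. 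The $[\tilde{\beta}]$-components are then obtained not by a linking argument but by the Assertion that $h(\mathfrak{V}_{\epsilon})\subset\widetilde{\mathfrak{V}}$ for small $\epsilon$ --- proved by a connectedness argument using Corollary \ref{control anular 2} applied to $h^{-1}$ --- which makes $\tilde{\alpha}$ and $h(\alpha)$ die in $H_{1}(\widetilde{\mathfrak{V}})$, together with $h(\partial\mathcal{S})=\xi\,\partial\widetilde{\mathcal{S}}$. If you want to salvage your route, you would have to supply substitutes for exactly these ingredients: some invariantly defined cycle over which the equivalence genuinely extends (the separatrix, not $E$), and a quantitative confinement statement like the Assertion; your end-compatibility claim is the conclusion, not an input.
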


Let $w\in V$ and consider the map $\phi_{w}$ defined by $\phi_{w}(T)=\phi^{T}(w)$
and whose domain is the connected component of $0\in\mathbb{C}$ of
the set where $\phi^{T}(w)$ is defined. Clearly we have to cases:
\begin{enumerate}
\item The map $\phi_{w}$ is injective, or
\item There exists $k>0$ such that $\phi_{w}$ has period $2k\pi i$. In
this case we say that $w$ has period $2k\pi i$.
\end{enumerate}
In the same way we define the map $\widetilde{\phi}_{w}$ for any
$w\in\widetilde{V}$. Observe that $\widetilde{T}(w,T)$ is defined
if $T$ is in the domain of $\phi_{w}$.

\begin{lem}
\label{homologico}Let $\xi$ as in Proposition \ref{homologico basico}. Then ${w}$
has period $2k\pi i$ if and only if ${h(w)}$ has
period $\xi2k\pi i$. Moreover, in this case \[
\widetilde{T}(w,T+2k\pi i)=\widetilde{T}(w,T)+\xi2k\pi i\]
 for any $T$ in the domain of $\phi_{w}$.
\end{lem}
\begin{proof}
This is an easy application of Proposition \ref{homologico basico}.
\end{proof}

\subsection{Proof of Proposition \ref{homologico basico}. }
For some $m_{1},n_{1},m_{2},n_{2}\in\mathbb{Z}$ we have
\begin{equation}
h(\alpha)=m_{1}\tilde{\alpha}+n_{1}\tilde{\beta}\mbox{ and }h(\beta)=m_{2}\tilde{\alpha}+n_{2}\tilde{\beta}\mbox{ in }H_{1}(\widetilde{V}).\label{ecuacion 1}
\end{equation}
 Take neighborhoods $W$ and $\widetilde{W}$ of the exceptional divisor
$E=\pi^{-1}(0)$ with the following properties:
\begin{enumerate}
\item $W$ contains $V$
\item $W\cap \mathcal{S}$ is closed in $W$ and it is homeomorphic to a disc
\item $h(W\backslash E)=\widetilde{W}\backslash E$
\item $\pi(W)$ and $\pi(\widetilde{W})$ are homeomorphic to balls.
\end{enumerate}
Let $S=\pi(W\cap \mathcal{S}$) and $\widetilde{S}=\mathfrak{h}(S)$.
Since $\pi(W)$ is homeomorphic to $\mathbb{C}^{2}$ and $S$
is closed in $\pi(W)$ and homeomorphic to $\mathbb{C}$, we have
by Alexander's duality that $H_{1}(\pi(W)\backslash S)\simeq\mathbb{Z}$.
Then, since $W^{*}=W\backslash(E\cup \mathcal{S})$ is homeomorphic to $\pi(W)\backslash S$
we have $H_{1}(W^{*})\simeq\mathbb{Z}$. In the same way, if $\widetilde{W}^{*}=\widetilde{W}\backslash(E\cup\widetilde{\mathcal{S}})$
we have $H_{1}(\widetilde{W}^{*})\simeq\mathbb{Z}$. Let $D\subset W$
and $\widetilde{D}\subset\widetilde{W}$ be small complex discs transverse
to $\mathcal{S}$ and $\widetilde{\mathcal{S}}$ respectively. If $D$ and $\widetilde{D}$
are small enough we have that $\partial D$ and $\partial\widetilde{D}$
are generators of $H_{1}(W^{*})$ and $H_{1}(\widetilde{W}^{*})$
respectively. Since $\mathfrak{h}$ preserves orientation it follows
from the topological invariance of the intersection number %
\footnote{See \cite{dold} p. 200.%
} that
\begin{equation}
h(\partial D)=\xi\partial\widetilde{D}\mbox{ in }H_{1}(\widetilde{W}^{*}).\label{ecuacion intersection number}
\end{equation}
Without loss of generality we can assume that $\tilde{\alpha}$ and
$\tilde{\beta}$ are contained in a set $\widetilde{V}_{\tilde{\epsilon}}=\{(t,x)\in\widetilde{V}:|x|\le\tilde{\epsilon}\}$
such that $\widetilde{V}_{\tilde{\epsilon}}\subset\widetilde{W}^{*}$.
Then we can write equations \ref{ecuacion 1} in $H_{1}(\widetilde{V}_{\tilde{\epsilon}})$
and consequently we have
\begin{equation}
h(\alpha)=m_{1}\tilde{\alpha}+n_{1}\tilde{\beta}\mbox{ and }h(\beta)=m_{2}\tilde{\alpha}+n_{2}\tilde{\beta}\mbox{ in }H_{1}(\widetilde{W}^{*}).\label{ecuacion 1-1}
\end{equation}
Observe that $\mathfrak{h}$ is a topological equivalence between the curves $S$ and $\widetilde{S}$ and therefore the resolutions of these curves are isomorphic. Furthermore, by the main result of \cite{MM5} there exist a topological equivalence between $S$ and $\widetilde{S}$ which extends homeomorphically after resolution. Thus, we deduce  that there exist $p,q\in\mathbb{Z}$ such
that
\begin{align}
 & \alpha=p\partial D\mbox{ and }\beta=q\partial D\mbox{ in }H_{1}(W^{*}),\label{ecuacion equisingular}\\
 & \tilde{\alpha}=p\partial\widetilde{D}\mbox{ and }\tilde{\beta}=q\partial\widetilde{D}\mbox{ in }H_{1}(\widetilde{W}^{*}).\nonumber
\end{align}
 Then, since $h(W^{*})=\widetilde{W}^{*}$ from equations \ref{ecuacion intersection number},
\ref{ecuacion 1-1} and \ref{ecuacion equisingular} we obtain
\begin{align}
 & \xi p=m_{1}p+n_{1}q,\label{ecuacion aritmetica}\\
 & \xi q=m_{2}p+n_{2}q.\nonumber
\end{align}

Put
\[
{\mathfrak{V}}_{\epsilon}=\{(t,x):|t|\le r_{2},\:0<|x|\le\epsilon\}
\]
 for $\epsilon>0$ and
\[
\widetilde{{\mathfrak{V}}}=\{(t,x):|t|\le L_{2}r_{2},\:0<|x|\le\tilde{\delta}_{0}\}.
\]
\\

\noindent\emph{Assertion. If $\epsilon>0$ is taken small enough
$h({\mathfrak{V}}_{\epsilon})$ is contained in $\widetilde{{\mathfrak{V}}}$.}\\

Take $w\in {\mathfrak{V}}_{\epsilon}$. If $w\in V$ clearly we have $h(w)\in\widetilde{V}\subset\widetilde{{\mathfrak{V}}}$,
so we assume $w\notin V$ and in particular
\[
w\in {\mathfrak{V}}_{\epsilon}^{0}=\{(t,x)\in {\mathfrak{V}}_{\epsilon}:|t|<r_{2}\}.
\]
From Proposition \ref{control anular} we see that $w$ can be connected
by a path $\gamma$ in ${\mathfrak{V}}_{\epsilon}^{0}$ with a point $w'\in {\mathfrak{V}}_{\epsilon}^{0}$
such that $h(w')\in\widetilde{{\mathfrak{V}}}$. It follows from Corollary \ref{control anular 2}
that the set%
\footnote{Clearly we can assume $Lr_{2}\le\tilde{\rho}$.
}
\[
\Sigma=\{(t,x):|t|=Lr_{2},\,0<|x|\le\tilde{\delta}(Lr_{2})\}
\]
 is mapped by $h^{-1}$ outside ${\mathfrak{V}}_{\epsilon}^{0}$. In particular
$h(\gamma)$ does not meet $\Sigma$. Then, since $\epsilon$ small
implies $h(\gamma)$ close to the exceptional divisor, we deduce that
$h(\gamma)$ can not leave $\widetilde{{\mathfrak{V}}}$ and therefore $h(w)\in\widetilde{{\mathfrak{V}}}$
for $\epsilon$ small enough.\\

Without loss of generality we can assume $\alpha,\beta\subset {\mathfrak{V}}_{\epsilon}$
and since $\alpha=0$ in $H_{1}({\mathfrak{V}}_{\epsilon})$ the assertion above
implies
\begin{equation}
h(\alpha)=0\mbox{ in }H_{1}(\widetilde{{\mathfrak{V}}}).\label{ecuacion cero-1}
\end{equation}
Let $\partial \mathcal{S}$ and $\partial\widetilde{\mathcal{S}}$ be positively oriented
loops in $\mathcal{S}$ and $\widetilde{\mathcal{S}}$ such that $\pi(\partial \mathcal{S})$ and
$\pi(\partial\widetilde{\mathcal{S}})$ are generators of $H_{1}(S\backslash\{0\})$
and $H_{1}(\widetilde{S}\backslash\{0\})$ respectively.
Clearly we have
\begin{equation}
h(\partial \mathcal{S})=\xi\partial\widetilde{\mathcal{S}}\mbox{ in }H_{1}(\widetilde{{\mathfrak{V}}}).\label{ecuacion separatrices}
\end{equation}

Again, since the resolutions of $S$ and $\widetilde{S}$ are isomorphic there exist $k\in\mathbb{Z}$ such that
\begin{align*}
 & \partial \mathcal{S}=k\beta\mbox{ in }H_{1}({\mathfrak{V}}_{\epsilon}),\\
 & \partial\widetilde{\mathcal{S}}=k\tilde{\beta}\mbox{ in }H_{1}(\widetilde{{\mathfrak{V}}}).
\end{align*}

Then, since $h({\mathfrak{V}}_{\epsilon})\subset\widetilde{{\mathfrak{V}}}$ we have
\begin{align*}
h(\partial \mathcal{S}) & =kh(\beta)\mbox{ in }H_{1}(\widetilde{{\mathfrak{V}}}),\\
\partial\widetilde{\mathcal{S}} & =k\tilde{\beta}\mbox{ in }H_{1}(\widetilde{{\mathfrak{V}}})
\end{align*}
 and by \ref{ecuacion separatrices} we obtain
\begin{equation}
h(\beta)=\xi\tilde{\beta}\mbox{ in }H_{1}(\widetilde{{\mathfrak{V}}}).\label{betas}
\end{equation}
 Since $\widetilde{V}\subset\widetilde{{\mathfrak{V}}}$ we can write equations
\ref{ecuacion 1} in $H_{1}(\widetilde{{\mathfrak{V}}})$ and using that $\tilde{\alpha}=0$
in $H_{1}(\widetilde{{\mathfrak{V}}})$ we obtain:
\begin{equation}
h(\alpha)=n_{1}\tilde{\beta}\mbox{ and }h(\beta)=n_{2}\tilde{\beta}\mbox{ in }H_{1}(\widetilde{{\mathfrak{V}}}).\label{ecuacion 1-2}
\end{equation}
 Thus, from equations \ref{ecuacion cero-1}, \ref{betas} and \ref{ecuacion 1-2}
we obtain $n_{1}=0$ and $n_{2}=\xi$ and together with equation \ref{ecuacion aritmetica}
Proposition \ref{homologico basico} follows.

\section{\label{regularization}Regularization}
This section is devoted to prove Proposition \ref{regularizado}, which basically asserts that we can find a topological equivalence $\bar{h}$ mapping  $T$ into\footnote{ Note that $T$ and $\widetilde{T}$ denote now the solid tori defined in the end of Section \ref{itinerary} and are not the complex time.} $\widetilde{T}$, as we have announced in Section \ref{itinerary}. This proposition is the main construction of this work an is based on two key facts (Lemma \ref{lema empuja geodesica} and Proposition \ref{geodesica transversal}) which are proved in Section \ref{potencial theory}.

\begin{prop}
\label{regularizado} Close to the exceptional divisor there exists a topological equivalence $\bar{h}$  between $\mathcal{F}$ and $\widetilde{\mathcal{F}}$ with the following properties:
\begin{enumerate}
\item $\bar{h}=h$ outside $V$.
\item $\bar{h}$ maps  $V$ into $\widetilde{V}$.
\item \label{empuje limitado}On $V$ we have $\bar{h}(w)=\tilde{\phi}^{\overline{T}(w)}(h(w))$, where   $\overline{T}:V\mapsto\mathbb{C}$ is a continuous function such that $|\mbox{Im}(\overline{T})|$ is bounded by some constant $\tilde{\theta}_1>0$.
\item \label{r12}There exist $r_{12}\in(r_1,r_2)$ and $\delta_1\in(0,\delta_0)$ such that $\bar{h}$ maps the set $$T=\{(t,x)\in V:|t|=r_{12},
|x|\le\delta_1\}$$ into the set $$\widetilde{T}=\{(t,x)\in \widetilde{V}:|t|=\sqrt{{r_1}{r_2}}\}.$$
\end{enumerate}
\end{prop}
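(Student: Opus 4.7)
The plan is to construct $\bar{h}$ of the form $\bar{h}(w)=\tilde{\phi}^{\overline{T}(w)}(h(w))$ for $w\in V$, with $\bar{h}=h$ outside $V$, where $\overline{T}:V\rightarrow\mathbb{C}$ is a continuous function tending to $0$ as $w$ approaches $\partial V$. Because $\tilde{\phi}^{T}$ preserves the leaves of $\widetilde{\mathcal{F}}$, any such $\bar{h}$ automatically sends leaves of $\mathcal{F}$ to leaves of $\widetilde{\mathcal{F}}$; so the nontrivial issues are continuity across $\partial V$, the image inclusion $\bar{h}(V)\subset\widetilde{V}$, injectivity, and the boundedness of $|\mbox{Im}(\overline{T})|$. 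The constraint (\ref{r12}) forces, via $\tilde{\phi}_{1}^{T}(t,x)=te^{T}$, that on $T$ we must have $\mbox{Re}(\overline{T}(w))=\ln(\sqrt{r_{1}r_{2}}/|\tilde{t}(w)|)$, while the imaginary part is free. By Equation \ref{control en tau} this required real part lies in a bounded interval, and by Remark \ref{r2/r1 grande} we may choose $r_{12}$ close to $\sqrt{r_{1}r_{2}}$ so that the interval is as small as desired.

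To extend $\overline{T}$ from $T\cup\partial V$ to all of $V$ I would proceed leaf by leaf using the potential-theoretic tools announced in Section \ref{potencial theory}. Parameterizing each connected piece of a leaf of $\mathcal{F}|_{V}$ by its complex time $S$ on a domain in $\mathbb{C}$, the function $\overline{T}$ becomes a continuous map of $S$ that must vanish where the leaf meets $\partial V$ and equal the prescribed (real) value where the leaf meets $T$. Lemma \ref{lema empuja geodesica} and Proposition \ref{geodesica transversal} then provide the coherent continuous extension across the whole family of leaves, with $|\mbox{Im}(\overline{T})|$ uniformly bounded, which is exactly property \ref{empuje limitado}. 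Property (2) then follows because $|\tilde{\phi}_{1}^{\overline{T}(w)}(h(w))|=|\tilde{t}|\,e^{\mbox{Re}(\overline{T}(w))}$ remains in $[\frac{1}{L}r_{1},Lr_{2}]$ when $|\overline{T}|$ is small, and the second coordinate is controlled by the boundedness of $\widetilde{Q}$ on $\overline{\widetilde{V}}$.

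The main obstacle is injectivity of $\bar{h}$. On a leaf through a base point $w_{0}$, writing a point as $\phi^{S}(w_{0})$ and using $h(\phi^{S}(w_{0}))=\tilde{\phi}^{\widetilde{T}(w_{0},S)}(h(w_{0}))$, we obtain $\bar{h}(\phi^{S}(w_{0}))=\tilde{\phi}^{\widetilde{T}(w_{0},S)+\overline{T}(\phi^{S}(w_{0}))}(h(w_{0}))$; so leaf-restricted injectivity (modulo the period supplied by Lemma \ref{homologico} when the leaf is periodic) reduces to injectivity of $S\mapsto\widetilde{T}(w_{0},S)+\overline{T}(\phi^{S}(w_{0}))$. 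By Equation \ref{control en tau} the real part of $\widetilde{T}(w_{0},S)$ differs from $\mbox{Re}(S)$ by at most $2\ln L$, and by Proposition \ref{creciente-decreciente} the imaginary part is strictly monotone in $\mbox{Im}(S)$ up to an overall sign. Since $\overline{T}$ is uniformly bounded, the additive perturbation it introduces cannot destroy this monotonicity on sufficiently large scales; the remaining local injectivity and the coherence across leaves are precisely what the potential-theoretic construction of Section \ref{potencial theory} is designed to provide, and this is the delicate step where Lemma \ref{lema empuja geodesica} and Proposition \ref{geodesica transversal} do the real work.
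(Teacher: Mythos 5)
Your overall shape matches the paper's strategy: modify $h$ only by flowing along $\widetilde{\mathcal{F}}$, i.e. $\bar{h}(w)=\tilde{\phi}^{\overline{T}(w)}(h(w))$, prescribe $\mathrm{Re}(\overline{T})$ on $T$ so that the image lands on $\{|t|=\sqrt{r_1r_2}\}$, keep $\mathrm{Im}(\overline{T})$ bounded, and reduce injectivity to a statement about the time maps. But there is a genuine gap at the heart of the construction: you never say what $\overline{T}$ is, and the two results you invoke (Lemma \ref{lema empuja geodesica} and Proposition \ref{geodesica transversal}) are not ``extension'' or ``coherence'' devices --- they only acquire content after one makes the specific choice the paper makes. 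In the paper, $V'$ is swept by the flow segments $I_{\mathfrak{w}}=\{\phi^{s\ln\frac{r_2}{r_1}}(\mathfrak{w})\}$, $\mathfrak{w}\in\partial V_1$, and $\bar{h}(I_{\mathfrak{w}})$ is \emph{defined} to be $\tilde{\phi}^{\Gamma_{\mathfrak{w}}}(h(\mathfrak{w}))$, where $\Gamma_{\mathfrak{w}}$ is the Poincar\'e geodesic of the Jordan region $\widetilde{T}_{\mathfrak{w}}(R)$ joining $0$ to $a_{\mathfrak{w}}=\widetilde{T}_{\mathfrak{w}}(\ln\frac{r_2}{r_1})$, reparameterized so that its unique transversal intersection point $o_{\mathfrak{w}}$ with the line $\{\tau=c_{\mathfrak{w}}\}$, $c_{\mathfrak{w}}=\ln\frac{\sqrt{r_1r_2}}{|h_1(\mathfrak{w})|}$, is the image of the point of $I_{\mathfrak{w}}$ with $|t|=r_{12}$. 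Proposition \ref{geodesica transversal} exists precisely to guarantee that transversal intersection (hence item 4 and the continuous dependence of $o_{\mathfrak{w}}$ on $\mathfrak{w}$), item 3 comes for free because both $\gamma_{\mathfrak{w}}(f(s))$ and $\widetilde{T}(w,s\ln\frac{r_2}{r_1})$ lie in $\widetilde{R}$, and item 2 follows because every point of the geodesic is a value $\widetilde{T}(\mathfrak{w},T')$ with $T'\in R$, so $\tilde{\phi}^{\Gamma_{\mathfrak{w}}}(h(\mathfrak{w}))\subset h(V)\subset\widetilde{V}$. None of this is recoverable from the assertion that the potential-theoretic tools ``provide the coherent continuous extension.''

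The injectivity argument you sketch would not close. Boundedness of $\overline{T}$ plus monotonicity of $\tilde{\theta}$ ``on sufficiently large scales'' says nothing about two nearby segments $I_{\mathfrak{w}_1}$, $I_{\mathfrak{w}_2}$ with $\mathfrak{w}_2=\phi^{\alpha i}(\mathfrak{w}_1)$ and $\alpha$ small: the bound $\tilde{\theta}_1$ on $\mathrm{Im}(\overline{T})$ is a fixed constant, not a small quantity, so an arbitrary perturbation of that size can perfectly well make the two image arcs cross, and this small-scale regime is exactly where the difficulty lies. The paper proves disjointness of $\bar{h}(I_{\mathfrak{w}_1})$ and $\bar{h}(I_{\mathfrak{w}_2})$ by exploiting that the image arcs are geodesics: it applies Lemma \ref{lema empuja geodesica} to the nested Jordan regions $\widetilde{T}_{\mathfrak{w}_1}(R)\subsetneq\widetilde{T}_{\mathfrak{w}_1}(\mathcal{R})$ and $\widetilde{T}_{\mathfrak{w}_1}(R_{\alpha})\subsetneq\widetilde{T}_{\mathfrak{w}_1}(\mathcal{R})$, together with the identity $\widetilde{T}_{\mathfrak{w}_1}(\alpha i+T)=\widetilde{T}_{\mathfrak{w}_1}(\alpha i)+\widetilde{T}_{\mathfrak{w}_2}(T)$, which makes translation by $\widetilde{T}_{\mathfrak{w}_1}(\alpha i)$ an isometry carrying $\Gamma_{\mathfrak{w}_2}$ onto the geodesic of $\widetilde{T}_{\mathfrak{w}_1}(R_{\alpha})$; this yields $\Gamma_{\mathfrak{w}_1}\cap\bigl(\widetilde{T}_{\mathfrak{w}_1}(\alpha i)+\Gamma_{\mathfrak{w}_2}\bigr)=\emptyset$, and the periodic-leaf case is then settled with Lemma \ref{homologico}. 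If the arcs $\bar{h}(I_{\mathfrak{w}})$ are not chosen to be these geodesics (or something with an equivalent comparison property), Lemma \ref{lema empuja geodesica} gives you nothing, and no argument in your proposal replaces it.
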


From now on we assume the first case in Proposition $\ref{creciente-decreciente}$,
that is: \begin{equation}
\vartheta_{1}(\theta)\le\tilde{\theta}(w,\tau+{\theta i})\le\vartheta_{2}(\theta)\mbox{ for all }(w,\tau+{\theta i})\mbox{ such that }\tilde{\theta}\mbox{ is defined}.\label{creciente}\end{equation}
For the second case the arguments works in the same way. Then, given
$\tilde{\theta}_{0}>0$ we can take $\theta_{0}>0$ and $\tilde{\theta}_{1}>0$
such that \begin{equation}
\tilde{\theta}(w,-\theta_{0})\leq-\tilde{\theta}_{0}\le\tilde{\theta}_{0}\le\tilde{\theta}(w,\theta_{0})\label{rectangulo grande}\end{equation}
and \begin{equation}
\tilde{\theta}(w,[-\theta_{0},\theta_{0}])\subset[-\tilde{\theta}_{1},\tilde{\theta}_{1}],\end{equation}
whenever $\tilde{\theta}(w,-\theta_{0})$ and $\tilde{\theta}(w,\theta_{0})$
are defined. We fix the constants $\tilde{\theta}_{0}$, $\tilde{\theta}_{1}$
and $\theta_{0}$ although we will specify later how $\tilde{\theta}_{0}$
is chosen.

Set \[
R=\{\tau+{\theta i}\in\mathbb{C}:0\le\tau\le\ln\frac{r_{2}}{r_{1}},|\theta|\le\theta_{0}\}\]
 and \[
\widetilde{R}=\{\tau+{\theta i}\in\mathbb{C}:-2\ln L\le\tau\le\ln\frac{r_{2}}{r_{1}}+2\ln L,|\theta|\le\tilde{\theta}_{1}\},\] where $L$ is as in Section \ref{itinerary} (see Proposition \ref{control anular}, Remark \ref{remark1} and Equation \ref{control en t}).
Define the set $$\partial V_{1}:=\{|t|=r_{1},0<|x|\le\delta_{0}'\}$$ with $\delta_0'>0$.\\

\noindent\emph{Assertion.  We can take  $\delta_0'$ small enough such that for all ${\mathfrak{w}}\in\partial V_{1}$ we have that $\widetilde{T}({\mathfrak{w}},T)$ is defined for all $T\in R$.}\\

 Let $\mathfrak{w}=(\mathfrak{t},\mathfrak{x})\in\partial V_1$. Recall that $\phi^T(\mathfrak{w})=(\phi_1^T(\mathfrak{w}),\phi_2^T(\mathfrak{w}))$ with $\phi_1^T(\mathfrak{w})=\mathfrak{t}e^{T}$ (see Section \ref{distorcion}).
 Fix $T\in R$. Take $s^+\in(0,1]$  maximal such that $\phi^{sT}(\mathfrak{w})$ is well defined and contained in $V$ for all $s\in[0,s^+)$.  It is easy to see that $$r_1\le |\phi_1^{sT}(\mathfrak{w})|\le r_2 \mbox{ for all } s\in[0,1].$$ Then, $\phi^{sT}(\mathfrak{w})$ is well defined and contained in $V$ for all $s\in[0,s^+)$  whenever $$f(s):=\phi_2^{sT}(\mathfrak{w})$$ is well defined and satisfies $|f(s)|\le\delta_0$ for all $s\in[0,s^+)$. It follows from the flow property that  $f$ satisfies the differential equation
 \[  f'= TQ(\mathfrak{t}e^{sT},f)f,\; f(0)=\mathfrak{x}, \]
 where the function $Q$ is as in Section \ref{distorcion}. Since $Q$ is bounded on $\overline{V}$ we find a constant $K>0$ such that $|f'(s)|\le K |f(s)|$ for all $s\in[0,s^+)$. From this we obtain the inequality  $|f(s)|\le |\mathfrak{x}|e^{Ks}$ for all $s\in[0,s^+)$ and therefore $|f(s)|\le |\mathfrak{x}|e^{K}$ for all $s\in[0,s^+)$. Then, by taking $\delta_0'$ small enough we can assume that $$|f(s)|\le \delta_0/2 \mbox{ for all } s\in[0,s^+).$$ Suppose that $s^+<1$. Then, from the elementary theory of ordinary differential equations there is a sequence $(s_n)\subset [0,s^+)$, $s_n\rightarrow s^+$ such that $|f(s_n)|\rightarrow\delta_0$, which is a contradiction. Therefore $\phi^{sT}(\mathfrak{w})$ is well defined and contained in $V$ for all $s\in[0,1)$. Recall that the vector field $Z$ defining $\phi$ is actually defined on a neighborhood  of $\overline{V}$ (see Section \ref{distorcion}). Then we deduce that     $\phi^{sT}(\mathfrak{w})$ is well defined and contained in $V$ for all $s\in[0,1]$ and the assertion follows.

 By the assertion above we have in particular that the segment $$I_{{\mathfrak{w}}}=\{\phi^{s\ln\frac{r_{2}}{r_{1}}}({\mathfrak{w}}),s\in[0,1]\}$$
is contained in $V$. Set $${\displaystyle V'=\bigcup_{{\mathfrak{w}}\in\partial V_{1}}I_{{\mathfrak{w}}}}.$$
We will redefine the function $h$ on each $I_{{\mathfrak{w}}}$. Naturally $\bar{h}(I_{{\mathfrak{w}}})$ must be a
segment with endpoints $\{h({\mathfrak{w}}),h(\phi^{\ln\frac{r_{2}}{r_{1}}}({\mathfrak{w}}))\}$.
Clearly $\widetilde{T}({\mathfrak{w}},T)\in\widetilde{R}$ for all $T\in R$ and
we can define the map $\widetilde{T}_{{\mathfrak{w}}}:R\rightarrow\widetilde{R}$,
$\widetilde{T}_{{\mathfrak{w}}}(T)=\widetilde{T}({\mathfrak{w}},T)$. It is not difficult to
see that we have the following properties:
\begin{enumerate}
\item $\widetilde{T}_{{\mathfrak{w}}}$ is a homeomorphism onto its image.
\item \label{contiene rectangulo}$\widetilde{T}_{{\mathfrak{w}}}(R)$ contains the rectangle
\begin{equation}\widetilde{R}_0=\{2\ln L\le\tau\le\ln\frac{r_{2}}{r_{1}}-2\ln L,-\tilde{\theta}_{0}\le\theta\le\tilde{\theta}_{0}\}.\label{rectangulo pequeno}\end{equation}
Here we use the equations \ref{rectangulo grande} and \ref{control en tau}.
\end{enumerate}
Clearly the points  $\widetilde{T}_{{\mathfrak{w}}}(0)=0$
and $\widetilde{T}_{{\mathfrak{w}}}(\ln\frac{r_{2}}{r_{1}}):=a_{{\mathfrak{w}}}$ are contained
in $\partial\widetilde{T}_{{\mathfrak{w}}}(R)$. Consider the Poincar\'e metric in
the interior of $\widetilde{T}_{{\mathfrak{w}}}(R)$ and let $\gamma$ be a geodesic
in $\widetilde{T}_{{\mathfrak{w}}}(R)$ with ${\displaystyle \lim_{s\rightarrow-\infty}\gamma(s)=0}$
and ${\displaystyle \lim_{s\rightarrow+\infty}\gamma(s)=a_{{\mathfrak{w}}}}$.
Although the parameterized geodesic $\gamma$ is not uniquely determined
the set $\Gamma_{{\mathfrak{w}}}=\gamma(\mathbb{R})\cup\{0,a_{{\mathfrak{w}}}\}$ is well defined
and depends continuously on ${\mathfrak{w}}$.
\begin{prop}
\label{geodesica transversal}If $r_{2}/r_{1}$ and $\tilde{\theta}_{0}$
are large enough, then $\Gamma_{{\mathfrak{w}}}$ intersects once and transversally
each line $\{\tau=c\}$ with $c$ in the interval $$[\frac{1}{2}\ln\frac{r_{2}}{r_{1}}-\ln L, \frac{1}{2}\ln\frac{r_{2}}{r_{1}}+\ln L]$$

\end{prop}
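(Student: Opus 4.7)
The plan is to prove this by a Carath\'eodory convergence argument on the family of domains $\Omega_{\mathfrak{w}}:=\widetilde{T}_{\mathfrak{w}}(R)$, proceeding by contradiction. I first observe that $\Omega_{\mathfrak{w}}$ is a Jordan domain satisfying $\widetilde{R}_0\subset\Omega_{\mathfrak{w}}\subset\widetilde{R}$, and that the distinguished boundary points $0=\widetilde{T}_{\mathfrak{w}}(0)$ and $a_{\mathfrak{w}}=\widetilde{T}_{\mathfrak{w}}(\ln(r_2/r_1))$ sit on opposite ``short sides'' of $\Omega_{\mathfrak{w}}$, with real parts in $[-2\ln L,2\ln L]$ and in $[L_0-2\ln L,L_0+2\ln L]$ respectively (writing $L_0:=\ln(r_2/r_1)$), by equation \ref{control en tau}. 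Heuristically, as $L_0$ and $\tilde{\theta}_0$ both tend to infinity the domain $\Omega_{\mathfrak{w}}$ becomes arbitrarily long and thin, and its Poincar\'e geodesic between these two boundary points must, in the central region, degenerate to the horizontal axis of the limiting infinite strip.

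To make this precise, I argue by contradiction. Suppose the conclusion fails. Then there exist sequences $L_{0,n}\to\infty$, $\tilde{\theta}_{0,n}\to\infty$, points ${\mathfrak{w}}_n\in\partial V_1$, and values $c_n\in[L_{0,n}/2-\ln L,\, L_{0,n}/2+\ln L]$ for which $\Gamma_n:=\Gamma_{{\mathfrak{w}}_n}$ does not meet $\{\tau=c_n\}$ in exactly one transversal point. Translate each $\Omega_n:=\Omega_{{\mathfrak{w}}_n}$ by $-L_{0,n}/2$ to obtain $\Omega_n^*$, which satisfies $\widetilde{R}_{0,n}^*\subset\Omega_n^*\subset S$, where $S:=\{|\theta|<\tilde{\theta}_1\}$ and the translates $\widetilde{R}_{0,n}^*$ of the inner rectangles exhaust $S$. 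The translated boundary points $p_n^-$ and $p_n^+$ (the images of $0$ and $a_{{\mathfrak{w}}_n}$) have imaginary parts bounded by $\tilde{\theta}_1$, while their real parts tend to $-\infty$ and $+\infty$ respectively.

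Now let $\Phi_n:S\to\Omega_n^*$ be the conformal isomorphism sending the prime ends $\pm\infty$ of $S$ to those of $\Omega_n^*$ corresponding to $p_n^\pm$, normalized so that $\Phi_n(0)$ has vanishing real part (using the one remaining translational freedom in $\mathrm{Aut}(S)$ after fixing $\pm\infty$). By the Carath\'eodory kernel theorem together with this normalization, $\Phi_n$ converges to the identity uniformly on compact subsets of $S$, and Weierstrass's theorem then gives convergence of all derivatives on compacta. Consequently the geodesics $\Gamma_n^*=\Phi_n(\mathbb{R})$ converge in $C^1$ on compact subsets of $\mathbb{R}\subset S$ to the real axis itself; since $\mathbb{R}$ meets every vertical line once and transversally, the same property holds for $\Gamma_n^*$ on the compact central segment $\{|\mathrm{Re}\,\zeta|\le\ln L\}$ for all sufficiently large $n$ -- contradicting the assumption and completing the proof.

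The main technical obstacle I anticipate is justifying the boundary behavior of the maps $\Phi_n$: the Carath\'eodory kernel theorem in its standard form gives convergence of Riemann maps normalized at an interior point, but our normalization pins down two boundary prime ends that are themselves moving. This must be controlled using the inclusion $\widetilde{R}_{0,n}^*\subset\Omega_n^*$ -- which forces the boundary arcs of $\Omega_n^*$ adjacent to $p_n^\pm$ to exit every compact subset of $S$ -- together with the fact that the $p_n^\pm$ have bounded imaginary parts and real parts going to $\pm\infty$. Once this boundary compatibility is in place, the continuity of Poincar\'e geodesics under Carath\'eodory convergence yields the desired $C^1$ convergence of $\Gamma_n^*$ to $\mathbb{R}$ and the rest of the argument is routine.
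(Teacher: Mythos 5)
Your reduction to a Carath\'eodory kernel argument founders on a concrete point: the limit object is not a fixed strip. In the paper's setup the constant $\tilde{\theta}_1$ is chosen \emph{after} $\tilde{\theta}_0$ so that \ref{rectangulo grande} and \ref{rectangulo pequeno} hold, which forces $\tilde{\theta}_1\ge\tilde{\theta}_0$; moreover, by Proposition \ref{creciente-decreciente} the natural choice gives $\tilde{\theta}_1\approx\vartheta_2(\theta_0)$ while $\tilde{\theta}_0\approx\vartheta_1(\theta_0)$, so the ratio $\tilde{\theta}_1/\tilde{\theta}_0$ stays comparable to $a_2/a_1$ and in general does not tend to $1$. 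Hence along your contradiction sequence with $\tilde{\theta}_{0,n}\to\infty$ the outer strip $S_n=\{|\theta|<\tilde{\theta}_{1,n}\}$ is not fixed, and the inner rectangles $\widetilde{R}_{0,n}^*$ (half-height $\tilde{\theta}_{0,n}\to\infty$, half-length $\to\infty$) exhaust all of $\mathbb{C}$, not $S$. Consequently the Carath\'eodory kernel of the translated domains $\Omega_n^*$ with respect to any fixed point is $\mathbb{C}$, which is not hyperbolic, and the assertion that the normalized maps $\Phi_n$ converge to the identity of a fixed strip has no content; the sandwich $\widetilde{R}_{0,n}^*\subset\Omega_n^*\subset S_n$ with height ratio bounded away from $1$ simply does not determine a limit domain. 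Even in the more favorable reading where $\tilde{\theta}_0,\tilde{\theta}_1$ are held fixed and only $r_2/r_1\to\infty$, subsequential kernel limits $D_\infty$ can be any domain squeezed between the two strips, and the limiting geodesic is then the geodesic of $D_\infty$ between its two ends --- there is no reason it should be the real axis, nor even stay at bounded height (the endpoints $0$ and $a_{\mathfrak{w}}$ have bounded imaginary part, but a hyperbolic geodesic sags toward the thick part of an asymmetric domain). So the step ``$\Phi_n\to\mathrm{id}$, hence $\Gamma_n^*\to\mathbb{R}$ in $C^1$'' is exactly the missing content, not a routine consequence; the secondary difficulty you flag yourself (normalizing at two moving boundary prime ends, which kernel convergence does not control) compounds this but is not the main obstruction.

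By contrast, the paper never passes to a limit domain. It characterizes $\Gamma_{\mathfrak{w}}$ as the level set $\{\omega_{\widetilde{D}}(\cdot,\widetilde{C})=\tfrac12\}$ of a harmonic measure and uses the subordination principle (Theorem \ref{subordination principle}) to sandwich $\omega_{\widetilde{D}}(z,\widetilde{C})$ between harmonic measures of explicit comparison domains built from $\widetilde{R}_0$ and $\widetilde{R}$ \emph{of the same growing dimensions}, then transfers the estimate to the disc by the uniformization of $\widetilde{R}$ normalized at an interior point; the conclusion is $C^\infty$-closeness of the level set to a horizontal line on a central compact set, uniformly in $\mathfrak{w}\in\partial V_1$, from which uniqueness and transversality of the intersection with the central vertical lines follow. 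If you want to salvage a normal-families argument, you would have to replace ``convergence to the identity of a fixed strip'' by a quantitative statement valid for domains merely sandwiched between two rectangles with non-comparable heights, i.e.\ essentially reprove the harmonic-measure estimate that the paper uses.
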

\noindent\emph{Idea of the proof.} For $r_{2}/r_{1}$ and $\tilde{\theta}_{0}$ arbitrarily large we have that, after "normalization", the region $\widetilde{T}_{{\mathfrak{w}}}(R)$ is arbitrarily close to $\widetilde{R}$ and the line $\{\tau=c\}$ is arbitrarily close to the center line $\{\tau=\frac{1}{2}\ln\frac{r_{2}}{r_{1}}\}$ of $\widetilde{R}$. Then the geodesic $\Gamma_{{\mathfrak{w}}}$ should be arbitrarily close to the geodesic $\{\theta=0\}$ in $\widetilde{R}$ in such way $\Gamma_{{\mathfrak{w}}}$ intersects once and transversely the line $\{\tau=c\}$.  The complete proof is given in Section \ref{potencial theory}.
\qed

\subsection{Proof of Proposition \ref{regularizado}}\label{aq}
Naturally we assume $r_{2}/r_{1}$ and $\tilde{\theta}_{0}$ large
enough according to Proposition \ref{geodesica transversal}. If $h(\mathfrak{w})=(h_1(\mathfrak{w}),h_2(\mathfrak{w}))$ it is easy to see that $$c_{\mathfrak{w}} =\ln\frac{\sqrt{r_1r_2}}{|h_1(\mathfrak{w})|}$$ belongs to the interval in Proposition \ref{geodesica transversal}. Then $\Gamma_{{\mathfrak{w}}}$ intersects once and transversely the line $\{\tau=c_{{\mathfrak{w}}}\}$.
Let
$o_{{\mathfrak{w}}}$ be the intersection point between $\Gamma_{{\mathfrak{w}}}$ and $\{\tau=c_{\mathfrak{w}}\}$.
Since this intersection is transversal the point $o_{{\mathfrak{w}}}$ depends
continuously on ${\mathfrak{w}}$. Now there is a unique parameterized geodesic $\gamma_{{\mathfrak{w}}}$
such that $${\displaystyle \lim_{s\rightarrow-\infty}\gamma_{{\mathfrak{w}}}(s)=0},\quad
{\displaystyle \lim_{s\rightarrow+\infty}\gamma_{{\mathfrak{w}}}(s)=a_{{\mathfrak{w}}}},\quad
\gamma_{{\mathfrak{w}}}(0)=o_{{\mathfrak{w}}}$$ and $\gamma_{{\mathfrak{w}}}$ depends continuously on ${\mathfrak{w}}$.
We are now in position to define $\bar{h}$ on $I_{{\mathfrak{w}}}$. Put $w=\phi^{s\ln\frac{r_{2}}{r_{1}}}({\mathfrak{w}})$, $s\in[0,1]$, fix an increasing diffeomorphism $f:(0,1)\rightarrow\mathbb{R}$ and
define
\begin{eqnarray*}
\bar{h}(w) & = & \tilde{\phi}^{\gamma_{{\mathfrak{w}}}(f(s))}(h({\mathfrak{w}})),\mbox{ if } s\in(0,1).\\
\bar{h}(w) & = & h(w),\mbox{ if } s=0\mbox{ or }-1.\end{eqnarray*}
Thus, $I_{\mathfrak{w}}$ is mapped onto $\tilde{\phi}^{\Gamma_{\mathfrak{w}}}(h({\mathfrak{w}}))$.
 Naturally  we set $\bar{h}=h$ outside ${\displaystyle V'=\bigcup_{{\mathfrak{w}}\in\partial V_{1}}I_{{\mathfrak{w}}}}$. If $$\overline{T}(w)=\gamma_{{\mathfrak{w}}}(f(s))-\widetilde{T}(w,s\ln\frac{r_{2}}{r_{1}})$$ by the flow property we have $$\bar{h}(w)=\tilde{\phi}^{\overline{T}(w)}(h(w)).$$
Then, since $\gamma_{{\mathfrak{w}}}(f(s))$  and  $\widetilde{T}(w,s\ln\frac{r_{2}}{r_{1}})$ are contained in  $\widetilde{R}$, we obtain item \ref{empuje limitado} of  Proposition \ref{regularizado}.
 Put $r_{12}=r_1(\frac{r_2}{r_1})^{s_{\mathfrak{w}}}$, where $s_\mathfrak{w}=f^{-1}(0)$. Suppose $w\in I_{\mathfrak{w}}$ is contained in the set $\{(t,x)\in V:|t|=r_{12}\}$. Then  $w={\phi}^{s_{\mathfrak{w}}\ln\frac{r_2}{r_1}}(\mathfrak{w})$, hence $$\bar{h}(w)=\tilde{\phi}^{\gamma_{\mathfrak{w}}(f(s_{\mathfrak{w}}))}(h(\mathfrak{w}))
 =\tilde{\phi}^{\gamma_{\mathfrak{w}}(0)}(h(\mathfrak{w}))=\tilde{\phi}^{o_{\mathfrak{w}}}(h(\mathfrak{w}))$$
 and $$|\tilde{\phi}_1^{o_{\mathfrak{w}}}(h(\mathfrak{w}))|
 =|h_1(\mathfrak{w})|e^{\mbox{Re}(o_{\mathfrak{w}})}=|h_1(\mathfrak{w})|e^{c_{\mathfrak{w}}}=\sqrt{r_1r_2},$$ which proves item \ref{r12}.

It is easy to see that, if  restricted to a small enough
neighborhood of the exceptional divisor, $\bar{h}$ has the following properties:
\begin{enumerate}
\item $\bar{h}$ is continuous.
\item $\bar{h}$ maps leaves of $\mathcal{F}$ into leaves of $\widetilde{\mathcal{F}}$.
\item $\bar{h}$ maps points in $V'$ into $h(V')$
\item $\bar{h}(w)$ tends to the exceptional divisor as $w$ tends
to the exceptional divisor.
\end{enumerate}
Then, in order to prove that $\bar{h}$ defines a topological equivalence
between $\mathcal{F}$ and $\widetilde{\mathcal{F}}$ it suffices
to show that $\bar{h}$ is injective, which reduces to show that the
sets $\{\bar{h}(I_{{\mathfrak{w}}})\}_{{\mathfrak{w}}\in\partial V_{1}}$ are disjoint. We will
use the following lemma, which is proved in Section \ref{potencial theory}.
\begin{lem}
\label{lema empuja geodesica}Let $U$ be a Jordan region in $\mathbb{C}$.
Consider the Poincar\'e metric in $U$ and let $\gamma$ be a geodesic
with ${\displaystyle \lim_{s\rightarrow-\infty}\gamma(s)=\zeta_{1}\in\partial U}$
and ${\displaystyle \lim_{s\rightarrow\infty}\gamma(s)=\zeta_{2}\in\partial U}$ ($\zeta_1\neq\zeta_2$).
Let $C\subset\partial U$ be one of the two segments determined by $\{\zeta_{1},\zeta_{2}\}$.
Let $\widetilde{U}$ be a Jordan region such that  $$U\subsetneq\widetilde{U}\mbox{ and
}C\subset\partial\widetilde{U}.$$
Consider the Poincar\'e metric on $	\widetilde{U}$ and let $\tilde{\gamma}$
be the geodesic in $\widetilde{U}$ with ${\displaystyle \lim_{s\rightarrow-\infty}\tilde{\gamma}(s)=\zeta_{1}}$  and  ${\displaystyle \lim_{s\rightarrow\infty}\widetilde{\gamma}(s)=\zeta_{2}}$.
Let $\Omega$ be the Jordan region bounded by $\gamma$ and $C$.
Then $\tilde{\gamma}$ is disjoint of $\overline{\Omega}$.
\end{lem}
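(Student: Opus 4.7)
My plan is to realize both geodesics as level sets of the harmonic measure of $C$ in their respective domains, and then compare these two harmonic measures on $U$ via the maximum principle. For a Jordan domain $V$ whose boundary is split by two accessible points $\zeta_1,\zeta_2$ into two open arcs, one of which is $C$, let $\omega_V(\cdot,C)$ denote the bounded harmonic function on $V$ with boundary values $1$ on the open arc $C$ and $0$ on its complement. Transferring via a Riemann map $V\to\mathbb{D}$ that sends $\{\zeta_1,\zeta_2\}$ to $\{-1,1\}$, the reflection symmetry of $\mathbb{D}$ identifies the hyperbolic geodesic from $\zeta_1$ to $\zeta_2$ in $V$ with the level set $\{\omega_V(\cdot,C)=1/2\}$, and the region bounded by this geodesic and $C$ with $\{\omega_V(\cdot,C)>1/2\}$.

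Write $g=\omega_U(\cdot,C)$ and $f=\omega_{\widetilde U}(\cdot,C)$, so that $\gamma=\{g=1/2\}$, $\Omega=\{g>1/2\}$, and $\tilde\gamma=\{f=1/2\}$. The key claim is that $f>g$ strictly on $U$. Both functions are harmonic on $U\subset\widetilde U$. On $C\subset\partial U\cap\partial\widetilde U$ they agree (both equal $1$); on $\partial U\setminus C$, the function $g$ vanishes, while $f$ is the restriction of a strictly positive harmonic function on $\widetilde U$. Since $U\subsetneq\widetilde U$ and $\widetilde U$ is open, the set $(\partial U\setminus C)\cap\mathrm{int}(\widetilde U)$ is a nonempty open subset of $\partial U$ on which $f>0=g$. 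Hence $f-g\geq 0$ on $\partial U$ with strict inequality on a nonempty open portion, and the maximum principle yields $f>g$ throughout $U$.

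The lemma then follows at once. For $p\in\overline\Omega$ there are three cases: if $p\in\Omega\cup\gamma$ then $g(p)\geq 1/2$, hence $f(p)>1/2$ and $p\notin\tilde\gamma$; if $p\in C\setminus\{\zeta_1,\zeta_2\}$ then $p\in\partial\widetilde U$, while $\tilde\gamma=\tilde\gamma(\mathbb{R})$ is contained in $\mathrm{int}(\widetilde U)$, so again $p\notin\tilde\gamma$; the endpoints $\zeta_1,\zeta_2$ are only limit points of $\tilde\gamma$ and do not belong to $\tilde\gamma(\mathbb{R})$ itself. Therefore $\tilde\gamma\cap\overline\Omega=\emptyset$. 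The main subtlety is the boundary behavior of $g$ and $f$ near the branch points $\zeta_1,\zeta_2$ and the rigorous identification of the geodesics with the level sets $\{g=1/2\}$, $\{f=1/2\}$; this is standard and is handled by Carath\'eodory's theorem on the continuous extension of the Riemann map for Jordan domains.
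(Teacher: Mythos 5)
Your proposal is correct and follows essentially the same route as the paper: both identify $\gamma$ and $\tilde{\gamma}$ with the level sets $\{\omega_U(\cdot,C)=\tfrac12\}$ and $\{\omega_{\widetilde U}(\cdot,C)=\tfrac12\}$ and conclude from the strict domain monotonicity of harmonic measure, which the paper cites as the subordination principle (Theorem \ref{subordination principle}) and you reprove directly via the maximum principle.
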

Let ${\mathfrak{w}}_{1},{\mathfrak{w}}_{2}$ be different points in $\partial V_{1}$. In order
to prove that $\bar{h}(I_{{\mathfrak{w}}_{1}})\cap\bar{h}(I_{{\mathfrak{w}}{}_{2}})=\emptyset$,
since  $$\bar{h}(I_{{\mathfrak{w}}_{1}})\subset h(\phi^{R}({\mathfrak{w}}_{1})),\quad
\bar{h}(I_{{\mathfrak{w}}_{2}})\subset h(\phi^{R}({\mathfrak{w}}_{2})) \mbox{ and } h \mbox{
is injective,}$$ it suffices to consider the case $$\phi^{R}({\mathfrak{w}}_{1})\cap\phi^{R}({\mathfrak{w}}_{2})\neq\emptyset.$$
Reordering ${\mathfrak{w}}_{1},{\mathfrak{w}}_{2}$ if necessary we have the following:
\begin{enumerate}
\item ${\mathfrak{w}}_{2}=\phi^{{\alpha i}}({\mathfrak{w}}_{1})$ for some $\alpha>0$.\label{alpha}
\item $\phi^{T}({\mathfrak{w}}_{1})$ is defined for all $T$ in $$\mathcal{R}=\{\tau+\theta i:0\le\tau\le\ln\frac{r_{2}}{r_{1}},-\theta_{0}\le\theta\le\alpha+\theta_{0}\}.$$
\end{enumerate}
Let $$R_{\alpha}=\{\tau+{\theta i}:0\le\tau\le\ln\frac{r_{2}}{r_{1}},\alpha-\theta_{0}\le\theta\le\alpha+\theta_{0}\}$$
and consider the Jordan regions $\widetilde{T}_{{\mathfrak{w}}_{1}}(R)$, $\widetilde{T}_{{\mathfrak{w}}_{1}}(\mathcal{R})$
and $\widetilde{T}_{{\mathfrak{w}}_{1}}(R_{\alpha})$. By applying Lemma $\ref{lema empuja geodesica}$
to the regions $\widetilde{T}_{{\mathfrak{w}}_{1}}(R)\subsetneq\widetilde{T}_{{\mathfrak{w}}_{1}}(\mathcal{R})$
we see that the geodesic $\gamma_{1}$ in $\widetilde{T}_{{\mathfrak{w}}_{1}}(\mathcal{R})$
{}``joining'' the points $\widetilde{T}_{{\mathfrak{w}}_{1}}(0)$ and $\widetilde{T}_{{\mathfrak{w}}_{1}}(\ln\frac{r_{2}}{r_{1}})$
is disjoint of $\overline{\Omega_{0}}$, where $\Omega_{0}$ is the
region bounded by the curves:
\begin{enumerate}
\item The geodesic $\Gamma_{{\mathfrak{w}}_{1}}$,
\item $C_{1}^{0}:\widetilde{T}_{{\mathfrak{w}}_{1}}(-s\theta_{0}i)$, $s\in[0,1]$,
\item $C_{2}^{0}:\widetilde{T}_{{\mathfrak{w}}_{1}}(s\ln\frac{r_{2}}{r_{1}}-\theta_{0}i)$,
$s\in[0,1]$,
\item $C_{3}^{0}:\widetilde{T}_{{\mathfrak{w}}_{1}}(\ln\frac{r_{2}}{r_{1}}-s\theta_{0}i)$,
$s\in[0,1]$.
\end{enumerate}
Then, if $\Omega_{1}$ is the region bounded by $\gamma_{1}$, $C_{1}^{0}$,
$C_{2}^{0}$ and $C_{3}^{0}$ we have $\Gamma_{{\mathfrak{w}}_{1}}\subset\Omega_{1}$.
Since the geodesic $\gamma_{2}$ in $\widetilde{T}_{{\mathfrak{w}}_{1}}(\mathcal{R})$
"joining" the points $\widetilde{T}_{{\mathfrak{w}}_{1}}({\alpha i})$ and $\widetilde{T}_{{\mathfrak{w}}_{1}}(\ln\frac{r_{2}}{r_{1}}+{\alpha i})$
does not meet $\gamma_{1}$, we have that $\Omega_{1}$ is contained
in the region $\Omega_{2}$ bounded by the curves:
\begin{enumerate}
\item The geodesic $\gamma_{2}$,
\item $C_{1}^{1}:\widetilde{T}_{{\mathfrak{w}}_{1}}((1-s)\alpha i-s\theta_{0}i)$, $s\in[0,1]$,
\item $C_{2}^{1}:\widetilde{T}_{{\mathfrak{w}}_{1}}(s\ln\frac{r_{2}}{r_{1}}-\theta_{0}i)$,
$s\in[0,1]$,
\item $C_{3}^{1}:\widetilde{T}_{{\mathfrak{w}}_{1}}(\ln\frac{r_{2}}{r_{1}}+(1-s)\alpha i-s\theta_{0}i)$,
$s\in[0,1]$.
\end{enumerate}
By applying Lemma $\ref{lema empuja geodesica}$ to the regions $\widetilde{T}_{{\mathfrak{w}}_{1}}(R_{\alpha})\subsetneq\widetilde{T}_{{\mathfrak{w}}_{1}}(\mathcal{R})$
we have that the geodesic $\gamma_{3}$ in $\widetilde{T}_{{\mathfrak{w}}_{1}}(R_{\alpha})$
"joining" the points $\widetilde{T}_{{\mathfrak{w}}_{1}}({\alpha i})$ and $\widetilde{T}_{{\mathfrak{w}}_{1}}(\ln\frac{r_{2}}{r_{1}}+{\alpha i})$
is disjoint of $\overline{\Omega_{2}}$, hence $\gamma_{3}$ is disjoint of $\Gamma_{{\mathfrak{w}}_{1}}\subset\Omega_{1}\subset\Omega_{2}$.
By the definition of $\widetilde{T}_{w}$ and the flow property we
obtain the following relation: \[
\widetilde{T}_{{\mathfrak{w}}_{1}}({\alpha i}+T)=\widetilde{T}_{{\mathfrak{w}}_{1}}({\alpha i})+\widetilde{T}_{{\mathfrak{w}}_{2}}(T).\]

Then $\widetilde{T}_{{\mathfrak{w}}_{1}}(R_{\alpha})=\widetilde{T}_{{\mathfrak{w}}_{1}}({\alpha i})+\widetilde{T}_{{\mathfrak{w}}_{2}}(R)$
and, since the map $z\mapsto z+\widetilde{T}_{{\mathfrak{w}}_{1}}({\alpha i})$ is
a isometry between $\widetilde{T}_{{\mathfrak{w}}_{2}}(R)$ and $\widetilde{T}_{{\mathfrak{w}}_{1}}(R_{\alpha})$,
we deduce that $\gamma_{3}=\widetilde{T}_{{\mathfrak{w}}_{1}}({\alpha i})+\Gamma_{{\mathfrak{w}}_{2}}$.
Then\begin{eqnarray*}
\tilde{\phi}^{\gamma_{3}}(h({\mathfrak{w}}_{1})) & = & \tilde{\phi}^{\widetilde{T}_{{\mathfrak{w}}_{1}}({\alpha i})+\Gamma_{{\mathfrak{w}}_{2}}}(h({\mathfrak{w}}_{1}))\\
 & = & \tilde{\phi}^{\Gamma_{{\mathfrak{w}}_{2}}}(\tilde{\phi}^{\widetilde{T}_{{\mathfrak{w}}_{1}}({\alpha i})}(h({\mathfrak{w}}_{1}))\\
 & = & \tilde{\phi}^{\Gamma_{{\mathfrak{w}}_{2}}}(h({\mathfrak{w}}_{2}))\\
 & = & \bar{h}(I_{{\mathfrak{w}}_{2}}).\end{eqnarray*}
 At this point we consider two cases.
\begin{caseenv}
\item The map $\widetilde{\phi}_{h({\mathfrak{w}}_{1})}:T\mapsto\tilde{\phi}^{T}(h({\mathfrak{w}}_{1}))$
is injective. Then, since $\gamma_{3}$ is disjoint of $\Gamma_{{\mathfrak{w}}_{1}}$
we have that $\tilde{\phi}_{h({\mathfrak{w}}_{1})}(\gamma_{3})$ is disjoint of
$\tilde{\phi}_{h({\mathfrak{w}}_{1})}(\Gamma_{{\mathfrak{w}}_{1}})$, that is, $\bar{h}(I_{{\mathfrak{w}}_{2}})$
is disjoint of $\bar{h}(I_{{\mathfrak{w}}_{1}})$.
\item The map $\widetilde{\phi}_{h({\mathfrak{w}}_{1})}:T\mapsto\tilde{\phi}^{T}(h({\mathfrak{w}}_{1}))$
is not injective. In this case, provided $h({\mathfrak{w}}_{1})$ is close enough
to the exceptional divisor the leaf of $\widetilde{\mathcal{F}}|_{\widetilde{V}}$
through $h({\mathfrak{w}}_{1})$ is compact and by Lemma \ref{homologico} the
maps $T\mapsto\phi^{T}({\mathfrak{w}}_{1})$ and $T\mapsto\widetilde{\phi}^{T}(h({\mathfrak{w}}_{1}))$
have period $2\pi ki$ for some $k\in\mathbb{N}$ and $$\widetilde{T}_{{\mathfrak{w}}_{1}}(T+2k\pi i)=\widetilde{T}_{{\mathfrak{w}}_{1}}(T)+\xi 2k\pi i.$$
Therefore, for any $n\in\mathbb{Z}$, in the arguments above we can
change $\alpha$ by $\alpha+n2k\pi$, we put $\gamma_{3}^{k}$ instead $\gamma_3$
and thus we obtain that $\Gamma_{{\mathfrak{w}}_{1}}$ is disjoint of
\begin{eqnarray*}
\gamma_{3}^{k} & = & \widetilde{T}_{{\mathfrak{w}}_{1}}((\alpha+n\xi 2k\pi)i)+\Gamma_{{\mathfrak{w}}_{2}}\\
 & = & \widetilde{T}_{{\mathfrak{w}}_{1}}(\alpha i)+n\xi 2k\pi i+\Gamma_{{\mathfrak{w}}_{2}}\\
 & = & \gamma_{3}+n\xi 2k\pi i
 \end{eqnarray*}
for any $n\in\mathbb{Z}$. Then, since $\widetilde{\phi}^{\gamma_{3}}(h({\mathfrak{w}}_{1}))=\bar{h}(I_{{\mathfrak{w}}_{2}})$
we deduce that $\Gamma_{{\mathfrak{w}}_{1}}$ is disjoint of the inverse image
of $\bar{h}(I_{{\mathfrak{w}}_{2}})$ by $T\mapsto\widetilde{\phi}^{T}(h({\mathfrak{w}}_{1}))$.
Therefore $\bar{h}(I_{{\mathfrak{w}}_{1}})$
is disjoint of $\bar{h}(I_{{\mathfrak{w}}_{2}})$.
\end{caseenv}

\section{\label{main proof}Proof Theorem \ref{main result}}
Let $\bar{h}$ as in Section \ref{regularization}. As we have seen
in Proposition \ref{regularizado}, for some $\delta_{1}>0$ we have
that $\bar{h}$ maps the set $$T=\{|t|=r_{12},0<|x|\le \delta_{1}\}$$
into the set $$\widetilde{T}=\{|t|=\sqrt{r_{1}r_{2}},0<|x|\le \tilde{\delta}_{0}.\}$$
In this section we construct
a  topological equivalence $\bar{\bar{h}}$   between $\mathcal{F}$
and $\widetilde{\mathcal{F}}$ mapping the Hopf fibers in $T$ into
Hopf fibers in $\widetilde{T}$. Clearly this finishes the proof of
Theorem \ref{main result}. The key of the proof is the following
proposition  which shows that there is a Hopf fiber in $T$ whose image by $\bar{h}$
in $\widetilde{T}$ can be "redressing" to a Hopf fiber in $\widetilde{T}$.
\begin{prop}
\label{redressing} By reducing $\delta_1$  if necessary, there exist:
\begin{enumerate}
\item A punctured Hopf fiber $D^{*}=\{(t_{0},x):0<|x|<\delta_{1}\}$ with
$|t_{0}|=r_{12}$
\item A Hopf fiber $\widetilde{D}=\{(\tilde{t}_{0},x):|x|<\tilde{\delta}_{0}\}$
 with $|\tilde{t}_{0}|=\sqrt{r_{1}r_{2}}$
\item A real function $\sigma$ defined on $\mathcal{A}=\bar{h}(D^{*})$,
\end{enumerate}
such that the following properties hold:
\begin{enumerate}
\item $\tilde{\phi}^{s\sigma(\tilde{w})i}(\tilde{w})\in\widetilde{T}$ and $f(\tilde{w}):=\tilde{\phi}^{\sigma(\tilde{w})i}(\tilde{w})\in\widetilde{D}$
for all $\tilde{w}\in\mathcal{A}$ and all $s\in[0,1]$
\item $f:\mathcal{A}\rightarrow\widetilde{D}$ is a homeomorphism onto its
image
\item $f(\mathcal{A})=\Omega\backslash o$, where $o=(\tilde{t}_{0},0)\in\widetilde{D}$
and $\Omega$ is a topological disc
\item $f(\widetilde{w})$ tends to $o$ as $\tilde{w}$ tends to the exceptional
divisor.
\end{enumerate}
\end{prop}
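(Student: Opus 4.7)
The plan is to define a continuous real function $\sigma$ on $\mathcal{A}=\bar{h}(D^*)$ such that the imaginary-time flow $\tilde{\phi}^{\sigma(\tilde{w})i}$ sends the first coordinate of $\tilde{w}\in \mathcal{A}$ to $\tilde{t}_0$, and then to put $f(\tilde{w})=\tilde{\phi}^{\sigma(\tilde{w})i}(\tilde{w})$. Pick any $t_0$ with $|t_0|=r_{12}$ and any $\tilde{t}_0$ with $|\tilde{t}_0|=\sqrt{r_1r_2}$, and set $D^*=\{(t_0,x):0<|x|<\delta_1\}$, $\widetilde{D}=\{(\tilde{t}_0,x):|x|<\tilde{\delta}_0\}$. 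Since $\mathcal{A}\subset\widetilde{T}$, $|\tilde{t}|=|\tilde{t}_0|$ on $\mathcal{A}$, and the equation $e^{i\sigma(\tilde{w})}=\tilde{t}_0/\tilde{t}$ determines $\sigma(\tilde{w})$ modulo $2\pi\mathbb{Z}$.

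The first key point is that $\sigma$ admits a continuous single-valued branch on $\mathcal{A}$. Because $\mathcal{A}$ is a topological punctured disk, the only obstruction is the winding of $\arg(\tilde{t})$ around a small loop $\bar{h}(\partial D^*_\epsilon)$ encircling the puncture, which equals the coefficient of $\tilde{\alpha}$ in the class $[\bar{h}(\partial D^*_\epsilon)]\in H_1(\widetilde{V})$. Now $\bar{h}=h$ outside $V$ and $\bar{h}=\tilde{\phi}^{\overline{T}}\circ h$ on $V$ with $|\mbox{Im}(\overline{T})|\le\tilde{\theta}_1$ (item \ref{empuje limitado} of Proposition \ref{regularizado}), so $\bar{h}$ and $h$ are homotopic on loops close to the exceptional divisor; Proposition \ref{homologico basico} then gives $[\bar{h}(\partial D^*_\epsilon)]=\xi\tilde{\beta}$, whose $\tilde{\alpha}$-coefficient is zero. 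Hence $\sigma$ lifts continuously.

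The main obstacle is to show that this lift is bounded on $\mathcal{A}$, which is essential because the flow estimate $|\tilde{\phi}_2^{\theta i}(\tilde{x})|\le|\tilde{x}|\exp(|\theta|\sup|\widetilde{Q}|)$ is the mechanism by which $f(\tilde{w})\to o$ as $\tilde{w}\to E$. For $w=(t_0,x)\in D^*$ we have $\arg(\bar{h}_1(w))=\arg(h_1(w))+\mbox{Im}(\overline{T}(w))$, so $\sigma(\bar{h}(w))$ is controlled modulo $2\pi$ by $\arg(h_1(w))$ up to a bounded error. Moreover, the regularization sends $w$ to a point $\mathfrak{w}$ on a single Hopf-fiber slice $\{\mathfrak{w}_1=t_0r_1/r_{12}\}\cap\partial V_1$ of $\partial V_1$; along this slice $|h_1(\mathfrak{w})|$ stays in $[r_1/L,Lr_1]$ by Proposition \ref{control anular}, and one checks that after reducing $\delta_1$ if necessary, the single-valued lift of $\arg(h_1(\mathfrak{w}))$ oscillates only within a uniformly bounded interval. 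Consequently $|\sigma|\le\tilde{\theta}_2$ for some $\tilde{\theta}_2>0$.

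Once $\sigma$ is known to be bounded and continuous, $f(\tilde{w})=\tilde{\phi}^{\sigma(\tilde{w})i}(\tilde{w})$ has first coordinate $\tilde{t}_0$ by construction and second coordinate of modulus at most $C|\tilde{x}|$, so $f$ lands in $\widetilde{D}$ for $\delta_1$ small enough, and $f(\tilde{w})\to(\tilde{t}_0,0)=o$ as $\tilde{w}\to E$; the same estimate applied to $\tilde{\phi}^{s\sigma(\tilde{w})i}(\tilde{w})$, $s\in[0,1]$, shows the connecting path stays in $\widetilde{T}$, giving property 1 and property 4. Since $D^*$ is transverse to $\mathcal{F}$ and $\bar{h}$ preserves leaves, $\mathcal{A}$ is transverse to $\widetilde{\mathcal{F}}$, and this together with the fact that $\sigma$ is a continuous single-valued branch forces $f$ to be injective (property 2). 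Finally, $\mathcal{A}$ is a topological punctured disk and $f$ extends continuously to its closure by sending the puncture to $o$, so $f(\mathcal{A})=\Omega\setminus o$ for a topological disk $\Omega\subset\widetilde{D}$ (property 3).
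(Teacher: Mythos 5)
Your outline reproduces the easy parts of the argument (single-valuedness of a branch of $\sigma$ via the homological compatibility, and the final flow estimate), but it has a genuine gap at the one point where the real difficulty lies: the assertion that the continuous lift of $\arg(h_1)$ along the punctured fiber ``oscillates only within a uniformly bounded interval'', i.e.\ that $|\sigma|\le\tilde\theta_2$ on $\mathcal{A}$. This is unjustified and is in general false. Proposition \ref{control anular} controls only the \emph{modulus} $|\tilde t|$, not the argument; as the paper stresses in Section \ref{itinerary}, the image $h(D^*)$ of a punctured Hopf fiber may wind infinitely many times around $\{t=0\}$ as it approaches $E$, so the winding function $\Theta$ (hence any continuous branch of $\sigma$) can be unbounded, and reducing $\delta_1$ does not help. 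Once $\sigma$ is unbounded, your mechanism collapses: the crude estimate $|\tilde\phi_2^{\theta i}(\tilde x)|\le|\tilde x|e^{|\theta|\sup|\widetilde Q|}$ gives nothing (and one does not even know a priori that the imaginary flow stays in $\widetilde V$ for the required time). The actual proof replaces boundedness by a quantitative trade-off coming from the bilipschitz hypothesis: Lemma \ref{coca} and Propositions \ref{parte}, \ref{esa} show that a winding of $2n\pi$ forces $|\tilde x|$ to be exponentially small in $n$, and Propositions \ref{lem6}--\ref{lem8} and \ref{control en teta} show that at such depths the imaginary flow is defined for time $|\Theta|$ with $|\tilde\phi_2^{is}|\le\mu_3\sqrt{|\tilde x_1|}$ --- this is what yields properties (1) and (4), not a uniform bound on $\sigma$.

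A second omission: this balance argument uses that the holonomy multiplier of $\widetilde{\mathcal F}|_{\widetilde V}$ along $\{x=0\}$ has modulus one (Proposition \ref{lem8} crucially uses $|\lambda|=1$ to iterate the return map $n$ times while only losing a factor $\sqrt{1+\mu_1}^{\,n}\le\sqrt{\mu_2/|x|}$). When the holonomy is hyperbolic the paper must argue completely differently (Section \ref{hyperbolic}): $\mathcal A_0$ carries a natural complex structure, the conjugated holonomy is holomorphic and homologically nontrivial, the annulus theorem forces $\mathcal A_0$ to be a punctured disc, and after linearization the flow is complete on $\widetilde T_\infty$, so $f$ is holomorphic and extends over the puncture by Riemann's extension theorem. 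Your proposal treats both cases with one (invalid) uniform argument. Finally, your injectivity claim (``transversality plus single-valuedness of $\sigma$ forces injectivity'') is too quick: two distinct points of $\mathcal A$ on the same leaf could a priori be flowed to the same point of $\widetilde D$; the paper rules this out by a holonomy argument built on Proposition \ref{homologico basico}, showing the relevant loop does not link $\{t=0\}$ and hence the induced holonomy map is the identity.
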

\begin{proof}
We give the proof of this proposition in sections  \ref{redressing nh} and \ref{hyperbolic}.
\end{proof}

We perform the proof of Theorem \ref{main result} in two steps. \\

\noindent\textbf{Step 1.} As a first step, for some $\widetilde{T}_1\subset\widetilde{T}$  we construct a homeomorphism $f:\widetilde{T}_1\rightarrow\widetilde{T}$ preserving $\widetilde{\mathcal{F}}$ and redressing the image by ${\bar{h}}$ of the Hopf fibers in $T$.
Set $$C=\{|t|=r_{12},x=0\},$$ $$\widetilde{C}=\{|t|=\sqrt{r_1r_2},x=0\},$$ ${\zeta}_0=(t_0,0)$  and $\tilde{\zeta}_0=(\tilde{t}_0,0)$.
Consider the natural orientation on $C$.
Let $\gamma$
be a oriented circle in $T$ homotopic to $C$ in $\overline{T}$
and take a diffeomorphism $g:C\mapsto{\widetilde{C}}$, $g(\zeta_{0})=\tilde{\zeta}_0$
such that $g(C)$ is homotopic to $\bar{h}(\gamma)$ in $\overline{\widetilde{T}}$.
Put $$\widetilde{T}_1=\{(t,x)\in\widetilde{T}: 0<|x|\le \tilde{\delta}_1\}$$ and assume $\tilde{\delta}_1>0$
be such that
\begin{enumerate}
\item $\tilde{\phi}^{s2\pi i}(w)\in \widetilde{T}$ for all $w\in \widetilde{T}_1$, $s\in[-1,1]$
\item \label{234}$\phi^{s2\pi i}(w)\in T$ for all $w\in{{\bar{h}}}^{-1}(\widetilde{T}_1)$,
$s\in[-1,1]$.
\end{enumerate}
Given $\zeta\in C$, define $\vartheta(\zeta)\in[0,2\pi)$ by $\zeta=\phi^{\vartheta(\zeta)i}(\zeta_{0})$
and let ${\tilde{\vartheta}}(\zeta)\in\mathbb{R}$ be such that $\tilde{\phi}^{s{\tilde{\vartheta}}(\zeta)i}(\tilde {\zeta}_0)$,
$s\in[0,1]$ is a positive reparameterization of the path $g(\phi^{s\vartheta(\zeta)i}(\zeta_{0}))$,
$s\in[0,1]$. Clearly $\vartheta$ and ${\tilde{\vartheta}}$ are continuous
on $C\backslash\lbrace\zeta_0\rbrace$ and they have a simply discontinuity
at $\zeta_{0}$. Let $\pi_1$ be the projection $(t,x)\rightarrow t$
in $T$. Given $\tilde{w}\in \widetilde{T}_1$, put $\zeta(\tilde{w})=\pi_1\circ{\bar{h}}^{-1}(\tilde{w})$
and let $\theta(\tilde{w})\in\mathbb{R}$ be such that $\tilde{\phi}^{-s\theta(\tilde{w})i}(\tilde{w})$,
$s\in[0,1]$ is a positive reparameterization of $$\bar{h}(\phi^{-s\vartheta(\zeta(\tilde{w}))i}({\bar{h}}^{-1}(\tilde{w}))),\,
s\in[0,1].$$ From (\ref{234}) and the definition of $\theta$ it is easy
to see that $\tilde{\phi}^{-\theta(\tilde{w})i}(\tilde{w})\in\mathcal{A}$ for all $\tilde{w}\in \widetilde{T}_1$. Let $\sigma$ be the function given by Proposition \ref{redressing}.   We extend $\sigma$ to $\widetilde{T}_1$ by putting:
  \begin{equation}
\sigma(\tilde{w})=-\theta(\tilde{w})+\sigma (\tilde{\phi}^{-\theta(\tilde{w})i}(\tilde{w}))+{\tilde{\vartheta}}(\zeta(\tilde{w})).\label{tau}\end{equation}
\\

\noindent\textit{Assertion.
$\sigma$ is continuous and $\phi^{s\sigma(\tilde{w})i}(\tilde{w})\in \widetilde{T}$ for all $\tilde{w}\in {\widetilde{T}_1}$,
$s\in[0,1]$.} \\

Fix $\tilde{w}_1\in\mathcal{A}$. It is sufficient to show that $\sigma(\tilde{w})\rightarrow\sigma({\tilde{w}_1})$
whenever $\tilde{w}\rightarrow{\tilde{w}_1}\in\mathcal{A}$ with $\pi<\vartheta(\zeta(\tilde{w}))<2\pi$.
If $\vartheta(\zeta(\tilde{w}))\rightarrow2\pi$ we have that $\theta(\tilde{w})\rightarrow\theta_{0}$,
where $\theta_{0}$ is such that $\tilde{\phi}^{-s\theta_{0}i}({\tilde{w}_1})$,
$s\in[0,1]$ is a positive reparameterization of $${\bar{h}}(\phi^{-s2\pi i}({{\bar{h}}}^{-1}({\tilde{w}_1}))),\,
s\in[0,1].$$ Then $${\tilde{w}_0}:=\tilde{\phi}^{-\theta_{0}i}({\tilde{w}_1})={{\bar{h}}}(\phi^{-2\pi i}({{\bar{h}}}^{-1}({\tilde{w}_1})))\in\mathcal{A}.$$
Let $\gamma:[0,1]\mapsto\mathcal{A}$ be any path such that $\gamma(0)={\tilde{w}_0}$
and $\gamma(1)={\tilde{w}_1}$. For all $t\in[0,1]$ define the paths
 $$\gamma_{t}(s)=\tilde{\phi}^{t\sigma\circ\gamma(s)i}(\gamma(s)),\, s\in[0,1]$$
and \[
\alpha_{t}(s)=\tilde{\phi}^{[(1-s)t\sigma({\tilde{w}_1})+s(t\sigma({\tilde{w}_0})-\theta_{0})]i}({\tilde{w}_1}),\, s\in[0,1].\]
  The paths $\alpha_{t}*\gamma_{t}$ are closed and
give a homotopy between $\alpha_{0}*\gamma$ and $\alpha_{1}*\gamma_{1}$.
By the definition of $\theta_{0}$, the path $\alpha_{0}$ is homotopic
in $\widetilde{T}$ to the path ${{\bar{h}}}\tilde{\phi}^{-s2\pi i}({\bar{h}}^{-1}({\tilde{w}_1}))$,
$s\in[0,1]$. Then $\alpha_{0}*\gamma$ is homotopic to the path ${{\bar{h}}}(\widetilde{\alpha}*\widetilde{\gamma})$,
where $$\widetilde{\alpha}(s)=\tilde{\phi}^{-s2\pi i}({{\bar{h}}}^{-1}({\tilde{w}_1})),\,
s\in[0,1]$$ and $\widetilde{\gamma}={{\bar{h}}}^{-1}\circ\gamma$. But the
path $\widetilde{\alpha}*\widetilde{\gamma}$ is homotopic to $-C$
in $\overline{T}$. Then, it follows from the definition of $g$ that
$\alpha_{0}*\gamma$ is homotopic to $g(-C)$ in $\overline{\widetilde{T}}$.
Therefore $\alpha_{1}*\gamma_{1}$ is homotopic to $g(-C)$ in $\overline{\widetilde{T}}$.
Observe that, since $\gamma_{1}\subset\widetilde{D}$, the path $\alpha_{1}*\gamma_{1}$
is homotopic in $\overline{\widetilde{T}}$ to the closed path $$\tilde{\phi}^{[(1-s)\sigma({\tilde{w}_1})+s(\sigma({\tilde{w}_0})-\theta_{0})]i}(\tilde{\zeta}_{0}),\,
s\in[0,1].$$ Then $g(-C)$ is homotopic to $$\tilde{\phi}^{s(\sigma({\tilde{w}_0})-\sigma({\tilde{w}_1})-\theta_{0})i}(q),\,
s\in[0,1],$$ where $q=\tilde{\phi}^{\sigma({\tilde{w}_1})i}(\tilde{\zeta}_{0})$.
On the other hand, since $\vartheta(\zeta(\tilde{w}))\rightarrow2\pi$
as $\tilde{w}\rightarrow{\tilde{w}_1}$ with $\pi<\vartheta(\zeta(\tilde{w}))<2\pi$,
it follows from the definition of ${\tilde{\vartheta}}$ that ${\tilde{\vartheta}}(\zeta(\tilde{w}))\rightarrow\xi 2\pi $,
where $\xi\in\{1,-1\}$ is such that $\tilde{\phi}^{-s\xi2\pi i}(\tilde{\zeta}_{0})$,
$s\in[0,1]$ is a positive reparameterization of $$g(-C)=\{g\tilde{\phi}^{-s2\pi i}(\zeta_{0}),\,
s\in[0,1]\}.$$ Then $g(-C)$ is homotopic to $$\{\tilde{\phi}^{-s\xi2\pi i}(\tilde{\zeta}_{0}),s\in[0,1]\}=\{\tilde{\phi}^{-s\xi2\pi i}(q),
s\in[0,1]\}.$$ It follows that the paths $\{\tilde{\phi}^{s(\sigma({\tilde{w}_0})-\sigma({\tilde{w}_1})-\theta_{0})i}(q)\}$
and $\{\tilde{\phi}^{-s\xi2\pi i}(q)\}$ are homotopic in $\overline{\widetilde{T}}$
and this implies that \[
\xi 2\pi=-\sigma({\tilde{w}_0})+\sigma({\tilde{w}_1})+\theta_{0}.\]
 Thus, if $\tilde{w}\rightarrow{\tilde{w}_1}$ with $\pi<\vartheta(\zeta(\tilde{w}))<2\pi$,
we have that $\theta(\tilde{w})\rightarrow\theta_{0}$, $\sigma\tilde{\phi}^{-\theta(\tilde{w})}(\tilde{w})\rightarrow\sigma\tilde{\phi}^{-\theta_{0}}({\tilde{w}_1})=\sigma({\tilde{w}_0})$,
${\tilde{\vartheta}}(\zeta(\tilde{w}))\rightarrow\xi 2\pi=-\sigma({\tilde{w}_0})+\sigma({\tilde{w}_1})+\theta_{0}$
and replacing in (\ref{tau}) we obtain that $\sigma(\tilde{w})\rightarrow\sigma({\tilde{w}_1})$.
Therefore $\sigma$ is continuous. On the other hand it is easy to
see that $\tilde{\phi}^{s\sigma(\tilde{w})}({\tilde{w}})\in\widetilde{T}$ for
all $\tilde{w}\in\widetilde{T}_{1}$, $s\in[0,1]$. Assertion is proved.

Define the map \begin{align*}
f & :\widetilde{T}_{1}\mapsto\widetilde{T}\\
f({\tilde{w}}) & =\tilde{\phi}^{\sigma(\tilde{w})}(\tilde{w}).\end{align*}

This map $f$ is an extension of the map $f:\mathcal{A}\rightarrow{\widetilde{D}}$ given by Proposition \ref{redressing}.
 Given $\zeta=(t_{\zeta},0)\in C$, put $g(\zeta)=({\tilde{t}}_{\zeta},0)$
and define the sets \begin{align*}
\mathcal{A}_{\zeta} & ={\bar{h}}(\{(t_{\zeta},x):0<|x|<\delta_1\})\\
{\widetilde{D}}_{\zeta} & =\{({\tilde{t}}_{\zeta},x):|x|<\tilde{\delta}_0\}.\end{align*}

Observe that $f({\tilde{w}})\in{\widetilde{D}}_{\zeta}$ for all ${\tilde{w}}\in\mathcal{A}_{\zeta}\cap {\widetilde{T}_1}$.
 Moreover, the map $$f_{\zeta}=f|_{\mathcal{A}_{\zeta}\cap {\widetilde{T}_1}}:\mathcal{A}_{\zeta}\cap {\widetilde{T}_1}\mapsto{\widetilde{D}}_{\zeta}$$
 can be expressed
as $f_{\zeta}={\tilde{\rho}}f_{0}{\bar{h}}{\rho}{\bar{h}}^{-1}$, where ${\rho}(w)={\phi}^{-\vartheta(\zeta)i}(w)$,
${\tilde{\rho}}(w)=\tilde{\phi}^{{\tilde{\vartheta}}(\zeta)i}(w)$ and $f_0=f|_{\mathcal{A}\cap {\widetilde{T}_1}}$. Clearly ${\rho}$
and ${\tilde{\rho}}$ are diffeomorphisms and by Proposition
\ref{redressing} the map $f_0$ is a homeomorphism. Then $f_{\zeta}$
is a homeomorphism onto its image and $f_{\zeta}({\tilde{w}})$ tends to the
divisor as ${\tilde{w}}$ tends to the divisor. Then  we have the following:
\begin{enumerate}
\item $f$ is a homeomorphism onto its image
\item $f$ preserves the 1-foliation induced by $\widetilde{\mathcal{F}}$   on $\widetilde{T}$
\item $f({\tilde{w}})$ tends to the divisor as ${\tilde{w}}$ tends to the divisor
\item $f$ maps $\mathcal{A}_{\zeta}\cap {\widetilde{T}_1}$ into the Hopf fiber ${\widetilde{D}}_{\zeta}$. Thus, for some $\delta_{2}>0$ we have that $f\circ {\bar{h}}$ maps each Hopf fiber
$\{(t_{\zeta},x):0<|x|<\delta_{2}\}$ into the Hopf fiber $({\tilde{t}}_{\zeta},x):0<|x|<\tilde{\delta}_0\}$.\\

\end{enumerate}

\noindent\textbf{Step 2.} Now,  we will extend $f$ as a topological equivalence of $\widetilde{\mathcal{F}}$
with itself. Thus we will define $\bar{\bar{h}}=f\circ\bar{h}$, which clearly will finish the proof of Theorem \ref{main result}.
First, for some $\varepsilon,\tilde{\delta}_2>0$ we will extend $f$ to the
set $$\mathcal{T}=\{({{t}},x):\sqrt{r_1r_2}-\varepsilon\leq|{{t}}|\leq \sqrt{r_1r_2}+\varepsilon,0<|x|\le \tilde{\delta}_2\}$$ in such way $f=\mbox{id}$ on the boundary
$$\partial\mathcal{T}=\{({{t}},x):|{{t}}|= \sqrt{r_1r_2}\pm\varepsilon,0<|x|\le \tilde{\delta}_2\}.$$  Take first any $\tilde{\delta_2}\in(0,\tilde{\delta}_1)$.
There is $\varepsilon>0$ with the following property: If $\tilde{w}\in \mathcal{T}$, there exists a unique $\tau(\tilde{w})\in\mathbb{R}$ such that $\tilde{\phi}^{\tau(\tilde{w})}(\tilde{w})$ is contained in $\widetilde{T}_1$. Put $\varrho(\tilde{w})=\tilde{\phi}^{\tau(\tilde{w})}(\tilde{w})$. Take a continuous function $${\nu}:[\sqrt{r_1r_2}-\varepsilon,\sqrt{r_1r_2}+\varepsilon]\mapsto[0,1]$$ such
that ${\nu}(\sqrt{r_1r_2}\pm\varepsilon)=0$ and ${\nu}(\sqrt{r_1r_2})=1$.
Then denote ${\tilde{w}}=(\tilde{t},\tilde{x})$ and extend $\sigma$ and $f$ by the expressions:\begin{align*}
\sigma({\tilde{w}}) &=    {\nu}(|\tilde{t}|)\sigma(\varrho(\tilde{w}))\\
f({\tilde{w}}) & =    \tilde{\phi}^{\sigma({\tilde{w}})}({\tilde{w}}).\end{align*}
Naturally we extend $f$ as the identity map outside $\mathcal{T}$.  Then  it is not difficult to see that close to the exceptional divisor $f$  is a topological equivalence of $\widetilde{\mathcal{F}}$
with itself.

\section{\label{redressing nh} Proof of Proposition \ref{redressing} in the non-hyperbolic case}
In this section we assume that the holonomy of  $\widetilde{\mathcal{F}}|_{\overline{\widetilde{V}}}$ at the leaf $\{x=0\}$ is non-hyperbolic.
 Fix a punctured Hopf fiber
${D_0}^*=\{(t_{0},x):0<|x|<\delta_{0}\}$ with $|t_{0}|=r_{12}$. Clearly in general
the set ${\mathcal{A}}_0=h({{D_0}^*})$ is not a Hopf fiber, but we can use the flow to move each point of ${\mathcal{A}}_0=h({{D_0}^*})$ the correct amount to obtain a Hopf fiber.   Given $\tilde{w}_1,\tilde{w}_2\in{\mathcal{A}}_0$, take a continuous path $\gamma=(\gamma_{1},\gamma_{2})$
in ${\mathcal{A}}_0$ with $\gamma(0)=\widetilde{w}_{1}$ and $\gamma(1)=\widetilde{w}_2$.
We can write $\gamma_{1}(s)=r(s)e^{i\alpha(s)}$ for some continuous
functions $r$ and $\alpha$. Define $\Theta(\widetilde{w}_1,\widetilde{w}_2)=\alpha(1)$.
From Proposition \ref{homologico basico} we see that $\Theta(\widetilde{w}_1,\widetilde{w}_2)$
does not depend on the path $\gamma$. The function $\Theta$ measures the oscillation of ${\mathcal{A}}_0$ around $\{t=0\}$, that is,
the deviation of ${\mathcal{A}}_0$ from being a Hopf fiber.
\begin{rem}It is easy to see from the definition of $\Theta$ that $$\Theta(\widetilde{w}_1,\widetilde{w}_2)+\Theta(\widetilde{w}_2,\widetilde{w}_3)=\Theta(\widetilde{w}_1,\widetilde{w}_3)$$
for any $\widetilde{w}_1,\widetilde{w}_2,\widetilde{w}_3\in{\mathcal{A}}_0$. From this and the mean value theorem we deduce that,
if $\Theta(\widetilde{w}_1,\widetilde{w}_2)=\epsilon_1+\ldots+\epsilon_k$ with $\epsilon_j>0$ and $C$ is a simple path connecting $\widetilde{w}_1$ and $\widetilde{w}_2$, then there are intermediary ordered points $\mathfrak{w}_0,\ldots,\mathfrak{w}_k$ in $C$ with $\mathfrak{w}_0=\widetilde{w}_1$, $\mathfrak{w}_k=\widetilde{w}_2$ and such that $|\Theta(\mathfrak{w}_{j-1},\mathfrak{w}_{j})|=\epsilon_j$.
\end{rem}
Proposition \ref{redressing} is based in the following proposition.
\begin{prop}
\label{control en teta} There exists $\delta_{1}\in(0,\delta_{0})$ and $\mu_3>0$
such that the set $${{\mathcal{A}_1}}=h(\{(t_{0},x):0<|x|<\delta_{1}\})$$ has the following property: for  any $\widetilde{w}_1,\widetilde{w}_2\in{{\mathcal{A}_1}}$, $\tilde{w}_1=(\tilde{t}_1,\tilde{x}_1)$  we
have that $\widetilde{\phi}^{is}(\widetilde{w_1})$ is well defined and $|\widetilde{\phi}^{is}_2(\tilde{w}_1)|\le\mu_3\sqrt{|\tilde{x}_1|}$ for  $|s|\le |\Theta(\widetilde{w}_1,\widetilde{w}_2)|$.
\end{prop}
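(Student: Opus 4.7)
The plan is to combine an ODE estimate for the imaginary-time flow $\widetilde{\phi}^{is}$ with a geometric bound on $|\Theta|$ extracted from the bilipschitz hypothesis. The non-hyperbolic assumption will enter decisively through the fact that $\widetilde{Q}(0,0)\in\mathbb{R}$, which prevents rapid growth of $|\tilde{x}|$ under the flow.

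First I would analyze the flow. Writing $y(s)=|\widetilde{\phi}_2^{is}(\tilde{w}_1)|$, the defining ODE $\tfrac{d\tilde{x}}{dT}=\tilde{x}\widetilde{Q}(\tilde{t},\tilde{x})$ specialized to $T=is$ (so $|\tilde{t}|\equiv|\tilde{t}_1|$) gives
\[
\frac{d\log y}{ds}=-\operatorname{Im}\widetilde{Q}\bigl(\tilde{t}_1 e^{is},\widetilde{\phi}_2^{is}(\tilde{w}_1)\bigr).
\]
Because the holonomy at $\{x=0\}$ is non-hyperbolic, $\widetilde{Q}(0,0)$ is real, so $|\operatorname{Im}\widetilde{Q}(\tilde{t},\tilde{x})|\le C_1(|\tilde{t}|+|\tilde{x}|)$. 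Using Proposition \ref{control anular} to bound $|\tilde{t}_1|\le L r_{12}$ and a standard continuity argument, one obtains
\[
y(s)\le|\tilde{x}_1|\exp\bigl(C_1|s|(|\tilde{t}_1|+y_{\max})\bigr)
\]
as long as $y_{\max}=\sup_{|\sigma|\le |s|}y(\sigma)$ stays in a fixed small neighborhood of $0$; in particular $\widetilde{\phi}^{is}(\tilde{w}_1)$ remains defined in $\widetilde{V}$ throughout this range.

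Next, I would bound $|\Theta(\tilde{w}_1,\tilde{w}_2)|$ by working with the identity $\tilde{t}=\mathfrak{h}_2/\mathfrak{h}_1$ along the path in $\pi(\mathcal{A}_0)$. Parametrizing a path $x(s)$ in $D_0^{\ast}$ from $x_1$ to $x_2$ gives, at points of differentiability,
\[
\bigl|\tilde{t}'/\tilde{t}\bigr|\le|\mathfrak{h}_1'|/|\mathfrak{h}_1|+|\mathfrak{h}_2'|/|\mathfrak{h}_2|.
\]
The bilipschitz property yields $|\mathfrak{h}_j'|\le M(1+|t_0|)|x'|$, while Lemma \ref{lem5} combined with Proposition \ref{control anular} gives $|\mathfrak{h}_1|=|\tilde{x}|\ge c|x|$ and $|\mathfrak{h}_2|=|\tilde{t}||\tilde{x}|\ge(r_1/L)c|x|$. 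Integrating along a logarithmic path from $x_1$ to $x_2$ in the punctured disc, and recalling that the Lipschitz comparison makes $|x_j|$ comparable with $|\tilde{x}_j|$, one arrives at a bound of the form $|\Theta(\tilde{w}_1,\tilde{w}_2)|\le C_2\bigl(|\log(|\tilde{x}_1|/|\tilde{x}_2|)|+\pi\bigr)$.

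Finally, I would combine the two estimates and choose the parameters. By Remark \ref{r2/r1 grande} the radius $|t_0|=r_{12}$ may be taken arbitrarily small, and $\delta_1$ may be shrunk so that $y_{\max}\le\mu_3\sqrt{\delta_1}$ is small as well; this forces $C_1(|\tilde{t}_1|+y_{\max})$ to be very small, so the exponential factor in the flow bound is controlled even for $|s|$ up to $|\Theta|$. A bootstrap: assume $y(\sigma)\le\mu_3\sqrt{|\tilde{x}_1|}$ for $|\sigma|\le s$; the ODE estimate then shows $y(s)\le|\tilde{x}_1|\exp(C_1|\Theta|(|\tilde{t}_1|+\mu_3\sqrt{\delta_1}))$, and choosing $\delta_1,\mu_3,r_{12}$ so that the right-hand side stays below $\mu_3\sqrt{|\tilde{x}_1|}$ closes the bootstrap. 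The main obstacle is coordinating the two inequalities uniformly for $\tilde{w}_1,\tilde{w}_2\in\mathcal{A}_1$: when $|\tilde{x}_2|$ is much smaller than $|\tilde{x}_1|$ the value $|\Theta|$ is large while the target $\sqrt{|\tilde{x}_1|}$ is not small, so one must exploit the non-hyperbolic smallness of $\operatorname{Im}\widetilde{Q}$ carefully and, if needed, decompose the path $\tilde{w}_1\leadsto\tilde{w}_2$ into sub-arcs on which the bootstrap can be iterated.
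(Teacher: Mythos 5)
Your second ingredient (the winding bound $|\Theta(\widetilde{w}_1,\widetilde{w}_2)|\le C_2\bigl(|\log(|\tilde{x}_1|/|\tilde{x}_2|)|+\pi\bigr)$, obtained from the Lipschitz bound on the image path together with the lower bounds $|\mathfrak{h}_1|,|\mathfrak{h}_2|\gtrsim|x|$) is sound in spirit and is essentially the content of the paper's Lemma \ref{coca} and Propositions \ref{parte} and \ref{esa}, which reach the same conclusion by a purely metric pigeonhole argument (each half-turn of $\Theta$ forces a displacement $\ge\frac{r_1m}{LM}|z_2|$, hence geometric decay of $|z|$ along a radial segment and bounded winding along a circle), with no recourse to a.e.\ differentiability. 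The gap is in your first ingredient. The estimate $|\operatorname{Im}\widetilde{Q}(\tilde{t},\tilde{x})|\le C_1(|\tilde{t}|+|\tilde{x}|)$ does not follow from non-hyperbolicity and is false in the generality of the paper: $\widetilde{Q}$ is only holomorphic on the closed annular region $\overline{\widetilde{V}}$, the singularity at $(0,0)$ may be a saddle-node or worse, and $\widetilde{Q}$ need not extend to $t=0$, so ``$\widetilde{Q}(0,0)\in\mathbb{R}$'' has no meaning. For instance, for $t^2\frac{\partial}{\partial t}+x\frac{\partial}{\partial x}$ one has $\widetilde{Q}=1/t$, whose imaginary part on $\widetilde{V}$ is of size $1/r_1$, yet the holonomy multiplier equals $1$ (non-hyperbolic). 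Non-hyperbolicity only says that the \emph{average} of $\operatorname{Im}\widetilde{Q}$ over a full turn vanishes (modulus-one multiplier); it gives no pointwise smallness, so the exponential rate in your Gronwall bound cannot be made small by shrinking $r_{12}$ and $\delta_1$, and the inequality $C_1\epsilon C_2<\tfrac12$ on which your bootstrap closes is not available.

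This is exactly where the paper's proof diverges: non-hyperbolicity is used through the \emph{return map}, not through pointwise bounds on $\operatorname{Im}\widetilde{Q}$. After shrinking $\tilde{\delta}_3$, the holonomy $x\mapsto\widetilde{\phi}_2^{2\pi i}(t,x)$ multiplies $|x|$ by at most $\sqrt{1+\mu_1}$ per turn (Proposition \ref{lem8}), with only a crude bounded factor inside each turn (Propositions \ref{lem6}, \ref{lem7}); crucially, this growth rate is calibrated to the \emph{same} constant $\mu_1$ that appears in the decay rate $(1+\mu_1)$ per turn of winding coming from the bilipschitz estimate (Proposition \ref{parte}). Winding $2n\pi$ forces $|\tilde{x}_1|\lesssim(1+\mu_1)^{-n}$, while flowing for time $2n\pi$ costs at most $(1+\mu_1)^{n/2}$, and the balance of these two matched geometric rates is precisely what yields the bound $\mu_3\sqrt{|\tilde{x}_1|}$; with unrelated constants $C_1,C_2$ as in your scheme no such balance is guaranteed. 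Finally, the uniformity issue you flag at the end (the case where $|\tilde{x}_2|$ is much smaller than $|\tilde{x}_1|$) is left unresolved in your sketch; the paper handles the pair by normalizing $|x_1|\le|x_2|$ and always applying the decay estimate to the inner point, so that the point at which the flow estimate is needed is exactly the one forced to be exponentially small.
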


This section is organized as follows. We start with a lemma  and the intermediary propositions \ref{parte}, \ref{esa}, \ref{lem6} and \ref{lem7}. In Subsection \ref{nonhyperbolic} we prove a preparatory proposition and give the proof of Proposition \ref{control en teta}. Finally, in Subsection \ref{sec redressing} we proof Proposition \ref{redressing}.  It is worth mentioning that Propositions \ref{lem6} and \ref{lem7}  are independent of the Lipschitz hypothesis.

 \begin{lem}\label{coca} Let $\widetilde{w}_1$ and $\widetilde{w}_2$ be points in ${\mathcal{A}}_0$  such that
$$|\Theta(\widetilde{w}_1,\widetilde{w}_2)|=\pi\in\mathbb{R}.$$ Let $z_1, z_2\in {\mathbb{C}}^2$ be such that
$\mathfrak{h}(z_j)=\pi(\widetilde{w}_j)$, $j=1,2$. Then
$$|z_1-z_2|\geq\frac{r_1m}{LM}|z_2|.$$
  \end{lem}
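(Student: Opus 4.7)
The plan is to exploit that $|\Theta(\widetilde w_1,\widetilde w_2)|=\pi$ forces the first coordinates of $\widetilde w_1$ and $\widetilde w_2$ to point in opposite directions, and then push a direct Lipschitz estimate through the blow-up coordinates.

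Write $\widetilde w_j=(\tilde t_j,\tilde x_j)$. The hypothesis says $\arg\tilde t_1$ and $\arg\tilde t_2$ differ by $\pi$, whence $|\tilde t_1-\tilde t_2|=|\tilde t_1|+|\tilde t_2|$. Since $w_j=(t_0,x_j)\in D_0^*$ with $|t_0|=r_{12}\in(r_1,r_2)$, Proposition \ref{control anular} together with the standing convention $Lr_2\le 1/2$ yields
$$\frac{r_1}{L}\le\frac{r_{12}}{L}\le|\tilde t_j|\le Lr_{12}\le\frac12,\qquad j=1,2.$$

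Next I expand $\mathfrak h(z_j)=\pi(\widetilde w_j)=(\tilde x_j,\tilde t_j\tilde x_j)$ and split the second coordinate of $\mathfrak h(z_1)-\mathfrak h(z_2)$ as
$$(\tilde t_1-\tilde t_2)\tilde x_2+\tilde t_1(\tilde x_1-\tilde x_2).$$
Working in the norm $|(a,b)|=|a|+|b|$ used elsewhere in the paper, the triangle inequality together with $|\tilde t_1|<1$ gives
$$|\mathfrak h(z_1)-\mathfrak h(z_2)|\ \ge\ (1-|\tilde t_1|)|\tilde x_1-\tilde x_2|+(|\tilde t_1|+|\tilde t_2|)|\tilde x_2|\ \ge\ (|\tilde t_1|+|\tilde t_2|)|\tilde x_2|.$$
Since also $|\mathfrak h(z_2)|=(1+|\tilde t_2|)|\tilde x_2|$, dividing and inserting the bounds from the previous step yields
$$|\mathfrak h(z_1)-\mathfrak h(z_2)|\ \ge\ \frac{|\tilde t_1|+|\tilde t_2|}{1+|\tilde t_2|}\,|\mathfrak h(z_2)|\ \ge\ \frac{2r_1/L}{2}\,|\mathfrak h(z_2)|\ =\ \frac{r_1}{L}\,|\mathfrak h(z_2)|.$$

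Finally, combining with the Lipschitz inequalities $|\mathfrak h(z_2)|\ge m|z_2|$ and $|\mathfrak h(z_1)-\mathfrak h(z_2)|\le M|z_1-z_2|$ closes the estimate and gives $|z_1-z_2|\ge\frac{r_1m}{LM}|z_2|$.

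The only bookkeeping subtlety is that $|\tilde t_j|\le 1/2$ gets used twice: once to absorb the cross term $\tilde t_1(\tilde x_1-\tilde x_2)$ against the $|\tilde x_1-\tilde x_2|$ piece of the $L^1$ norm, and once to replace the denominator $1+|\tilde t_2|$ by $2$. Both are immediate from the conventions established just after Proposition \ref{control anular}, so no real obstacle arises.
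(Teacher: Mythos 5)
Your proposal is correct and takes essentially the same route as the paper: the hypothesis $|\Theta(\widetilde w_1,\widetilde w_2)|=\pi$ forces $|\tilde t_1-\tilde t_2|=|\tilde t_1|+|\tilde t_2|\ge 2r_1/L$, and this, combined with the comparison $|\mathfrak h(z_2)|\le 2|\tilde x_2|$ and the bilipschitz constants $m,M$, is exactly the paper's mechanism. The only cosmetic difference is that you inline the blow-up norm estimate (the content of Lemma \ref{bu1}, item (1)) by expanding $\tilde t_1\tilde x_1-\tilde t_2\tilde x_2$ directly, whereas the paper passes through $|\widetilde w_1-\widetilde w_2|$ and cites that lemma.
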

\begin{proof}
Since $|\Theta(\widetilde{w}_{1},\widetilde{w}_{2})|=\pi$,
it is easy to see that $|\widetilde{w}_{1}-\widetilde{w}_{2}|\ge2r_{1}/L.$
Then, if $\mathfrak{h}(z_{2})=(\widetilde{x}_{2},\widetilde{y}_{2})$,
by Lemma $\ref{bu1}$ we have \[
|z_{1}-z_{2}|\ge\frac{1}{M}|\mathfrak{h}(z_{1})-\mathfrak{h}(z_{2})|\ge\frac{1}{M}|\widetilde{x}_{2}||\widetilde{w}_{1}-\widetilde{w}_{2}|\ge\frac{2r_{1}}{LM}|\widetilde{x}_{2}|.\]
But $|\mathfrak{h}(z_{2})|=|\widetilde{x}_{2}|+|\widetilde{y}_{2}|\le|\widetilde{x}_{2}|+Lr_{2}|\widetilde{x}_{2}|\le2|\widetilde{x}_{2}|$,
hence\[
|z_{1}-z_{2}|\ge\frac{2r_{1}}{LM}|\widetilde{x}_{2}|\ge\frac{2r_{1}}{LM}\frac{|\mathfrak{h}(z_{2})|}{2}\ge\frac{r_{1}m}{LM}|z_{2}|.\]

\end{proof}

 \begin{prop}\label{parte}Let $(t_0,x_1),(t_0,x_2)$   be points in ${{D_0}^*}$ and denote $\widetilde{w}_1=(\widetilde{t}_1,\widetilde{x}_1)=h(t_0,x_1)$ and $\widetilde{w}_2=(\widetilde{t}_2,\widetilde{x}_2)=h(t_0,x_2)$. Then  there exists a constant $\mu_1> 0$
with the following property. If
$x_2=\lambda x_1$ with $0<\lambda\le 1$ and
$$|\Theta(\widetilde{w}_1,\widetilde{w}_2)|= 2n\pi\quad \textrm{for some}\quad n\in\mathbb{N},$$
 then
$$|\widetilde{x}_2|\leq \frac{\frac{2M}{m}|\widetilde{x}_1|}{(1+\mu_1)^n}.$$

\end{prop}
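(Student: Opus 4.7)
The plan is to subdivide the total winding $|\Theta(\widetilde{w}_1,\widetilde{w}_2)|=2n\pi$ into $2n$ increments of size $\pi$, apply Lemma \ref{coca} to each consecutive pair of intermediate points, and chain the resulting geometric contractions. The key observation is that a single $\pi$-winding step already produces a multiplicative contraction on the $|z|$-side via Lemma \ref{coca}, and the $2n$ steps will give the desired exponential decay.

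First I would parametrize the radial segment in $D_0^*$ from $(t_0,x_1)$ to $(t_0,\lambda x_1)$ by $s\in[\lambda,1]\mapsto(t_0,sx_1)$, so that its image under $h$ is a simple path $C\subset\mathcal{A}_0$ joining $\widetilde{w}_1$ and $\widetilde{w}_2$. By the remark following the definition of $\Theta$, applied to the decomposition $|\Theta(\widetilde{w}_1,\widetilde{w}_2)|=\pi+\cdots+\pi$ ($2n$ summands), I obtain ordered intermediate points $\mathfrak{w}_0=\widetilde{w}_1,\mathfrak{w}_1,\ldots,\mathfrak{w}_{2n}=\widetilde{w}_2$ on $C$ with $|\Theta(\mathfrak{w}_{j-1},\mathfrak{w}_j)|=\pi$ for each $j$. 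Since $C$ is the injective image of $[\lambda,1]$, these correspond to monotone parameters $1=s_0>s_1>\cdots>s_{2n}=\lambda$ with $\mathfrak{w}_j=h(t_0,s_jx_1)$; the preimages $z_j=(s_jx_1,t_0s_jx_1)$ satisfy $\mathfrak{h}(z_j)=\pi(\mathfrak{w}_j)$ and $|z_j|=(1+|t_0|)s_j|x_1|$.

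Applying Lemma \ref{coca} to each pair $\mathfrak{w}_{j-1},\mathfrak{w}_j$ yields $|z_{j-1}-z_j|\geq\frac{r_1m}{LM}|z_j|$; in terms of the radial parameters this collapses to $(s_{j-1}-s_j)\geq\frac{r_1m}{LM}s_j$, i.e., $s_{j-1}\geq(1+\mu_1)s_j$ with $\mu_1:=\frac{r_1m}{LM}$. Iterating from $j=1$ to $2n$ gives $1=s_0\geq(1+\mu_1)^{2n}s_{2n}=(1+\mu_1)^{2n}\lambda$, hence $\lambda\leq(1+\mu_1)^{-2n}$.

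To conclude I would translate the bound on $\lambda$ into one on $|\widetilde{x}_2|/|\widetilde{x}_1|$ via the bilipschitz inequalities: on the one hand $|\widetilde{x}_2|\leq|\mathfrak{h}(z_2)|\leq M|z_2|=M\lambda|z_1|$; on the other $|z_1|\leq(1/m)|\mathfrak{h}(z_1)|$, with $|\mathfrak{h}(z_1)|=(1+|\widetilde{t}_1|)|\widetilde{x}_1|\leq(1+Lr_{12})|\widetilde{x}_1|\leq 2|\widetilde{x}_1|$ (using Proposition \ref{control anular} and $Lr_2\leq 1/2$, so $Lr_{12}<1/2$). Combining gives $|\widetilde{x}_2|\leq\frac{2M}{m}\lambda|\widetilde{x}_1|\leq\frac{2M/m}{(1+\mu_1)^{2n}}|\widetilde{x}_1|$, which is even stronger than the claimed bound (since $n\geq 1$). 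The only delicate point I anticipate is confirming that the intermediate points supplied by the remark can be taken in monotone $s$-order along $C$; this follows because $C$ is the injective image of $[\lambda,1]$ under $h$ and the remark produces the $\mathfrak{w}_j$ ordered along $C$.
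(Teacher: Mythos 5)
Your proposal is correct and follows essentially the same route as the paper: subdivide the winding $2n\pi$ into $2n$ half-turns along the image of the radial segment, apply Lemma \ref{coca} to each consecutive pair to get the geometric contraction $s_{j-1}\ge(1+\tfrac{r_1m}{LM})s_j$ (the paper phrases this in terms of $|\mathfrak{z}_j|$ rather than the radial parameter, which is the same thing), and then convert to the $\widetilde{x}$-bound with the bilipschitz inequalities at the endpoints. The only cosmetic difference is bookkeeping: you keep the exponent $2n$ with $\mu_1=\tfrac{r_1m}{LM}$, while the paper absorbs the factor $2$ by setting $1+\mu_1=(1+\tfrac{r_1m}{LM})^2$.
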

\proof Consider the segment $$L=\{(sx_1,st_0x_1): \lambda\leq s\leq
1\}$$ in ${\mathbb{C}}^2\backslash\{0\}$.
Clearly $z_1=(x_1,t_0x_1)$ and $z_2=(\lambda x_1,\lambda t_0x_1)$ are the
endpoints of $L$ and $\widetilde{w}_1=h\circ\pi^{-1}(z_1)$, $\widetilde{w}_2=h\circ\pi^{-1}(z_2)$. Since the simple path $h\circ\pi^{-1}(L)$ connects $\widetilde{w}_1$ with $\widetilde{w}_2$ and $|\Theta(\widetilde{w}_1,\widetilde{w}_2)|= 2n\pi$, we can find ordered points  $$\mathfrak{w}_0=h\circ\pi^{-1}(\mathfrak{z}_0),\ldots,\mathfrak{w}_{2n}=h\circ\pi^{-1}(\mathfrak{z}_{2n})$$ in $h\circ\pi^{-1}(L)$, with $\mathfrak{z}_0=z_1$, $\mathfrak{z}_{2n}=z_2$
 such that $$|\Theta(\mathfrak{w}_{j},\mathfrak{w}_{j+1})|=\pi$$ for all $j=0,\ldots,2n-1.$ Then
 by
Lemma \ref{coca} we have that
$$|\mathfrak{z}_j-\mathfrak{z}_{j+1}|\geq \frac{r_1m}{LM}|\mathfrak{z}_j|.$$
 Observe that  $|\mathfrak{z}_j-\mathfrak{z}_{j+1}|=|\mathfrak{z}_j|-|\mathfrak{z}_{j+1}|$, because $\mathfrak{z}_j$ and $\mathfrak{z}_{j+1}$ are contained in the segment $L$. Then
 $|\mathfrak{z}_j|-|\mathfrak{z}_{j+1}|\geq \frac{r_1m}{LM}|\mathfrak{z}_{j+1}|$, so
\begin{equation}\label{ajo} |\mathfrak{z}_{j+1}|\leq\frac{|\mathfrak{z}_j|}{1+\frac{r_1m}{LM}}\nonumber\end{equation}
and it follows  that
$$|\mathfrak{z}_{j}|\leq\frac{|\mathfrak{z}_0|}{(1+\frac{r_1m}{LM})^j}$$ for all $j=1,\ldots,2n$.
In particular, we have that
\begin{equation}
\label{pato}|z_2|\leq \frac{|z_1|}{(1+\frac{r_1m}{LM})^{2n}}.\end{equation}
  Recall that
$$M|z_2|\geq
|\mathfrak{h}(z_2)|=|\pi(\widetilde{w}_2)|=|(\widetilde{x}_2,\widetilde{t}_2\widetilde{x}_2)|>|\widetilde{x}_2|,$$ hence
\begin{equation}\label{pollo}(1/M)|\widetilde{x}_2|<|z_2|.\end{equation}
On the other hand:
$$m|z_1|\leq|\mathfrak{h}(z_1)|=|\pi(\widetilde{w}_1)|=|(\widetilde{x}_1,\widetilde{t}_1\widetilde{x}_1)|=|\widetilde{x}_1|+|\widetilde{t}_1||\widetilde{x}_1|<2|\widetilde{x}_1|$$
   and we obtain
\begin{equation}\label{perro}  |z_1|\leq (2/m)|\widetilde{x}_1|.\end{equation}
From \ref{pollo}, \ref{pato} and \ref{perro}:
$$(1/M)|\widetilde{x}_2|\leq |z_2|\leq \frac{|z_1|}{(1+\frac{r_1m}{LM})^{2n}}\leq \frac{(2/m)|\widetilde{x}_1|}{(1+\frac{r_1m}{LM})^{2n}},$$
hence $$|\widetilde{x}_2|\leq \frac{\frac{2M}{m}|\widetilde{x}_1|}{(1+\frac{r_1m}{LM})^{2n}}.$$
Finally,  we take $\mu_1>0$ be such that
$1+\mu_1=(1+\frac{r_1m}{LM})^2$. \qed

\begin{prop}\label{esa} There exists a constant  $K_1>0$ with the following property.
If $ \widetilde{w}_1=h(t_0,x_1)$ and $\widetilde{w}_2=h(t_0,x_2)$ are points in ${\mathcal{A}}_0$ such
that $|x_1|=|x_2|$,  then $$|\Theta(\widetilde{w}_1,\widetilde{w}_2)|\leq 2K_1\pi.$$
\end{prop}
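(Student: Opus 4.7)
The plan is to reduce the question to a counting argument on the short circular arc in $\mathbb{C}^2$ between the preimages of $\widetilde{w}_1$ and $\widetilde{w}_2$, and to limit the number of $\pi$-oscillations via Lemma \ref{coca}. Since $|x_1|=|x_2|$, I would write $x_2 = e^{i\alpha}x_1$ with $\alpha \in (-\pi,\pi]$ and take the short arc $\gamma(s) = (t_0, x_1 e^{is\alpha})$, $s \in [0,1]$, in ${{D_0}^*}$. Its projection $\pi \circ \gamma(s) = e^{is\alpha}z_1$, with $z_1 = (x_1, t_0 x_1)$, traces a simple arc $L_\alpha$ on the circle of radius $|z_1|$ in $\mathbb{C}^2$; and since $h = \pi^{-1}\mathfrak{h}\pi$ is a homeomorphism off $E$, the image $h \circ \gamma$ is a simple path in ${\mathcal{A}}_0$ from $\widetilde{w}_1$ to $\widetilde{w}_2$, along which $\Theta$ may legitimately be computed by path-independence.

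After possibly swapping $\widetilde{w}_1, \widetilde{w}_2$ one may assume $\Theta := \Theta(\widetilde{w}_1,\widetilde{w}_2) \ge 0$. Let $N = \lfloor \Theta/\pi \rfloor$ and decompose $\Theta = \underbrace{\pi + \cdots + \pi}_{N} + (\Theta - N\pi)$; the remark following the definition of $\Theta$ then produces ordered intermediate points $\mathfrak{w}_0 = \widetilde{w}_1, \mathfrak{w}_1, \ldots, \mathfrak{w}_{N+1} = \widetilde{w}_2$ on $h \circ \gamma$ with $|\Theta(\mathfrak{w}_{j-1},\mathfrak{w}_j)| = \pi$ for $j = 1,\ldots,N$. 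Setting $\mathfrak{z}_j := \mathfrak{h}^{-1}(\pi(\mathfrak{w}_j))$ and using $\mathfrak{h}^{-1}\pi h = \pi$ identifies each $\mathfrak{z}_j$ with a point on $L_\alpha$ in the inherited ordering, so $\mathfrak{z}_j = e^{i\theta_j}z_1$ with $0 = \theta_0 \le \theta_1 \le \cdots \le \theta_{N+1} = \alpha$ and $\theta_{N+1} \le \pi$.

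Then for each $j \in \{1,\ldots,N\}$, Lemma \ref{coca} applied to the pair $(\mathfrak{w}_{j-1},\mathfrak{w}_j)$ yields $|\mathfrak{z}_{j-1} - \mathfrak{z}_j| \ge \frac{r_1 m}{LM}|z_1|$, which combined with the trivial estimate $|e^{i\theta_j} - e^{i\theta_{j-1}}| \le \theta_j - \theta_{j-1}$ forces the angular spacing bound $\theta_j - \theta_{j-1} \ge c := \frac{r_1 m}{LM}$. Summing over $j = 1,\ldots,N$ and using $\theta_N \le \pi$ yields $N c \le \pi$, hence $\Theta < (N+1)\pi \le (\pi/c + 1)\pi$, and the proposition holds with $K_1 := \tfrac{1}{2}(\pi L M/(r_1 m) + 1)$. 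The only point requiring care is the legitimacy of restricting to the short arc (so that $|\alpha| \le \pi$): this hinges on the fact that the generator of $\pi_1({{D_0}^*})$ is sent by $h$ to a homology class that contributes nothing to $\Theta$, which is exactly the content of Proposition \ref{homologico basico} (its image lies in the $\tilde{\beta}$-direction, with constant first coordinate).
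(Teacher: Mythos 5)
Your proof is correct and follows essentially the same route as the paper: project to the circular arc $\{(\zeta,t_0\zeta):|\zeta|=|x_1|\}$ in $\mathbb{C}^2$, use the remark after the definition of $\Theta$ (justified by Proposition \ref{homologico basico}) to extract ordered points with $\Theta$-increments of $\pi$, apply Lemma \ref{coca} to get a uniform lower bound $\tfrac{r_1m}{LM}$ on the corresponding chord lengths, and bound the number of increments by the length of the circle. The only differences are cosmetic: you bound by the short arc (length $\pi$) rather than the full circumference $2\pi$, and you argue via $N=\lfloor\Theta/\pi\rfloor$ instead of the paper's contradiction with $|\Theta|>2n\pi$, yielding the same order of constant $K_1\sim \pi LM/(r_1m)$.
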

\proof Let $|x_1|=|x_2|=\rho$ and let $S\subset{\mathbb{C}}^2$ be the circle
$\{(\zeta,t_0\zeta): \zeta\in{\mathbb{C}}, |\zeta|=\rho\}$.  Let   $C\subset S$ be
an arc of  $S$ joining $z_1=(x_1,t_0x_1)$ and $z_2=(x_2,t_0x_2)$. Suppose that for some
$n\in\mathbb{N}$ we have $$|\Theta(\widetilde{w}_1-\widetilde{w}_2)|>2n\pi.$$ Then as in the proof of Proposition \ref{parte} there are ordered points $\mathfrak{z}_0<...<\mathfrak{z}_{2n}$ in $C$
with $\mathfrak{z}_0=z_1$, $\mathfrak{z}_{2n}\leq
z_2$ and  such that, if $\mathfrak{w}_j={h}\circ\pi^{-1}(\mathfrak{z}_j)$, then
$$|\Theta(\mathfrak{w}_j,\mathfrak{w}_{j+1})|=\pi$$
for all $j=0,\ldots,2n-1$. Then by Lemma
\ref{coca} we have
\begin{equation}\label{pico}|\mathfrak{z}_j-\mathfrak{z}_{j+1}|\geq \frac{r_1m}{LM}|\mathfrak{z}_{j+1}|\end{equation}
for all $z=0,\ldots,2n-1$. Since $\mathfrak{z}_{j}\in C$, then
$\mathfrak{z}_{j}=(\zeta_j, t_0\zeta_j)$ with $|\zeta_j|=\rho$ and therefore
$$|\mathfrak{z}_j-\mathfrak{z}_{j+1}|=|(\zeta_j-\zeta_{j+1},t_0(\zeta_j-\zeta_{j+1}))|=
|(1,t_0)||\zeta_j-\zeta_{j+1}|,$$
 $$|\mathfrak{z}_{j+1}|=|(\zeta_{j+1},t_0\zeta_{j+1})|=|(1,t_0)||\mathfrak{z}_{j+1}|=|(1,t_0)|\rho.$$
Thus, from \ref{pico} we have
$$|(1,t_0)||\zeta_j-\zeta_{j+1}|\geq \frac{r_1m}{LM}|(1,t_0)|
\rho,$$
$$|\zeta_j-\zeta_{j+1}|\geq \frac{r_1m}{LM}\rho,$$
hence
$$\sum_{j=0}^{
2n-1}|\zeta_j-\zeta_{j+1}|\geq 2n\frac{r_1m}{LM}\rho.$$
 Observe
that the sum above is less or equal than the length
  of the circle
$|\zeta|=\rho$ in ${\mathbb{C}}$, that is, $2\pi \rho$. Then
$$2n\frac{r_1m}{LM}\rho\le 2\pi \rho,$$
and so $$n\leq \frac{\pi LM}{r_1m}.$$ Thus, it is
sufficient to take an integer
$K_1> \frac{\pi LM}{r_1m}.$\qed

\begin{prop}\label{lem6}  There exists a constant $s_0>0$ such that for any $w=(t,x)\in \widetilde{V}$ with  $|x|\le \widetilde{\delta}_0/2$ we have that   $\widetilde{\phi}^{si}(w)$ is defined for $s\in [-s_0,s_0]$ and
$|\widetilde{\phi}_2^{si}(w)|\le 2|x|$ for all $s\in [-s_0,s_0]$.
\end{prop}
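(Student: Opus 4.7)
The key observation is that for purely imaginary time $T=si$, the first coordinate of the flow is simply $\widetilde{\phi}_1^{si}(t,x) = t e^{si}$, which has constant modulus $|t|$. Hence the trajectory $s\mapsto \widetilde{\phi}^{si}(w)$ cannot leave $\widetilde{V}$ through the $t$-boundary (the annulus $\frac{1}{L}r_1\le|t|\le Lr_2$); the only way it can exit $\widetilde{V}$ is through the $|x|$-boundary. So the whole statement reduces to controlling the growth of $|\widetilde{\phi}_2^{si}(w)|$.

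To control this growth, I would derive an ODE for $\log|\widetilde{\phi}_2^{si}(w)|$ as a function of the real parameter $s$. Differentiating the flow relation $\frac{d}{dT}\widetilde{\phi}_2^T = \widetilde{\phi}_2^T\cdot\widetilde{Q}(\widetilde{\phi}^T)$ along $T=si$ and taking the real part of the logarithmic derivative gives
\[
\frac{d}{ds}\log|\widetilde{\phi}_2^{si}(w)| \;=\; -\,\mathrm{Im}\,\widetilde{Q}\bigl(\widetilde{\phi}^{si}(w)\bigr).
\]
Since $\widetilde{Q}$ is holomorphic on the compact set $\overline{\widetilde{V}}$, it is bounded there by some constant $M_Q$, so as long as the trajectory remains in $\widetilde{V}$,
\[
\Bigl|\tfrac{d}{ds}\log|\widetilde{\phi}_2^{si}(w)|\Bigr| \le M_Q,
\]
whence $|\widetilde{\phi}_2^{si}(w)|\le |x|\,e^{M_Q|s|}$ by integration.

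Now define $s_0=\log 2/M_Q$, and let $w=(t,x)\in\widetilde{V}$ with $|x|\le \widetilde{\delta}_0/2$. A standard bootstrap (or maximality) argument then closes the loop: let $I$ be the maximal subinterval of $[-s_0,s_0]$ containing $0$ on which $\widetilde{\phi}^{si}(w)$ is defined and stays in $\widetilde{V}$. On $I$ the estimate above gives $|\widetilde{\phi}_2^{si}(w)|\le |x|e^{M_Q s_0}=2|x|\le \widetilde{\delta}_0$, while the first coordinate stays on the circle $|t|$; since both coordinates are bounded strictly inside $\widetilde{V}$, the flow extends past the endpoints of $I$ unless $I=[-s_0,s_0]$. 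Therefore $I=[-s_0,s_0]$ and the claimed inequality $|\widetilde{\phi}_2^{si}(w)|\le 2|x|$ holds on the whole interval.

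There is no real obstacle here beyond bookkeeping; the only delicate point is the bootstrap, since one needs the bound $|\widetilde{\phi}_2^{si}(w)|\le 2|x|$ to ensure the flow stays in $\widetilde{V}$, but one also needs the flow to stay in $\widetilde{V}$ to apply the bound. This is handled cleanly by a maximal-interval/continuity argument as above, using the fact that the $t$-coordinate never approaches the $t$-boundary of $\widetilde{V}$.
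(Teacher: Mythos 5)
Your proof is correct and takes essentially the same route as the paper: a Gronwall-type exponential estimate on the second coordinate along the purely imaginary-time flow (the paper derives it from Fact \ref{hecho}, working with $f(s)=|\widetilde{\phi}_2^{si}(w)|^2$ and $|\tau'(s)|=|t|\le Lr_2$, while you use the logarithmic derivative and the bound $|\widetilde{Q}|\le M_Q$), combined with a maximal-interval argument exploiting that the first coordinate keeps constant modulus. The only cosmetic difference is that the paper takes $s_0$ strictly smaller than $\log 2/\eta_1$ so the trajectory stays safely inside $\widetilde{V}$, whereas you take $s_0=\log 2/M_Q$ exactly; this still works because $\widetilde{Q}$ is holomorphic on $\overline{\widetilde{V}}$, and in any case shrinking $s_0$ slightly removes the issue.
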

\proof  Denote $\widetilde{\phi}^{si}(w)=(\tau(s),\alpha(s))$. If we set $\eta_1=\eta L r_2/2$ from Fact \ref{hecho} we have that $|\alpha'(s)|\le \eta_1|\alpha(s)|$. Define  $f(s)=|\alpha(s)|^2$. Then
$$ |f'(s)|\leq 2|\alpha'(s)||{\alpha}(s)|\le 2\eta_1 |{\alpha}(s)|^2,$$
$$|f'(s)|\leq 2\eta_1 f(s).$$
 Thus, if $\widetilde{\phi}^{si}(w)$ is defined and contained in $\widetilde{V}$ for all $s$  in an interval $I$, then we have that   $f(s)\leq f(0)e^{2\eta_1|s|}$ for all $s\in I$, hence $|\alpha(s)|\leq |x|e^{\eta_1|s|}$  for all $s\in I$. If $S>0$ is maximal such that $\widetilde{\phi}^{si}(w)$ is defined on $(-S,S)$ and contained in $\widetilde{V}$, then, either  $S=\infty$,
 or there exists a sequence $(s_k)$ with $|s_k|\rightarrow S$ such that   $|\alpha(s_k)|$ tends to $\widetilde{\delta}_0$. But $\lim |\alpha(s_k)|\leq \lim |x|e^{\eta_1|s_k|}$ as $s\rightarrow S$, hence
$\widetilde{\delta}_0\leq |x|e^{\eta_1 S}\le  (\widetilde{\delta}_0/2)e^{\eta_1 S}$ and therefore $S\ge \frac{\log2}{\eta_1}$. Finally it is sufficient to take $s_0<\frac{\log2}{\eta_1}$.\qed

By successively applications of Proposition \ref{lem6} we obtain:
 \begin{prop}\label{lem7} If  $w=(t,x)\in \widetilde{V}$ is such that $|x|\le\widetilde{\delta}_0/2^n$ with $n\in \mathbb{N}$, then   $\widetilde{\phi}^{si}(w)$ is defined for $s\in [-ns_0,ns_0]$ and
$|\widetilde{\phi}_2^{si}(w)|\le 2^n|x|$ for all $s\in [-ns_0,ns_0]$.
 \end{prop}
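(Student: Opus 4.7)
The proof will proceed by induction on $n$. The base case $n=1$ is precisely Proposition \ref{lem6}, so there is nothing to do.

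For the inductive step, suppose the statement holds for some $n\ge 1$, and let $w=(t,x)\in\widetilde{V}$ with $|x|\le\widetilde{\delta}_0/2^{n+1}$. Since $\widetilde{\delta}_0/2^{n+1}\le\widetilde{\delta}_0/2^n$, the inductive hypothesis applies and yields that $\widetilde{\phi}^{si}(w)$ is defined on $[-ns_0,ns_0]$ with $|\widetilde{\phi}_2^{si}(w)|\le 2^n|x|$ throughout. Consider the endpoint $w^{+}=\widetilde{\phi}^{ns_0i}(w)$. Its second coordinate satisfies $|\widetilde{\phi}_2^{ns_0i}(w)|\le 2^n|x|\le 2^n\cdot\widetilde{\delta}_0/2^{n+1}=\widetilde{\delta}_0/2$, so Proposition \ref{lem6} applies to $w^{+}$. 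It gives that $\widetilde{\phi}^{si}(w^{+})$ is defined on $[-s_0,s_0]$ with $|\widetilde{\phi}_2^{si}(w^{+})|\le 2|\widetilde{\phi}_2(w^{+})|\le 2^{n+1}|x|$. By the flow property $\widetilde{\phi}^{si}(w^{+})=\widetilde{\phi}^{(ns_0+s)i}(w)$, we conclude that $\widetilde{\phi}^{si}(w)$ extends continuously to $s\in[ns_0,(n+1)s_0]$ with the required bound $2^{n+1}|x|$. The symmetric argument at the other endpoint $w^{-}=\widetilde{\phi}^{-ns_0i}(w)$ gives the extension to $[-(n+1)s_0,-ns_0]$, completing the induction.

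The one thing to check in passing is that the entire trajectory $\{\widetilde{\phi}^{si}(w):|s|\le(n+1)s_0\}$ remains inside $\widetilde{V}$ so that Proposition \ref{lem6} is legitimately applicable at each stage. This is automatic: the formula $\widetilde{\phi}_1^{si}(t,x)=te^{si}$ shows that $|t|$ is preserved along imaginary time, while the inductive bound $|\widetilde{\phi}_2^{si}(w)|\le 2^{n+1}|x|\le\widetilde{\delta}_0$ keeps the second coordinate within the prescribed disc. There is no real obstacle in the argument; the statement is a clean iteration of the one-step estimate of Proposition \ref{lem6}.
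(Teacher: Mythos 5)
Your proof is correct and is exactly what the paper intends: the paper states the result with the one-line justification ``by successive applications of Proposition \ref{lem6}'', and your induction (apply the $n$-step bound, then Proposition \ref{lem6} at the endpoints $\widetilde{\phi}^{\pm ns_0 i}(w)$, glue via the flow property, and note that the bound $2^{n+1}|x|\le\widetilde{\delta}_0$ keeps the orbit in $\widetilde{V}$ while $|\widetilde{\phi}_1^{si}(w)|=|t|$ is preserved) is precisely that iteration spelled out.
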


 \subsection{\label{nonhyperbolic} Proof of Proposition \ref{control en teta}.} We start with a preparatory proposition.

\begin{prop}\label{lem8} There exist $\mu_2,\mu_3>0$ with the following property. If  $n \in \mathbb{N}$ and
  $w=(t,x)\in{\mathcal{A}}_0$ is such that $|x|\le\frac{\mu_2}{(1+\mu_1)^n}$,
   then $\widetilde{\phi}^{si}(w)$ is defined and $$|\widetilde{\phi}^{si}_2(w)|\le\mu_3\sqrt{|x|}$$ for  all $s\in[-2n\pi,2n\pi]$
\end{prop}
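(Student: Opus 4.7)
The plan is to control the flow $\widetilde\phi^{si}(w)$ on $[-2n\pi,2n\pi]$ by splitting the interval at the checkpoints $s_k=2k\pi$, $|k|\le n$. At each checkpoint the non-hyperbolic hypothesis on $\widetilde{\mathcal F}$ together with Proposition \ref{parte} controls the second coordinate; between checkpoints Proposition \ref{lem6} (iterated via Proposition \ref{lem7}) handles the short-time oscillations.

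First I would fix $\mu_3>0$ (to be constrained below) and then choose $\mu_2>0$ small enough that $\mu_3\sqrt{|x|}\le\widetilde{\delta}_0/2$ whenever $|x|\le\mu_2$. This ensures the trajectory stays in the regime where Proposition \ref{lem6} is applicable.

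The central step is the checkpoint bound
\[|\widetilde\phi_2^{2k\pi i}(w)|\le C|x|\qquad\text{for all }|k|\le n,\]
where $C$ depends only on the germ $\widetilde f$ of the target holonomy at $\{x=0\}$. Writing $w=(t,x)$, the flow property gives $\widetilde\phi^{2k\pi i}(w)=(t,\widetilde f_t^{\,k}(x))$, where $\widetilde f_t$ is the target holonomy on the transversal $\{t=\mathrm{const}\}$. Since $\widetilde f$ is non-hyperbolic, a Fatou--Leau analysis of $\widetilde f_t$ at the fixed point yields a uniform bound $|\widetilde f_t^{\,k}(x)|\le C|x|$ for $|k|$ up to at least $1/|x|^p$ for a suitable parabolic order $p$. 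Since the hypothesis $|x|\le\mu_2/(1+\mu_1)^n$ forces $n\le O(\log(1/|x|))\ll 1/|x|^p$, the required range $|k|\le n$ lies comfortably inside this non-escape regime.

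Between consecutive checkpoints I would apply Proposition \ref{lem7} with $N=\lceil 2\pi/s_0\rceil$ sub-steps, starting from the checkpoint value. This multiplies the bound by at most $2^N$, yielding $|\widetilde\phi_2^{si}(w)|\le 2^N C|x|$ throughout each subinterval, hence uniformly on $[-2n\pi,2n\pi]$. Finally, $2^N C|x|\le\mu_3\sqrt{|x|}$ reduces to $|x|\le(\mu_3/(2^N C))^2$, which is secured by taking $\mu_2\le(\mu_3/(2^N C))^2$.

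The main obstacle is the checkpoint estimate: for a general non-hyperbolic holomorphic germ $\widetilde f$, quantifying the non-escape of iterates uniformly in the transversal over the range $|k|\le n$ requires a careful Fatou--Leau petal analysis in the parabolic case and (in the Cremer case) an appeal to non-escape in small disks. The role of Proposition \ref{parte} is to reconcile the admitted scaling $|x|\le\mu_2/(1+\mu_1)^n$ with the non-escape regime of $\widetilde f$, so that the bilipschitz-induced contraction rate sets the correct dynamical window for the argument.
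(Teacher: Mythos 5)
Your overall architecture is the same as the paper's: checkpoints at the times $2k\pi i$, where the return map (the holonomy of the leaf $\{x=0\}$) controls the second coordinate, and Propositions \ref{lem6}--\ref{lem7} to bridge between consecutive checkpoints, with $\mu_2$ chosen small at the end. The genuine gap is in your central checkpoint estimate $|\widetilde{f}_t^{\,k}(x)|\le C|x|$ and the way you justify it. The claim that a Fatou--Leau analysis gives this bound uniformly ``for $|k|$ up to at least $1/|x|^p$'' is false: already for the parabolic model $f(x)=x/(1-x)$ one has $f^k(x)=x/(1-kx)$, so along the repelling direction the orbit reaches size of order $\sqrt{|x|}$, and even of order $1$, at times $k<1/|x|$; petals only give one-sided control and forward orbits need not stay comparable to $|x|$ on that whole range. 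In the Cremer case there is no linearization and no ``non-escape in small disks'' theorem of the form you invoke, so that branch of your argument has no support. Finally, to even write $\widetilde{\phi}^{2k\pi i}(w)=(t,\widetilde{f}_t^{\,k}(x))$ you need to know inductively that the previous iterates stay in the domain where the holonomy (and the flow, via Proposition \ref{lem7}) is defined; you gesture at this (``non-escape regime'') but the induction is exactly the content that must be carried out, and it is what the paper does carefully.

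The irony is that no parabolic/Cremer machinery is needed, and the paper's route shows why: since the multiplier $\lambda$ of the holonomy satisfies $|\lambda|=1$ (non-hyperbolic case), for $|x|\le\tilde{\delta}_3$ one has the elementary per-turn estimate $|\widetilde{\phi}_2^{\pm 2\pi i}(t,x)|\le\sqrt{1+\mu_1}\,|x|$, uniformly in $t$. Iterating, and using the hypothesis $|x|\le\mu_2/(1+\mu_1)^n$ both to keep every intermediate iterate below $\tilde{\delta}_3$ (so the induction closes and the flow is defined, with Proposition \ref{lem7} furnishing the base case $[-2\pi,2\pi]$) and to bound the accumulated factor, one gets $|\widetilde{\phi}_2^{2k\pi i}(w)|\le(1+\mu_1)^{n/2}|x|\le\sqrt{\mu_2}\sqrt{|x|}$ for $|k|\le n$; a last bounded factor $c$ for the fractional part of $s$ gives $\mu_3=c\sqrt{\mu_2}$. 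This is precisely where the $\sqrt{|x|}$ in the statement comes from --- the growth at the checkpoints is \emph{not} linear in $|x|$ in general, only controlled by the allowed number of turns $n=O(\log(1/|x|))$. Your stronger linear checkpoint bound is in fact salvageable for $k\le n$ by a bootstrap using $|f(x)|\le|x|+C_0|x|^2$, but as written your proof rests on an incorrect quantitative claim and an unavailable Cremer-case result, so the key step is not established.
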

\proof The holonomy of $\widetilde{\mathcal{F}}|_{\widetilde{V}}$ at the leaf $\{x=0\}$ is represented by the maps $$x\mapsto \widetilde{\phi}_2^{2\pi i}(t,x),$$ where  $L_1r_1\le |t|\le L_2r_2$. Let $\lambda$ be the multiplier of those maps at $x=0$. We can find ${\tilde{\delta}_3}\in(0,\widetilde{\delta}_0)$ such that, if $|x|\le{\tilde{\delta}_3}$ and $L_1r_1\le |t|\le L_2r_2$, then $$|\widetilde{\phi}_2^{ 2\pi i}(t,x)-\lambda x|\le(\sqrt{1+\mu_1}-1)|x|$$  and
$$|\widetilde{\phi}_2^{ -2\pi i}(t,x)-\lambda^{-1} x|\le(\sqrt{1+\mu_1}-1)|x|.$$ In this case,
 since $|\lambda|=1$ we have $$|\widetilde{\phi}_2^{\pm 2\pi i}(t,x)|\leq \sqrt{1+\mu_1}|x|.$$ Take a natural $n_0\ge \frac{2\pi}{s_0}$ and define $\mu_2=\frac{{\tilde{\delta}_3}}{2^{n_0}}$. We prove first that $|x|\le\frac{\mu_2 }{(1+\mu_1 )^{n}}$ implies that $\widetilde{\phi}^{is}(w)$ is defined for all $s\in[-2n\pi,2n\pi]$.
 We proceed by induction.
Suppose $|x|\le\frac{\mu_2}{1+\mu_1}$.  Then
$|x|\le\frac{{\tilde{\delta}_3}}{2^{n_0}}$ and by Proposition \ref{lem7} we have that
$\widetilde{\phi}^{is}(t,x)$ is defined for all $s\in [-n_0s_0,n_0s_0]\supset[-2\pi,2\pi]$, which proves case $n=1$.
Now, suppose that $|x|\le\frac{\mu_2 }{(1+\mu_1 )^{n}}$ implies that $\widetilde{\phi}^{is}(w)$ is defined for all $s\in[-2n\pi,2n\pi]$  and let $w=(t,x)$ be such that
$|x|\le\frac{\mu_2 }{(1+\mu_1 )^{n+1}}$. Then
$|x|\le\frac{\mu_2 }{(1+\mu_1 )^{n}}$ and by induction
hypothesis we have that $\widetilde{\phi}^{is}(w)$ is defined for all $s\in[-2n\pi,2n\pi]$.
Since $|x|\le{\tilde{\delta}_3}$ we have
$$|\widetilde{\phi}^{2\pi i}_2(w)|\le\sqrt{1+\mu_1} |x|\le\frac{\mu_2 }{(1+\mu_1 )^{n+\frac{1}{2}}}\le{\tilde{\delta}_3}.$$
Again, since $|\widetilde{\phi}^{2\pi i}_2(w)|\le{\tilde{\delta}_3}$ we have
$$|\widetilde{\phi}^{4\pi i}_2(w)|=|\widetilde{\phi}^{2\pi i}_2(\widetilde{\phi}^{2\pi i}(w))|\le\sqrt{1+\mu_1}|\widetilde{\phi}^{2\pi i}_2(t,x)|\le \sqrt{1+\mu_1}^2|x|     \le \frac{\mu_2 }{(1+\mu_1 )^{n}}\le{\tilde{\delta}_3}.$$
If we proceed successively we obtain
$$|\widetilde{\phi}^{2n\pi i}_2(w)|\le\sqrt{1+\mu_1}^n|x| \le\frac{\mu_2 }{(1+\mu_1 )^{\frac{n}{2}+1}}\le{\tilde{\delta}_3}.$$
Then, if
$w_1=\widetilde{\phi}^{2n\pi i}(w)$, by  case $n=1$ we have that $\widetilde{\phi}^{is}(w_1)$ is defined for all $s\in[-2\pi,2\pi]$.
From this and the flow property we deduce that $\widetilde{\phi}^{is}(w)$ is defined for all $s\in[-2n\pi,2(n+1)\pi]$. In the same way we prove that $\widetilde{\phi}^{is}(w)$ is defined for all $s\in[-2(n+1)\pi,2n\pi]$. On the other hand, there is a constant $c>0$ such that $|\widetilde{\phi}_2^{is}(t,x)|\le c|x|$ for all $s\in [-2\pi,2\pi]$, $|x|\le\tilde{\delta}_3$. Suppose $|x|\le\frac{\mu_2 }{(1+\mu_1 )^{n}}$ and let $s\in[0,2n\pi]$. The case  $s\in[-2n\pi,0]$ is similar. Put $s=2\pi k+\xi$  with $k\in\mathbb{Z}$ and $\xi\in[0,2\pi)$. Then
$$|\tilde{\phi}_2^{si}(w)|\le |\tilde{\phi}_2^{\xi i}\tilde{\phi}_2^{2\pi k i}(w)|\le c|\tilde{\phi}_2^{2\pi k i}(w)|\le c\sqrt{1+\mu_1}^k|x|.$$
But $|x|\le\frac{\mu_2 }{(1+\mu_1 )^{n}}$ implies  $$\sqrt{1+\mu_1}^k\le \sqrt{1+\mu_1}^n\le\frac{\sqrt{\mu_2}}{\sqrt{|x|}},$$ and therefore $$|\tilde{\phi}_2^{si}(w)|\le c\sqrt{\mu_2}\sqrt{|x|},$$ hence we set $\mu_3=c\sqrt{\mu_2}$.

\noindent\textit{Proof of Proposition \ref{control en teta}}.
Take $\delta_1\in(0,\delta_0)$ such that ${{\mathcal{A}_1}}$ is contained in the set $$\{(t,x)\in\widetilde{V}:|x|\le\frac{m\mu_2}{2M(1+\mu_1)^{K_1+1}}\}.$$
Let $\widetilde{w}_1=h(w_1),\widetilde{w}_2=h(w_2)\in{{\mathcal{A}_1}}$, put $w_1 =(t_0,x_1)$, $w_2=(t_0,x_2)$, $\widetilde{w}_1=(\tilde{t}_1,\tilde{x}_1)$, $\widetilde{w}_2=(\tilde{t}_2,\tilde{x}_2)$ and assume $|x_1|\le |x_2|$.
 Take $w'=(t_0,x')$ with $|x'|=|x_2|$ and such that
$x_1=\lambda x'$ for $\lambda \in (0,1]$.
Since $$|\tilde{x}_1|\le\frac{m\mu_2}{2M(1+\mu_1)^{K_1+1}}\le\frac{\mu_2}{(1+\mu_1)^{K_1+1}},$$
 by Proposition \ref{lem8} we have that
 $\widetilde{\phi}^{is}(w_1)$ is defined for $|s|\le 2(K_1+1)\pi$. Then we can assume $|\Theta(\widetilde{w}_1,\widetilde{w}_2)|>2(K_1+1)\pi$. But $\Theta(\widetilde{w}_1,\widetilde{w}_2)=\Theta(\widetilde{w}_1,\widetilde{w}')+\Theta(\widetilde{w}',\widetilde{w}_2)$
 and $|\Theta(\widetilde{w}',\widetilde{w}_2)|\le 2K_1\pi$ by Proposition \ref{esa}, so we have $|\Theta(\widetilde{w}_1,\widetilde{w}')|=2\kappa\pi$ with $\kappa>1$.
Let $n\in\mathbb{N}$
be such that  $n\leq\kappa<n+1$. It follows from the mean
value theorem that there exists $w''=(t_0,x'')$ with $x''$ in the segment
joining $x_1$ and $x'$ such that $|\Theta(\widetilde{w}_1,\widetilde{w}'')|=2n\pi$, where $\widetilde{w}''=(\tilde{t}'',\tilde{x}'')=h(w'')$.
Then, by Proposition \ref{parte} we have $$|\tilde{x}_1|\leq
\frac{(2M/m)|\tilde{x}''|}{(1+\mu_1)^n}.$$ Since $|x''|\in\mathcal{A}_1$ we have
that
$$|\tilde{x}''|\le\frac{m\mu_2}{2M(1+\mu_1)^{K_1+1}}.$$
Then
$$|\tilde{x}_1|\le\frac{(2M/m)}{(1+{\mu}_1)^n}\frac{m{\mu}_2}{2M(1+{\mu}_1)^{K_1+1}}=\frac{{\mu}_2}{(1+{\mu}_1)^{n+K_1+1}},$$
so
by Proposition \ref{lem8} we have that $\widetilde{\phi}^{is}({\tilde{w}}_1)$ is defined and $|\widetilde{\phi}^{is}_2(\tilde{w}_1)|\le\mu_3\sqrt{|x|}$ for
$|s|\le 2(n+K_1+1)\pi$.
   Finally, it suffices to observe that
   $$|\Theta({\tilde{w}}_1,{\tilde{w}}_2)|\le |\Theta({\tilde{w}}_1,{\tilde{w}}')|+|\Theta({\tilde{w}}',{\tilde{w}}_2)|\leq 2\kappa\pi+ 2K_1\pi\le 2(n+K_1+1)\pi.$$

   \subsection{\label{sec redressing}Proof of Proposition \ref{redressing}}

  Set $$D^{*}=\{(t_{0},x):0<|x|<\delta_{1}\},$$
fix $\tilde{w}_{2}\in\mathcal{A}_{1}=h(D^*)$ and define  $\sigma_{1}(\tilde{w}_{1})=\Theta(\tilde{w}_{1},\tilde{w}_{2})$ for all $\tilde{w}_{1}\in\mathcal{A}_{1}$. By Proposition \ref{control en teta} we have that $\tilde{\phi}^{s i}(\tilde{w}_{1})$
is defined  and $|\tilde{\phi}^{s i}_2(\tilde{w}_{1})|\le \mu_3\sqrt{|\tilde{x}_1}|$ for $|s|\le|\sigma_1(\tilde{w}_1)|$. Clearly
we can assume $\delta_1$ to be  small enough such that, for each $\varsigma\in\mathbb{S}^1$, the radial segment $$\{(s\varsigma ,0):\frac{1}{L}r_{1}\le s\le Lr_{2}\}$$
can be lifted to the leaf of $\mathcal{F}|_{\widetilde{V}}$ through
any $\tilde{\phi}^{s i}(\tilde{w}_{1})$, $|s|\le|\sigma_1(\tilde{w}_1)|$. Then, if $\tilde{w}_{1}=(\tilde{t}_{1},\tilde{x}_{1})$,
we have that $\tilde{\phi}^{T}(\tilde{w}_{1})$ is defined for all
$T$ in the rectangle \[
\left\{ \tau+\theta i:\ln\frac{r_{1}}{L|\tilde{t}_{1}|}\le\tau\le\ln\frac{Lr_{2}}{|\tilde{t}_{1}|},\,|\theta|\le|\sigma_{1}(\tilde{w}_{1})|\right\}.\]
Set $\mathcal{A}=\bar{h}(D^{*})$. By Proposition \ref{regularizado}
we have:
\begin{enumerate}
\item  $\mathcal{A}\subset\widetilde{T}$
\item Any  $\tilde{w}\in\mathcal{A}$
can be expressed as $\tilde{w}=\tilde{\phi}^{\overline{T}(\tilde{w}_{1})}(\tilde{w}_{1})$
for a unique $\tilde{w}_{1}=(\tilde{t}_{1},\tilde{x}_{1})$  in $\mathcal{A}_{1}$
\item $\mbox{Im}(\overline{T})$ is bounded by the constant $\tilde{\theta}_{1}>0$.
\end{enumerate}
Clearly we can assume that $\tilde{\phi}^{T}(\tilde{w}_{1})$ is defined
for all $T$ in the rectangle\[
\left\{\tau+\theta i:\ln\frac{\frac{1}{L}r_{1}}{|\tilde{t}_{1}|}\le\tau\le\ln\frac{Lr_{2}}{|\tilde{t}_{1}|},\,|\theta|\le\tilde{\theta}_{1}\right\}.\]
Then, for any $\tilde{w}=(\tilde{t},\tilde{x})\in\mathcal{A}$ with
\foreignlanguage{english}{$\tilde{w}=\tilde{\phi}^{\overline{T}(\tilde{w}_{1})}(\tilde{w}_{1})$},
by the flow property we deduce that $\tilde{\phi}^{T}(\tilde{w})$
is defined for all $T$ in the rectangle\[
\{\tau+\theta i:\ln\frac{\frac{1}{L}r_{1}}{|\tilde{t}|}\le\tau\le\ln\frac{Lr_{2}}{|\tilde{t}|},\,
\theta=s[\sigma_{1}(\tilde{w}_{1})-\mbox{Im}\overline{T}(\tilde{w}_1)],\: s\in[0,1]\}.\]
then, if we define $\sigma(\tilde{w})=\sigma_{1}(\tilde{w}_{1})-\mbox{Im}\overline{T}(\tilde{w}_1)$
we see that $\tilde{\phi}^{s\sigma(\tilde{w})i}(\tilde{w})$ is defined
for all $s\in[0,1]$. By the definition of $\Theta$ it is easy to
see that $f(\tilde{w}):=\tilde{\phi}^{\sigma(\tilde{w})i}(\tilde{w})$
is contained in a Hopf fiber $\widetilde{D}=\{(\tilde{t}_{0},x):|x|<\tilde{\delta}_{0}\}$
with $|\tilde{t}_{0}|=\sqrt{r_{1}r_{2}}$. Denote $f(\tilde{w})=(f_1(\tilde{w}),f_2(\tilde{w}))$ and observe that $$f(\tilde{w})=\tilde{\phi}^{\mbox{Re}\overline{T}(\tilde{w}_1)}\tilde{\phi}^{\sigma_1(\tilde{w}_1)i}(\tilde{w}_1).$$
Since $|\mbox{Re}\overline{T}|$ is bounded we find a constant $\mu_4>0$  such that
$$|f_2(\tilde{w})|=|\tilde{\phi}_2^{\mbox{Re}\overline{T}(\tilde{w}_1)}\tilde{\phi}^{\sigma_1(\tilde{w}_1)i}(\tilde{w}_1)|\le \mu_4|\tilde{\phi}^{\sigma_1(\tilde{w}_1)i}(\tilde{w}_1)|\le \mu_4\mu_3\sqrt{|\tilde{x}_1|}.$$
 Then, since $|\tilde{x}_1|\rightarrow 0$ as $|\tilde{x}|\rightarrow 0$, we deduce that
 $f(\tilde{w})\rightarrow o\in\widetilde{D}$ as $\tilde{w}$ tends to the exceptional divisor.
It remains to prove that $f$
is injective.
Suppose that $f({\tilde{w}})=f({\tilde{w}'})$. Let $\gamma:[0,1]\mapsto{\mathcal{A}}$
be a curve joining ${\tilde{w}}$ and ${\tilde{w}'}$. Consider the paths
 $$\alpha(s)={\tilde{\phi}}^{(1-s){\sigma}({\tilde{w}})}({\tilde{w}}),\,s\in[0,1]$$ and $$\beta(s)={\tilde{\phi}}^{s{\sigma}({\tilde{w}'})}({\tilde{w}'}),\, s\in[0,1].$$
Let $\vartheta$ be the closed path $\gamma*\beta*\alpha$.
For $t\in[0,1]$ define the paths $$\gamma_t(s)={\tilde{\phi}}^{t{\sigma}\circ\gamma(s)}(\gamma(s)),\, s\in[0,1],$$ $$\alpha_t(s)={\tilde{\phi}}^{(1-s+ts){\sigma}({\tilde{w}})}({\tilde{w}}),\, s\in[0,1]$$
and $$\beta_t(s)={\tilde{\phi}}^{(s+t(1-s)){\sigma}({\tilde{w}'})}({\tilde{w}'}),\, s\in[0,1].$$ It is easy to see
that $\gamma_{t}*\beta_{t}*\alpha_{t}$ define a homotopy between
$\vartheta$ and a path contained in ${\widetilde{D}}$. Then $\vartheta$ does not
link the set $\{t=0\}$ and therefore, by Proposition \ref{homologico basico},
the path ${\bar{h}}^{-1}(\vartheta)$ does not link $\{t=0\}$ . Observe that
the path ${\bar{h}}^{-1}(\vartheta)$ has the part ${{\bar{h}}}^{-1}(\gamma)$ contained
in ${D^*}$. The other part ${{\bar{h}}}^{-1}(\beta*\alpha)$ is a path contained
in a leaf of the foliation $\mathcal{F}|_V$.
Since ${{\bar{h}}}^{-1}(\beta*\alpha)$ joins ${{\bar{h}}}^{-1}({\tilde{w}})$ and ${{\bar{h}}}^{-1}({\tilde{w}'})$
(points in ${D^*}$) we have that ${{\bar{h}}}^{-1}({\tilde{w}})=g({{\bar{h}}}^{-1}({\tilde{w}'}))$, where
$g$ is the holonomy map associated to the projection of ${\bar{h}}^{-1}(\vartheta)$
in $\{x=0\}$. Then, since ${\bar{h}}^{-1}(\vartheta)$ does not link $\{t=0\}$,
we have that $g=\textrm{id}$, hence ${\tilde{w}}={\tilde{w}'}$.

\section{Proof of Proposition \ref{redressing} in the hyperbolic case}\label{hyperbolic}
In this section we assume that the holonomy of  $\widetilde{\mathcal{F}}|_{\overline{\widetilde{V}}}$ at the leaf $\{x=0\}$ is hyperbolic. Let $D_0^*$ as in section \ref{redressing nh} and put  $\mathcal{A}_0=\bar{h}(D_0^*)$. For $\tilde{w}_1,\tilde{w}_2\in\mathcal{A}_0$ define $\Theta(\tilde{w}_1,\tilde{w}_2)$ as in Section \ref{redressing nh}, fix $\tilde{w}_2\in\mathcal{A}_0$ and define $\sigma(\tilde{w})=\Theta(\tilde{w},\tilde{w}_2)$.
Given $z\in\mathcal{A}_0$ take a complex disc $\Sigma_{z}$ passing
through $z$ and transverse to $\tilde{\mathcal{F}}$. In a neighborhood
$U_{z}$ of $z$ is well defined a leaf preserving projection $\pi_{z}:U_{z}\mapsto\Sigma_{z}$.
It is not difficult to prove, since $\mathcal{A}_0$ is a continuous
transversal to $\tilde{\mathcal{F}}$, that in a small neighborhood $\Delta_{z}$
of $z$ in $\mathcal{A}_0$ the restriction $\pi_{z}:\Delta_{z}\mapsto\Sigma_{z}$
is a homeomorphism onto its image. The charts $\lbrace\pi_{z}\rbrace_{z\in\mathcal{A}_0}$
define a natural complex structure on $\mathcal{A}_0$. Then $\mathcal{A}_0$ is analytically equivalent
to an annulus $$\{z\in\mathbb{C}:0\leq r<|z|\leq1\}$$ for some $r\geq0$.
The holonomy of  $\widetilde{\mathcal{F}}|_{\overline{\widetilde{V}}}$ at the leaf $\{x=0\}$ is represented by a contractive function
$g:{D}_0\mapsto {D}_0$, where $$D_0=\{(t_0,x):|x|<\delta_0\}.$$
Consider the map $\tilde{g}={\bar{h}}\circ g\circ{\bar{h}}^{-1}$.
Clearly $\tilde{g}:\mathcal{A}_0\mapsto\mathcal{A}_0$ is not trivial at homology
level and it is holomorphic, because it is continuous and leaf preserving.
Then, since $g'$ is not an isomorphism, it follows from the annulus
theorem (see \cite{remmert}, p. 211) that $r=0$ and $\mathcal{A}_0$ is therefore
analytically equivalent to a punctured disc.

By using linearizing coordinates we may assume that the foliation
$\widetilde{\mathcal{F}}|_{\widetilde{T}}$ extends to the set $$\widetilde{T}_\infty=\{(t,x):|t|=\sqrt{r_1r_2},x\in\mathbb{C}\}$$
and  is the suspension of a hyperbolic automorphism of $\mathbb{C}$. In this case $\tilde{\phi}^T(w)$ is defined for all ${w}\in {\widetilde{T}_\infty}$ and all $T\in\mathbb{R}$. Then   $f(\tilde{w}):=\tilde{\phi}^{\sigma(\tilde{w})}(\tilde{w})$ is well defined and it is contained in the Hopf fiber $$\widetilde{D}_\infty =\lbrace(\tilde{t}_0,x):x\in\mathbb{C}\rbrace$$ through $\tilde{w}_2$ for all $\tilde{w}\in\mathcal{A}_0$.

 Observe that $f:\mathcal{A}_0\rightarrow \widetilde{D}_\infty$ is holomorphic,
because it is a continuous leaf preserving map. Identifying $\mathcal{A}_0$
with $\mathbb{D}\backslash\{0\}$ and $\widetilde{D}_\infty$ with $\mathbb{C}$ we have by  Riemann Extension
Theorem that $f$ extends to a holomorphic map $f:\mathbb{D}\mapsto\mathbb{C}$,
$f(0)=0$. Then $f(\tilde{w})$ tends to $o=(\tilde{t}_0,0)$ as $\tilde{w}$ tends to the exceptional divisor. Since $\widetilde{\mathcal{F}}|_{\widetilde{T}_\infty}$ is the suspension of a hyperbolic
automorphism of $\mathbb{C}$, there exists a set $\mathfrak{T}\subset \widetilde{T}$
such that
\begin{enumerate}
\item $\mathfrak{T}$ contains all segment of orbit with endpoints in $\mathfrak{T}$
\item $\mathfrak{T}$ contains the set $$\lbrace(t,x):|t|=\sqrt{r_1r_2},|x|<\epsilon\rbrace$$
for some $\epsilon>0$.
\end{enumerate}
Since $f(0)=0$, we can set  $$D^*=\{(t_0,x):|x|<\delta_1\}$$ with $\delta_1>0$ such that $\mathcal{A}=\bar{h}(D^*)$  and $f(\mathcal{A})$ are contained in $\mathfrak{T}$.
The proof of the injectivity of $f|_{\mathcal{A}}$
given in \ref{sec redressing}  also works in this case and Proposition \ref{redressing} follows.

\section{Proof of Proposition \ref{geodesica transversal} and Lema \ref{lema empuja geodesica}}\label{potencial theory}
  Given a proper subdomain $D\subset\overline{\mathbb{C}}$  we denote by
$\omega_D(z)$ the harmonic measure with pole at $z\in D$. Recall that
 $\omega_D(z)$ is a probability measure on $\partial D$ and, fixed a Borel subset $B$ of $\partial D$, the function $z\mapsto \omega_D(z,B)$ is harmonic on $D$. We will use the following subordination principle.
 \begin{thm}\label{subordination principle}
 Let $D_1$ and $D_2$ be domains in $\overline{\mathbb{C}}$ with non-polar boundaries. Suppose that $D_1\subset D_2$ and let $B$ be  a Borel subset of $\partial D_1\cap\partial D_2$. Then $$\omega_{D_1}(z,B)\le \omega_{D_2}(z,B) \mbox{ for all }z\in D_1.$$ Moreover, if $\omega_{D_1}(z,B)>0$ the equality holds only if $D_1=D_2$.
 \end{thm}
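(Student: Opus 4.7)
The plan is to apply the maximum principle to the bounded harmonic function
\[
u(z):=\omega_{D_{2}}(z,B)-\omega_{D_{1}}(z,B),\qquad z\in D_{1}.
\]
Since $D_{1}\subset D_{2}$, the function $\omega_{D_{2}}(\,\cdot\,,B)$ is harmonic on all of $D_{2}$ and in particular on $D_{1}$; $\omega_{D_{1}}(\,\cdot\,,B)$ is harmonic on $D_{1}$ by definition. Hence $u$ is harmonic and bounded (indeed $|u|\le 1$) on $D_{1}$.

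Next I would analyze the boundary behaviour at points $\zeta\in\partial D_{1}$ which are regular in the potential-theoretic sense; by Kellogg's theorem these exhaust $\partial D_{1}$ outside a polar set. There are three cases. If $\zeta\in B\subset\partial D_{1}\cap\partial D_{2}$, then $\zeta$ is regular for both domains (quasi everywhere), so both $\omega_{D_{j}}(z,B)\to 1$ as $z\to\zeta$ inside $D_{1}$, giving $u(z)\to 0$. If $\zeta\in(\partial D_{1}\cap\partial D_{2})\setminus B$, both harmonic measures tend to $0$, again giving $u\to 0$. Finally, if $\zeta\in\partial D_{1}\setminus\partial D_{2}$, then $\zeta$ lies in the open set $D_{2}$, so $\omega_{D_{2}}(\,\cdot\,,B)$ extends harmonically across $\zeta$ and is non-negative there, while $\omega_{D_{1}}(z,B)\to 0$; therefore $\liminf_{z\to\zeta}u(z)\ge 0$. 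Since $\partial D_{1}$ is non-polar, the generalized maximum principle for bounded harmonic functions (applied to $-u$) yields $u\ge 0$ on $D_{1}$, which is the desired inequality.

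For the equality statement, assume $\omega_{D_{1}}(z_{0},B)=\omega_{D_{2}}(z_{0},B)$ at some $z_{0}\in D_{1}$ with the common value positive, so $u(z_{0})=0$. The strong maximum principle for the non-negative harmonic function $u$ forces $u\equiv 0$ on the connected component $D_{1}^{\ast}$ of $D_{1}$ that contains $z_{0}$. Suppose toward a contradiction that $D_{1}\subsetneq D_{2}$; arguing componentwise we may suppose $D_{1}^{\ast}\subsetneq D_{2}^{\ast}$, where $D_{2}^{\ast}$ is the component of $D_{2}$ containing $z_{0}$. Connectedness of $D_{2}^{\ast}$ then yields a point $\zeta\in\partial D_{1}^{\ast}\cap D_{2}^{\ast}$, and in fact $\partial D_{1}^{\ast}\cap D_{2}^{\ast}$ is non-polar. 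At quasi every such $\zeta$, $\omega_{D_{1}}(\,\cdot\,,B)$ has boundary value $0$, while $\omega_{D_{2}}(\,\cdot\,,B)$ is harmonic across $\zeta$; the identity $u\equiv 0$ on $D_{1}^{\ast}$ therefore forces $\omega_{D_{2}}(\,\cdot\,,B)$ to vanish on a non-polar subset of $D_{2}^{\ast}$. By the identity principle for harmonic functions this means $\omega_{D_{2}}(\,\cdot\,,B)\equiv 0$ on $D_{2}^{\ast}$, contradicting $\omega_{D_{2}}(z_{0},B)>0$. Hence $D_{1}=D_{2}$.

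The main obstacle I anticipate is the equality step: rigorously extracting from the fibrewise comparison $u\equiv 0$ that $\omega_{D_{2}}(\,\cdot\,,B)$ must vanish on a non-polar set inside $D_{2}^{\ast}$. This requires a careful use of the fact that $\partial D_{1}^{\ast}\cap D_{2}^{\ast}$ cannot be polar (otherwise $D_{1}^{\ast}$ and $D_{2}^{\ast}$ would coincide up to a polar set, but both are open, so they would agree) together with the appropriate regularity of boundary points for the harmonic measure of $D_{1}^{\ast}$. The inequality itself, on the other hand, is a routine application of the maximum principle once the three-case boundary analysis is in place.
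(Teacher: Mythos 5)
Your reduction to the maximum principle breaks down at the boundary analysis, and the failure is not repairable as stated. Kellogg's theorem does give regularity of quasi-every $\zeta\in\partial D_{1}$, but regularity only guarantees $\omega_{D}(z,B)=H_{D}1_{B}(z)\to 1_{B}(\zeta)$ when the boundary datum $1_{B}$ is \emph{continuous} at $\zeta$ relative to $\partial D$; at points of the relative boundary of $B$ inside $\partial D_{1}\cap\partial D_{2}$ no such limit need exist, and this exceptional set can be very far from polar. Concretely, let $D_{1}=D_{2}=\mathbb{D}$ and let $B\subset\partial\mathbb{D}$ be a compact set of positive length with empty interior in $\partial\mathbb{D}$ (a fat Cantor set): complementary arcs accumulate at every $\zeta\in B$, and taking $z$ at distance comparable to the length of such an arc, in front of it, gives $\omega_{\mathbb{D}}(z,B)\le 1-c_{0}$ for a universal $c_{0}>0$; hence $\liminf_{z\to\zeta}\omega_{\mathbb{D}}(z,B)\le 1-c_{0}<1$ at \emph{every} point of the non-polar set $B$. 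So your Cases 1 and 2 (both harmonic measures tend to $1$, resp.\ $0$, at quasi-every such $\zeta$) are false, the generalized maximum principle cannot be invoked for $u$, and one cannot salvage $\liminf u\ge 0$ at these points without in effect assuming the theorem. The standard proof (the one behind the reference the paper cites; the paper itself gives no argument) avoids boundary limits of harmonic measures entirely: every member $v$ of the Perron upper class for $1_{B}$ on $D_{2}$ is $\ge 0$ by the minimum principle, and its restriction to $D_{1}$ lies in the upper class for $1_{B}$ on $D_{1}$ --- at $\zeta\in\partial D_{1}\cap\partial D_{2}$ the liminf over $D_{1}$ dominates the liminf over $D_{2}$, while at $\zeta\in\partial D_{1}\cap D_{2}$ lower semicontinuity and $v\ge 0=1_{B}(\zeta)$ suffice; taking infima gives $\overline{H}_{D_{1}}1_{B}\le\overline{H}_{D_{2}}1_{B}$, and resolutivity of bounded Borel functions identifies these upper solutions with $\omega_{D_{1}}(\cdot,B)$ and $\omega_{D_{2}}(\cdot,B)$.

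In the equality step there is a second, smaller error: your parenthetical claim that if $\partial D_{1}^{\ast}\cap D_{2}^{\ast}$ were polar then the two open sets would have to coincide is false, as $\mathbb{D}\setminus\{0\}$ and $\mathbb{D}$ show. In fact $D_{1}=\mathbb{D}\setminus\{0\}$, $D_{2}=\mathbb{D}$, $B=\partial\mathbb{D}$ gives $\omega_{D_{1}}(z,B)=\omega_{D_{2}}(z,B)=1>0$ with $D_{1}\neq D_{2}$, so the equality clause is only valid under an extra hypothesis such as $D_{2}\setminus D_{1}$ (equivalently $\partial D_{1}\cap D_{2}$) being non-polar --- which does hold in the paper's application to nested Jordan regions. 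Granting that hypothesis, the rest of your equality argument is essentially sound, and simpler than you make it: at a regular point $\zeta\in\partial D_{1}\cap D_{2}$ the identity $u\equiv 0$ forces $\omega_{D_{2}}(\zeta,B)=0$ at an \emph{interior} point of $D_{2}$, and a single interior zero of the nonnegative harmonic function $\omega_{D_{2}}(\cdot,B)$ already makes it vanish identically by the minimum principle; no identity principle on a non-polar set is needed.
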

 For the proof of this result and general background on harmonic measures we refer to \cite{ransford}.
\subsection{Proof of Lemma \ref{lema empuja geodesica}}
It is known that the geodesic $\tilde{\gamma}$ is given by the level set $$\omega_{\tilde{U}}(z,C)=\frac{1}{2}.$$
By Theorem \ref{subordination principle} we have $\omega_U(z,C)<\omega_{\widetilde{U}}(z,C)$ for all $z\in U$. Then for $z$ in the geodesic $\widetilde{\gamma}$  we have $\omega_U(z,C)<\frac 1 2$. But it is easy to see that $\omega_U(z,C)\ge \frac{1}{2}$ for all $z\in\overline{\Omega}\cap U$ and the lemma follows.
\subsection{Proof of Proposition \ref{geodesica transversal}}
Let $\widetilde{D}$ be the interior domain of $\widetilde{T}_{\mathfrak{w}}(R)$ and let  $\widetilde{C}=\widetilde{T}_{\mathfrak{w}}(\partial R^+)$ where $\partial R^+=\{\tau+\theta i\in\partial R: \theta\ge 0\}$. In $\widetilde{D}$, the geodesic $\Gamma_{\mathfrak{w}}$ is defined by the level set
$$\omega_{\widetilde{D}}(z,\widetilde{C})=\frac{1}{2}.$$

Provided $\theta_0$ is big enough there are real numbers ${\alpha_1},{\alpha_2}$ with $|{\alpha_1}|,|{\alpha_2}|\le \theta_0$ such that \footnote{Here $\vartheta_1$ is the function given by  Proposition \ref{creciente-decreciente}}:
\begin{enumerate}
\item $\widetilde{T}_{\mathfrak{w}}({\alpha_1}i)=\tau_1+\vartheta_1(0)i $, where $\tau_1$  is maximal wi
th $$ -2\ln L\le \tau_1\le 2\ln L.$$
\item $\widetilde{T}_{\mathfrak{w}}(\ln\frac{r_2}{r_1}+{\alpha_2}i)=\tau_2+\vartheta_1(0)i $, where $\tau_2$ is minimal with $$ \ln\frac{r_2}{r_1}-2\ln L\le \tau_2\le \ln\frac{r_2}{r_1}+2\ln L.$$
\end{enumerate}

 From  item (1) above we obtain $$\vartheta_1(0)=\tilde{\theta}(\mathfrak{w},{\alpha_1}i).$$ Then by Proposition
\ref{creciente-decreciente} we have
$$\vartheta_1(0)=\tilde{\theta}(\mathfrak{w},{\alpha_1}i)\ge \vartheta_1({\alpha_1}),$$ so
 ${\alpha_1}\le 0$ because $\vartheta_1$ is increasing. In the same way we have  ${\alpha_2}\le 0$.
 Then, if $C_1$ is the superior segment in $\partial R$ defined by the points ${\alpha_1}i$ and  $(\ln\frac{r_2}{r_1}+{\alpha_2}i)$, we have $\partial R^+\subset C_1$ and therefore $\widetilde{C}_1:=\widetilde{T}_{\mathfrak{w}}(C_1)$ contains $\widetilde{C}$. Therefore
 \begin{equation}
 \omega_{\widetilde{D}}(z,\widetilde{C})\le \omega_{\widetilde{D}}(z,\widetilde{C}_1)\mbox{ for all }z\in \widetilde{D}.\label{primera desigualdad}
 \end{equation}
 Let $\widetilde{D}_1$ be the region bounded by:
 \begin{enumerate}
 \item $\partial \widetilde{D}\backslash \widetilde{C}_1$
 \item \label{item2}$\{\tau+\vartheta_1 (0)i:\tau_1\le\tau\le 2\ln L\}$
 \item  \label{item3} $\{\tau+\vartheta_1 (0)i:\ln\frac{r_2}{r_1}-2\ln L\le\tau\le \tau_2\}$
 \item  \label{item4}$\{\tau+\theta i\in \partial \widetilde{R}_0: \theta\ge \vartheta_1 (0)\}$ \footnote{$\widetilde{R}_0$ is given in \ref{rectangulo pequeno}}.
 \end{enumerate}
 Clearly $\widetilde{D}_1\subset\widetilde{D}$ and $\partial \widetilde{D}\backslash \widetilde{C}_1\subset \partial \widetilde{D}_1\cap\partial \widetilde{D}$. Then by Theorem \ref{subordination principle} we have
\begin{equation}
 \omega_{\widetilde{D}_1}(z,\partial \widetilde{D}\backslash \widetilde{C}_1)\le \omega_{\widetilde{D}}(z,\partial \widetilde{D}\backslash \widetilde{C}_1)\mbox{ for all }z\in \widetilde{D}_1.\nonumber
 \end{equation} From this, if $\widetilde{C}_2$ is the subset of $\partial\widetilde{D}_1$ composed by the sets given in items \ref{item2}, \ref{item3} and \ref{item4} above we have that
 \begin{equation}
 \omega_{\widetilde{D}}(z,\widetilde{C}_1)\le
 \omega_{\widetilde{D}_1}(z, \widetilde{C}_2)\mbox{ for all }z\in \widetilde{D}_1.\label{segunda desigualdad}
 \end{equation}
 Let $\widetilde{D}_2$ be the connected component containing $z$ of the complement of $\widetilde{C}_2$ in the interior of $\widetilde{R}$. Clearly we have $\widetilde{D}_1\subset \widetilde{D}_2$ and $\widetilde{C}_2\subset \partial\widetilde{D}_1\cap \partial \widetilde{D}_2$. Then by Theorem \ref{subordination principle} we have
 \begin{equation}
 \omega_{\widetilde{D}_1}(z,\widetilde{C}_2)\le \omega_{\widetilde{D}_2}(z,\widetilde{C}_2)\mbox{ for all }z\in \widetilde{D}_1.\label{tercera desigualdad}
 \end{equation}
 Set $$\widetilde{C}_3=\{\tau+\theta i\in\partial\widetilde{R}: \theta\le\vartheta_1(0)\}$$ and let $\widetilde{D}_3$ be the region bounded by
  \begin{enumerate}
 \item $\widetilde{C}_2$
 \item $\widetilde{C}_3$
 \item $\{\tau+\vartheta_1 (0)i:-2\ln L\le\tau\le \tau_1\}$
 \item  $\{\tau+\vartheta_1 (0)i:\tau_2\le\tau\le\ln\frac{r_2}{r_1}+2\ln L\}$.
 \end{enumerate}
 Again by Theorem \ref{subordination principle} we have
 \begin{equation} \omega_{\widetilde{D}_3}(z,\widetilde{C}_3)\le \omega_{\widetilde{D}_2}(z,\widetilde{C}_3)\mbox{ for all }z\in \widetilde{D}_3,\nonumber
 \end{equation}
 so
\begin{equation} \omega_{\widetilde{D}_2}(z,\partial\widetilde{D}_2\backslash\widetilde{C}_3)\le \omega_{\widetilde{D}_3}(z,\partial\widetilde{D}_3\backslash\widetilde{C}_3)\mbox{ for all }z\in \widetilde{D}_3.\nonumber
 \end{equation} Then, since $\widetilde{C}_2\subset\partial\widetilde{D}_2\backslash\widetilde{C}_3$ we have
 \begin{equation} \omega_{\widetilde{D}_2}(z,\widetilde{C}_2)\le \omega_{\widetilde{D}_3}(z,\partial\widetilde{D}_3\backslash\widetilde{C}_3)\mbox{ for all }z\in \widetilde{D}_3.\label{cuarta desigualdad}
 \end{equation}
 Therefore, if $\widetilde{C}_4= \partial\widetilde{D}_3\backslash\widetilde{C}_3$ from equations \ref{primera desigualdad}, \ref{segunda desigualdad}, \ref{tercera desigualdad} and \ref{cuarta desigualdad} we obtain:
 \begin{equation} \omega_{\widetilde{D}}(z,\widetilde{C})\le \omega_{\widetilde{D}_3}(z,\widetilde{C}_4)\mbox{ for all }z\in \widetilde{D}\cap \widetilde{D}_3.\label{quinta desigualdad}
 \end{equation}
 Fixed $z\in \widetilde{D}\cap \widetilde{D}_3$, let $f:\widetilde{R}\rightarrow\overline{\mathbb{D}}$ be a homeomorphism such that:
 \begin{enumerate}
 \item $f$ maps  $\mathcal{R}=\mbox{int}(\widetilde{R})$  biholomorphically onto $\mathbb{D}$
 \item $f(z)=0$.
\end{enumerate}
Set $$\partial\widetilde{R}^+=\{\tau+\theta i\in\partial\widetilde{R}: \theta\ge 0\}.$$
We know that
 \begin{equation} \omega_{\widetilde{D}_3}(z,\widetilde{C}_4)= \omega_{f(\widetilde{D}_3)}(0,f(\widetilde{C}_4))\label{sexta desigualdad}
 \end{equation} and
 \begin{equation} \omega_{\mathcal{R}}(z,\partial\widetilde{R}^+)= \omega_{\mathbb{D}}(0,f(\partial\widetilde{R}^+)).\label{setima desigualdad}
 \end{equation}
 It is not difficult to see that, if $\ln\frac{r_2}{r_1}$ and $\widetilde{\theta}_0$ are  large, the Jordan curve $f(\partial\widetilde{D}_3)$ is  uniformly close to $\partial\mathbb{D}$ and the segment $f(\widetilde{C}_4)$ is uniformly close to $f(\partial\widetilde{R}^+)$.
 Thus, given $\epsilon>0$ and given a compact set $K\subset \mbox{int}(\widetilde{R}_0)$, it can be proved that we can take
 $\ln\frac{r_2}{r_1}$ and $\widetilde{\theta}_0$ large enough such that
 \footnote{Here we use the good dependence on the boundary of the uniformization  of a Jordan region (see \cite{pommerenke} p.26) and the invariance of  harmonic measures by conformal maps. }:
 \begin{equation} \omega_{f(\widetilde{D}_3)}(0,f(\widetilde{C}_4))\le\omega_{\mathbb{D}}(0,f(\partial\widetilde{R}^+))+\epsilon \nonumber
 \end{equation} for all $z\in K$
  and therefore by \ref{quinta desigualdad}, \ref{sexta desigualdad} and \ref{setima desigualdad}:
 \begin{equation} \omega_{\widetilde{D}}(z,\widetilde{C})\le\omega_{\mathcal{R}}(z,\partial\widetilde{R}^+)+\epsilon \nonumber
 \end{equation} for all $z\in K$.
 By similar arguments we can obtain
  \begin{equation} \omega_{\widetilde{D}}(z,\widetilde{C})\ge\omega_{\mathcal{R}}(z,\partial\widetilde{R}^+)-\epsilon \mbox{ for all }z\in K.\nonumber
 \end{equation}
 Thus, when $\ln\frac{r_2}{r_1}$ and $\widetilde{\theta}_0$ tends to $+\infty$ we have that the function $ z\mapsto\omega_{\widetilde{D}}(z,\widetilde{C})$ converge to the function $ z\mapsto\omega_{\mathcal{R}}(z,\partial\widetilde{R}^+)$  uniformly on the compacts subsets of $\mbox{int}(\widetilde{R}_0)$ and uniformly on $\mathfrak{w}\in\partial V_1$. So, fixed a compact set $K\subset \mbox{int}(\widetilde{R}_0)$, when $\ln\frac{r_2}{r_1}$ and $\widetilde{\theta}_0$ tends to $+\infty$ we have that, uniformly on $\mathfrak{w}\in\partial V_1$, the level set $$\{z\in K:\omega_{\widetilde{D}}(z,\widetilde{C})=\frac{1}{2}\}$$ tends in the $C^\infty$ topology to the level set $$\{z\in K:\omega_{\mathcal{R}}(z,\partial\widetilde{R}^+)=\frac{1}{2}\}=\{\tau+\theta i\in K:\theta=0\}$$ and the proposition follows.

\section{Projective holonomy representation}\label{projective holonomy representation}
Given a non-1-dicritical foliation $\mathcal{F}$, we define the projective holonomy representation of $\mathcal{F}$ as follows. Let $E$ be the exceptional divisor after a blow up and denote also by $\mathcal{F}$ the strict transform of $\mathcal{F}$.
Clearly
 $E^{*}=E\backslash\mathcal{\textrm{Sing$({\mathcal{F}})$ }}$
is a leaf of ${\mathcal{F}}.$ Let $q$ be a point in $E^{*}$
and $\Sigma$ a small complex disc passing through $q$ and transverse
to ${\mathcal{F}}$. For any loop $\gamma$ in $E^{*}$
based on $q$ there is a holonomy map $H_{\mathcal{F}}(\gamma):(\Sigma,q)\mapsto(\Sigma,q)$
which only depends on the homotopy class of $\gamma$ in the fundamental
group $\Gamma=\pi_{1}(E^{*})$. The map $H_{\mathcal{F}}:\Gamma\mapsto\textrm{Diff}(\Sigma,q)$
is known as the projective holonomy representation of $\mathcal{F}$.
Identifying $(\Sigma,q)\simeq(\mathbb{C},0)$ the image of $H_{\mathcal{F}}$
defines up to conjugation a subgroup of $\textrm{Diff$(\mathbb{C},0)$}$
which is known as the holonomy group of $\mathcal{F}$.

The representations $H:\Gamma\mapsto\textrm{Diff}(\mathbb{C},0)$
and $H':\Gamma'\mapsto\textrm{Diff}(\mathbb{C},0)$ are topologically
conjugated if there exist an isomorphism $\varphi:\Gamma\mapsto\Gamma'$
and a germ of homeomorphism $h:(\mathbb{C},0)\mapsto(\mathbb{C},0)$
such that $H'\circ\varphi(\gamma)=h\circ H(\gamma)\circ h^{-1}$ for
all $\gamma\in\Gamma$.

\subsection{Proof of Theorem \ref{second result}}\label{pt2}

 Let $p_1$,...,$p_k$ be the singularities of $\mathcal{F}$ in $E$. Let $S_1$,...,$S_k$ be irreducible separatrices through  $p_1$,...,$p_k$ respectively all different from $E$. Let $\widetilde{S}_j=h(S_j)$ and let $\tilde{p}_j$ be the point where $\widetilde{S}_j$ meets the exceptional divisor $E$.  Let $(t,x)$ be coordinates in $\widehat{\mathbb{C}^2}$ such that  $\pi(t,x)=(x,tx)\in\mathbb{C}^2$. We can assume that the points $p_j$ and $\tilde{p}_j$ are contained in this coordinates, that is, $p_j=(t_j,0)$ and $\tilde{p}_j=(\tilde{t}_j,0)$ with $t_j,\tilde{t}_j\in\mathbb{C}$.   We can perform the constructions of the proof of Theorem \ref{main result} for each $S_j$  to obtain:
\begin{enumerate}
 \item Another topological equivalence $\bar{h}$ between $\mathcal{F}$ and $\widetilde{\mathcal{F}}$
 \item Some constants  $r_j,\tilde{r}_j>0$
 \end{enumerate}
 such that  the following properties hold:
\begin{enumerate}
\item The sets $\{|t-{t}_j|\le r_j\}$   are pairwise disjoint
\item The sets $\{|t-\tilde{t}_j|\le \widetilde{r}_j\}$   are pairwise disjoint and they does not contain singularities of $\widetilde{\mathcal{F}}$ other than the $\tilde{p}_j$
\item For each $j$ and any $w\in\mbox{dom}(\bar{h})$ we have that: $$w\in\{|t-{t}_j|< r_j\}\mbox{ if and only if } \bar{h}(w)\in\{|t-\tilde{t}_j|< \tilde{r}_j\}$$
$$w\in\{|t-{t}_j|> r_j\}\mbox{ if and only if } \bar{h}(w)\in\{|t-\tilde{t}_j|> \tilde{r}_j\}$$
$$w\in\{|t-{t}_j|= r_j\}\mbox{ if and only if } \bar{h}(w)\in\{|t-\tilde{t}_j|= \tilde{r}_j\}$$
\item The Hopf fibers in $\{|t-{t}_j|= r_j\}$ are mapped by $\bar{h}$ into Hopf fibers in $\{|t-\tilde{t}_j|= \tilde{r}_j\}$.

\end{enumerate}
Let $W$ and $\widetilde{W}$ be the complement in $\widehat{\mathbb{C}^2}$ of the sets $$\bigcup_{j=1}^k\{|t-{t}_j|< r_j\}$$ and $$\bigcup_{j=1}^k\{|t-\tilde{t}_j|< \tilde{r}_j\}$$ respectively.
Let $\widetilde{\Sigma}$ be a local Hopf fiber in $\{|t-\tilde{t}_1|= \tilde{r}_1\}$ passing through a point $\tilde{q}\in E$ such that $\bar{h}^{-1}(\widetilde{\Sigma})$ is contained in a local Hopf fiber $\Sigma$ in $\{|t-{t}_1|={r}_1\}$. The local homeomorphism $\bar{h}^{-1}:\widetilde{\Sigma}\rightarrow\Sigma$ will be the homeomorphism associated to the desired projective holonomy conjugacy. In order to construct this projective holonomy conjugacy  we must first define an isomorphism $$\varphi:\pi_1(E\backslash\mbox{Sing}(\widetilde{\mathcal{F}}),\tilde{q})\rightarrow\pi_1(E\backslash\mbox{Sing}(\mathcal{F}),q).$$
As a first step we proceed to define a homomorphism $$\varphi:\pi_1(E\backslash\mbox{Sing}(\widetilde{\mathcal{F}}),\tilde{q})\rightarrow\pi_1(E\backslash\mbox{Sing}(\mathcal{F}),q).$$
In what follows we use the same letter $\rho$ to denote the projections $W\rightarrow E\cap W$ and $\widetilde{W}\rightarrow E\cap\widetilde{W}$ associated to the Hopf fibration.
Let $\tilde{\gamma}$ be a loop in $E\cap \widetilde{W}\backslash\mbox{Sing}(\widetilde{\mathcal{F}})$ based on $\tilde{q}$. Let $\tilde{\gamma}_1\subset\widetilde{W}$ be any loop based on a point $\tilde{q}_1\in\widetilde{\Sigma}\backslash\{\tilde{q}\}$, disjoint of the exceptional divisor $E$  and such that $\tilde{\gamma}_1$ is freely homotopic to $\tilde{\gamma}$ in $\widetilde{W}$. Let $\gamma\subset{{E}\cap W}$ be the loop based at $q=\Sigma\cap E$ given by $\gamma=\rho(\bar{h}^{-1}(\tilde{\gamma}_1))$.\\

\noindent\emph{Assertion 1.  The loop $\gamma$ defines an element in  $\pi_1(E\backslash\mbox{Sing}(\mathcal{F}),q)$ which depends only on the class of $\tilde{\gamma}$ in $\pi_1(E\backslash\mbox{Sing}(\widetilde{\mathcal{F}}),\tilde{q})$. The map  $\tilde{\gamma}\mapsto {\gamma}$ defines a homomorphism $\varphi$ between  $\pi_1(E\backslash\mbox{Sing}(\widetilde{\mathcal{F}}),\tilde{q})$ and $\pi_1(E\backslash\mbox{Sing}(\mathcal{F}),q)$.  Moreover, if $\widetilde{\mathcal{F}}$ does not have singularities other than the $\tilde{p}_j$ we have that $\varphi$ is an isomorphism.}\\

Let $\tilde{\gamma}'$ be another loop in $E\cap \widetilde{W}\backslash\mbox{Sing}(\widetilde{\mathcal{F}})$, contained in the same class of $\tilde{\gamma}$ in $$\pi_1(E\backslash\mbox{Sing}(\widetilde{\mathcal{F}}),\tilde{q}).$$ Then it is easy to see that \begin{equation}\label{ecu10}\tilde{\gamma}'=\tilde{\gamma}\mbox{ in }\pi_1(E\cap \widetilde{W},\tilde{q}).\end{equation} Let $\tilde{\gamma}'_1\subset\widetilde{W}$ be any loop based on a point $\tilde{q}'_1\in\widetilde{\Sigma}\backslash\{\tilde{q}\}$, disjoint of the exceptional divisor $E$  and such that $\tilde{\gamma}'_1$ is freely homotopic to $\tilde{\gamma}'$ in $\widetilde{W}$. We must show that $\gamma':=\rho(\bar{h}^{-1}(\tilde{\gamma}'_1))$ and $\gamma$ define the same class in $\pi_1(E\backslash\mbox{Sing}({\mathcal{F}}),{q})$. If $\tilde{\alpha}$ is a path connecting $\tilde{q}_1$ with $\tilde{q}'_1$ in $\widetilde{\Sigma}\backslash\{\tilde{q}\}$, we have that $\tilde{\alpha}\tilde{\gamma}'_1\tilde{\alpha}^{-1}$ is also freely homotopic to   $\tilde{\gamma}'$  in $\widetilde{W}$. Then, from equation \ref{ecu10} and the definition of $\tilde{\gamma}_1$ we obtain that $\tilde{\alpha}\tilde{\gamma}'_1\tilde{\alpha}^{-1}$ is homotopic with fixed base point to $\tilde{\gamma}_1$  in $\widetilde{W}$, that is  \begin{equation}\label{ecu11}(\tilde{\alpha}\tilde{\gamma}'_1\tilde{\alpha}^{-1})\tilde{\gamma}_1^{-1}=0\mbox{ in }\pi_1(\widetilde{W},\tilde{q}_1).\end{equation} Therefore
 \begin{equation}\label{ecu12}\rho(\tilde{\alpha}\tilde{\gamma}'_1\tilde{\alpha}^{-1}\tilde{\gamma}_1^{-1})=0\mbox{ in }\pi_1(E\cap\widetilde{W},\tilde{q}).\end{equation} Let $\rho^*$ denote the homomorphism $$\pi_1(\widetilde{W}\backslash E,\tilde{q}_1)\rightarrow\pi_1(E\cap\widetilde{W},\tilde{q})$$ induced by $\rho$ and let $\tilde{\sigma}\subset\widetilde{\Sigma}\backslash\{\tilde{q}\}$ be a simple loop based on $\tilde{q}_1$ and  surrounding $\tilde{q}$. By standard algebraic topological arguments we can see that the kernel of $\rho^*$ is generated by the loop $\tilde{\sigma}$. Then, from equation \ref{ecu12} we obtain that
 \begin{equation}\label{ecu13} \tilde{\alpha}\tilde{\gamma}'_1\tilde{\alpha}^{-1}\tilde{\gamma}_1^{-1}=k\sigma\mbox{ in }\pi_1(\widetilde{W}\backslash E,\tilde{q}_1),\mbox{ for some }k\in\mathbb{Z}.\end{equation} Then, since $\bar{h}^{-1}$ maps $\widetilde{W}\backslash E$ into $W\backslash E$:
 \begin{equation}\label{ecu14} \bar{h}^{-1}(\tilde{\alpha}\tilde{\gamma}'_1\tilde{\alpha}^{-1}\tilde{\gamma}_1^{-1})=k\bar{h}^{-1}(\sigma)\mbox{ in }\pi_1({W}\backslash E,{q}_1),\mbox{ where }q_1=\bar{h}^{-1}(\tilde{q}_1)\in\Sigma.\end{equation} From this equation, since $\rho(\bar{h}^{-1}(\tilde{\sigma}))$ is a constant path, we obtain
 \begin{equation}\label{ecu15} \rho(\bar{h}^{-1}(\tilde{\alpha}\tilde{\gamma}'_1\tilde{\alpha}^{-1}\tilde{\gamma}_1^{-1}))=0\mbox{ in }\pi_1(E\cap{W},q).\end{equation} Since  $\rho(\bar{h}^{-1}(\tilde{\alpha}))$ is contained in $\Sigma$ it is easy to see that
 $$\rho(\bar{h}^{-1}(\tilde{\alpha}\tilde{\gamma}'_1\tilde{\alpha}^{-1}))=\rho(\bar{h}^{-1}(\tilde{\gamma}'_1))\mbox{ in }\pi_1(E\cap{W},q).$$
 Then, from equation \ref{ecu15} we obtain
 \begin{equation} \label{ecu16} \rho(\bar{h}^{-1}(\tilde{\gamma}'_1))\rho(\bar{h}^{-1}(\tilde{\gamma}_1^{-1}))=0\mbox{ in }\pi_1(E\cap{W},q),\end{equation}
  hence $\gamma'$  and $\gamma$ define the same class in $\pi_1(E\backslash \mbox{Sing}(\widetilde{\mathcal{F})},q)$ and therefore the map $$\varphi:\pi_1(E\backslash\mbox{Sing}(\widetilde{\mathcal{F}}),\tilde{q})\rightarrow\pi_1(E\backslash\mbox{Sing}(\mathcal{F}),q)$$ is well defined. It is easy to prove that $\varphi$ is a homomorphism. If $\widetilde{\mathcal{F}}$ does not have singularities other than the $\tilde{p}_j$, then we can perform the constructions above in the opposite direction to construct the inverse of $\varphi$, so the proof of Assertion 1 is complete.\\

\noindent\emph{Assertion 2.  Let $\tilde{\gamma}$ be a loop in $E\cap \widetilde{W}\backslash\mbox{Sing}(\widetilde{\mathcal{F}})$ based on $\tilde{q}$ and let $\tilde{f}:(\widetilde{\Sigma},\tilde{q})\rightarrow (\widetilde{\Sigma},\tilde{q})$ be the holonomy associated to $\tilde{\gamma}$. This holonomy map induces  the map $$f=\bar{h}^{-1}\tilde{f}\bar{h}:(\Sigma,q)\rightarrow(\Sigma,q).$$
Then $f$ coincides with the holonomy map associated to $\varphi(\tilde{\gamma})$.}\\

Let $w\in\Sigma\backslash\{q\}$ and consider the path $\alpha=\bar{h}^{-1}(\tilde{\alpha})$, where $\tilde{\alpha}$ is the lift\footnote{by the Hopf fibration} to the leaf through $h(w)$  of the loop $\widetilde{\gamma}$. Clearly the path $\alpha$ is contained in a leaf and connects $w$ with $f(w)$.
We know that the set ${E}\cap W$ can be retracted into a subset  $B$ which is a bouquet of $(k-1)$ circles based on $q$. By using the Hopf Fibration we can "lift" this retraction to the leaves close to the exceptional divisor. Thus, if $w$ is close enough to $q$, the path $\alpha$ is homotopic in the leaf through $w$ to a path  $\beta$ contained in the Hopf fibration restricted to $B$. Clearly the path $\beta$ connects $w$ with $f(w)$. Let $\beta_0$ be the projection of $\beta$ on $B$. Clearly  $\beta_0=\varphi(\tilde{\gamma})$ in $\pi_1(E\backslash\mbox{Sing}(\mathcal{F}),q)$. Let $\gamma$ be a geodesic representant of
$\beta_0$ in $\pi_i(B,q)$. The homotopy between $\beta_0$ and $\gamma$ can be performed in the image of $\beta_0$. Let $\gamma_w$ be the lift of $\gamma$ to the leaf through $w$. Then it is not difficult to see  that $\beta$ is homotopic to $\gamma_w$ by a homotopy contained in the image of $\beta$. Therefore $\beta$ is homotopic in the leaf through $w$ to the path $\gamma_w$ and in particular we have that $f(w)$ is the endpoint of $\gamma_w$. But the endpoint of $\gamma_w$ is just the image of $w$ by the holonomy map associated to $\varphi(\tilde{\gamma})$. Thus Assertion 2 is proved.

In view of  the assertions above, to complete the proof of Theorem \ref{second result} it suffices to show that  $\widetilde{\mathcal{F}}$ does not have singularities other than the $\tilde{p}_j$.
 Suppose that there exists a singularity $\tilde{p}$ of $\widetilde{\mathcal{F}}$ different from all $\tilde{p}_j$. Let $\tilde{\gamma}$ be a loop based on $\tilde{q}$ of type $\tilde{\gamma}=\alpha\epsilon\alpha^{-1}$ where $\epsilon$ is a loop surrounding closely the singularity $\tilde{p}$. It is easy to see in this case  that $\varphi(\tilde{\gamma})=0$ in $\pi_1(E\backslash\mbox{Sing}(\mathcal{F}),q)$. Therefore, it follows from the assertion above  that the holonomy map associated to $\tilde{\gamma}$ is the identity. Let $D\subset E$ be a small disc such that $\overline{D}\cap\mbox{Sing}(\widetilde{\mathcal{F}})=\{\tilde{p}\}$. Then the circle $\partial D$ lift to a circle $\delta$ contained in a leaf close to $\partial D$. The circle $\bar{h}^{-1}(\delta)$ is contained in a leaf $L$ and its projection in ${E}\cap W$ is null-homotopic. As in the proof of the assertion, provided $\delta$ is close enough to $\partial D$ we have that $\bar{h}^{-1}(\delta)$ is null homotopic in $L\cap W$. Then $\bar{h}^{-1}(\delta)$ is the boundary of a simply connected domain  in $L$. Therefore $\delta$ is the boundary of a simply connected domain $\Omega$ in the leaf containing $\delta$. Moreover we have $\Omega\subset\widetilde{W}$, so we have that $\Omega$ is a disc close to $D$ as $\delta$ is close to $\partial D$, but this property is only possible if $\tilde{p}$ is a regular point.

 \begin{rem}As was mentioned in Remark \ref{aaa}, the isomorphism $\varphi$ is in fact a geometric one, that is, $\varphi$ is induced by a homeomorphism.  This is basically a direct application of the main result of  \cite{MM5}.
 \end{rem}

\end{document}